\newtheorem{definition}{Definition}[section]
\newtheorem{theorem}{Theorem}[section]
\newtheorem{lemma}{Lemma}[section]
\newtheorem{corollary}{Corollary}[section]
\newtheorem*{maintheorem*}{Main Theorem}
\numberwithin{equation}{section}
\newcommand{\Set}[1]{\left\{#1\right\}}
\newcommand{\norm}[1]{\left\| #1 \right\|}
\newcommand{\Fed}{F_{\eps,\delta}}
\newcommand{\eps}{\varepsilon}
\newcommand{\ed}{{\eps,\delta}}
\newcommand{\ued}{u_\ed}
\newcommand{\Ped}{P_\ed}
\newcommand{\ue}{u_\eps}
\newcommand{\dk}{\delta_k}
\newcommand{\uedk}{u_{\eps,\dk}}
\newcommand{\Fe}{F_\eps}
\newcommand{\uek}{u_{\eps_k}}
\newcommand{\be}{b_{\eps}}
\newcommand{\Pe}{P_\eps}
\newcommand{\Pek}{P_{\eps_k}}
\newcommand{\Pedk}{P_{\eps,\dk}}
\newcommand{\we}{w_\eps}
\newcommand{\pt}{\partial_t}
\newcommand{\px}{\partial_x }
\newcommand{\pxx}{\partial_{xx}^2}
\newcommand{\pxxx}{\partial_{xxx}^3}
\newcommand{\pxxxx}{\partial_{xxxx}^4}
\newcommand{\ptx}{\partial_{tx}^2}
\renewcommand{\i}{\ifmmode\mathit{\mathchar"7010 }\else\char"10 \fi}
\renewcommand{\j}{\ifmmode\mathit{\mathchar"7011 }\else\char"11 \fi}
\newcommand{\R}{\mathbb{R}}
\newcommand{\N}{\mathbb{N}}
\newcommand{\betas}{\beta_{\eps}}
\newcommand{\psit}{\tilde{\psi}}
\newcommand{\supp}{\mathrm{supp}\,}
\newcommand{\Hneg}{H_{\mathrm{loc}}^{-1}}
\newcommand{\CL}{\mathcal{L}}
\newcommand{\ve}{v_\varepsilon}
\newcommand{\oeps}{\omega_{\eps}}
\newcommand{\Oeps}{\Omega_{\eps}}
\begin{document}\large

\title[The Ostrovsky--Hunter Equation]{Oleinik type estimates \\ for the  Ostrovsky--Hunter Equation}

\date{\today}

\author[G. M. Coclite and L. di Ruvo]{Giuseppe Maria Coclite and Lorenzo di Ruvo}
\address[Giuseppe Maria Coclite and Lorenzo di Ruvo]
{\newline Department of Mathematics,   University of Bari, via E. Orabona 4, 70125 Bari,   Italy}
\email[]{giuseppemaria.coclite@uniba.it, lorenzo.diruvo@uniba.it}
\urladdr{http://www.dm.uniba.it/Members/coclitegm/}

\keywords{Existence, Uniqueness, Stability, Entropy solutions, Conservation Laws,
Ostrovsky-Hunter equation, Cauchy problems, Oleinik estimate.}

\subjclass[2000]{35G25, 35L65, 35L05, 35A05}


\begin{abstract}
The Ostrovsky-Hunter equation provides a model for small-amplitude long waves in
a rotating fluid of finite depth. It is a nonlinear evolution equation. In this paper we study the well-posedness for the Cauchy problem associated to this equation
within a class of bounded discontinuous solutions.
We show that we can replace the Kruzkov-type entropy inequalities by an Oleinik-type estimate and  prove uniqueness
via a nonlocal adjoint problem. An implication is that a shock wave in an entropy weak solution to the
Ostrovsky-Hunter equation is admissible only if it jumps down in value (like the inviscid Burgers equation).
\end{abstract}

\maketitle

\section{Introduction}
\label{sec:intro}

Our aim is to investigate the  well-posedness in classes of discontinuous functions for the equation
\begin{equation}
\label{eq:OH}
\px(\pt u+\px f(u))=\gamma u,\qquad t>0,\>x\in\R.
\end{equation}
We are interested in the Cauchy problem for this equation, so we augment \eqref{eq:OH} with the initial condition
\begin{equation}
\label{eq:init}
u(0,x)=u_0(x), \qquad x\in\R,
\end{equation}
on which we assume that
\begin{equation}
\label{eq:assinit}
u_0\in L^1(\R)\cap L^{\infty}(\R),\quad\int_{\R}u_{0}(x)dx=0.
\end{equation}
On the function
\begin{equation}
\label{eq:def-di-P0}
P_{0}(x)=\int_{-\infty}^{x} u_{0}(y)dy, \quad x\in\R,
\end{equation}
we assume that
\begin{equation}
\label{eq:L-2P0}
\begin{split}
\norm{P_0}^2_{L^2(\R)}&=\int_{\R}\left(\int_{-\infty}^{x}u_{0}(y)dy\right)^2dx <\infty,\\
\int_{\R}P_0(x)dx&= \int_{\R}\left(\int_{-\infty}^{x}u_{0}(y)dy\right)dx=0.
\end{split}
\end{equation}
The flux $f$ is assumed to be smooth, Lipschitz continuous and {\em strictly convex,} namely:
\begin{equation}
\label{eq:assflux1}
f\in C^2(\R),\qquad f''\ge C_{0},\qquad \vert f'(u) \vert \le C_{0} \vert u \vert, \quad u\in\R,
\end{equation}
for some a positive constant $C_0$.

The equation \eqref{eq:OH} is the limit of no high-frequency dispersion ($\beta=0$) of the non-linear evolution equation
\begin{equation}
\label{eq:OHbeta}
\px(\pt u+u\px u-\beta \pxxx u)=\gamma u,
\end{equation}
that was derived by Ostrovsky \cite{O} to model small-amplitude long waves in a rotating fluid of a finite depth.
It generalizes the Korteweg-deVries equation (that corresponds to $\gamma=0$) by the additional term induced by the Coriolis force.
Mathematical properties of the Ostrovsky equation \eqref{eq:OHbeta} were studied recently in many details
 including the local and global well-posedness in energy space
\cite{GP, GL, LM, LPS, LV, T}, stability of solitary waves \cite{LL, L, LV:JDE}, convergence of solutions in the
limit, $\gamma\to0$, of the Korteweg-deVries equation \cite{LL:07, LV:JDE}, and convergence of solutions in the
limit,  $\beta\to0$, of no high-frequency dispersion \cite{Cd2}.
\eqref{eq:OH}  is deduced considering two asymptotic expansions of the shallow water equations, first with respect to the rotation frequency and then with respect to the amplitude of the waves (see \cite{dR, HT}). It is known under different names such as the reduced Ostrovsky equation \cite{P, S}, the
Ostrovsky-Hunter equation \cite{B}, the short-wave equation \cite{H}, and the Vakhnenko equation
\cite{MPV, PV}.

Integrating \eqref{eq:OH} on $(-\infty,x)$ we gain the integro-differential formulation of problem \eqref{eq:OH}, and \eqref{eq:init} (see \cite{LV})
\begin{equation}
\label{eq:OHw-u}
\begin{cases}
\pt u+u\px u =\gamma \int^x_{-\infty} u(t,y) dy,&\qquad t>0, \ x\in\R,\\
u(0,x)=u_0(x), &\qquad x\in\R,
\end{cases}
\end{equation}
that is equivalent to
\begin{equation}
\label{eq:OHw}
\begin{cases}
\pt u+u\px u=\gamma P,&\qquad t>0, \ x\in\R ,\\
\px P=u,&\qquad t>0, \ x>\in\R,\\
u(0,x)=u_0(x), &\qquad x\in\R.
\end{cases}
\end{equation}

One of the main issues in the analysis of \eqref{eq:OH} is that the equation is not preserving the $L^1$ norm, the unique useful conserved quantities are
\begin{equation*}
t\longmapsto\int_\R u(t,x)dx,\qquad t\longmapsto\int_\R u^2(t,x)dx.
\end{equation*}
As a consequence the nonlocal source term $P$ and the solution $u$ are a priori only locally bounded and not summable with respect to x.
A complete analysis of the well-posedness in that framework can be found in \cite{CdK, dR} under the additional condition
\begin{equation*}
P(t,0)=0,
\end{equation*}
that is natural in the reformulation of the boundary value problems for \eqref{eq:OH}, see \cite{Cd, Cd1}.
The equation analyzed in \cite{CdK, dR} is
\begin{equation}
\label{eq:OHw-u-0}
\pt u+u\px u =\gamma \int^x_{0} u(t,y) dy,\qquad t>0, \ x\in\R
\end{equation}
and not the one in \eqref{eq:OHw-u}.
The two reformulations  \eqref{eq:OHw-u} and \eqref{eq:OHw-u-0} of \eqref{eq:OH} are not equivalent.
Therefore, the well-posedness result of \cite{CdK, dR} do not apply also to \eqref{eq:OHw-u}.
Finally, the Kruzkov doubling of variables works for \eqref{eq:OHw-u-0} but does not for \eqref{eq:OHw-u}.

We are interested in the bounded solutions of \eqref{eq:OH} (the ones of \cite{CdK, dR} are only locally bounded).
Indeed, we have \eqref{eq:L-2P0}, that is an assumption on the decay at infinity of the initial condition $u_0$.
The subquadratic assumption \eqref{eq:assflux1} together with \eqref{eq:L-2P0} guarantees the boundedness of the solutions.
Moreover, the convexity of the flux $f$ is necessary for two reasons. It allows us to use a compensated compactness argument for the existence of weak solutions.
In addition it gives an Oleinik type etimate. We will  show that we can replace the Kruzkov-type entropy inequalities used in \cite{CdK, dR}  by
an Oleinik-type estimate and to prove uniqueness via a nonlocal adjoint problem. An implication
is that a shock wave in an entropy weak solution to the Ostrovsky-Hunter equation is admissible
only if it jumps down in value (like the inviscid Burgers equation).

\begin{definition}
\label{def:sol}
We say that $u\in  L^{\infty}((0,T)\times\R)$ is an entropy solution of the initial
value problem \eqref{eq:OH}, and  \eqref{eq:init} if
\begin{itemize}
\item[$i$)] $u$ is a distributional solution of \eqref{eq:OHw-u} or equivalently of \eqref{eq:OHw};
\item[$ii$)] for every convex function $\eta\in  C^2(\R)$ the
entropy inequality
\begin{equation}
\label{eq:OHentropy}
\pt \eta(u)+ \px q(u)-\gamma\eta'(u) P\le 0, \qquad     q(u)=\int^u f'(\xi) \eta'(\xi)\, d\xi,
\end{equation}
holds in the sense of distributions in $(0,\infty)\times\R$.
\end{itemize}
\end{definition}

The main result of this paper is the following theorem.

\begin{theorem}
\label{th:main}
Assume \eqref{eq:assinit}, \eqref{eq:def-di-P0}, \eqref{eq:L-2P0} and \eqref{eq:assflux1}.
The initial value problem \eqref{eq:OH} and \eqref{eq:init}, possesses
an unique entropy solution $u$ in the sense of Definition \ref{def:sol}.\\
Moreover, the following statements are equivalent:
\begin{itemize}
\item[$i$)] $u$ is an entropy solution of \eqref{eq:OHw-u} or \eqref{eq:OHw} in the sense of Definition \eqref{def:sol};
\item[$ii$)]$u$ is a distributional solution of \eqref{eq:OHw-u} or \eqref{eq:OHw} such that for every $T>0$, there exists $C(T)>0$ such that
\begin{equation}
\label{eq:ole1}
\frac{u(t,x)-u(t,y)}{x-y}\le C(T)\left(\frac{1}{t}+1\right),
\end{equation}
for every $0< t <T$, $x\ne y$.
\end{itemize}
\end{theorem}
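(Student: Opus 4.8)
The plan is to build the whole statement on a single vanishing-viscosity approximation and on the one-sided derivative bound it produces. For existence I would regularize \eqref{eq:OHw} by $\pt\ue+\px f(\ue)=\gamma\Pe+\eps\pxx\ue$, $\px\Pe=\ue$, with mollified data. The conserved $L^2$ norm of $\ue$ together with an energy control of $\norm{\Pe(t,\cdot)}_{L^2}$ and the embedding $H^1(\R)\hookrightarrow L^{\infty}(\R)$ bound $\norm{\Pe(t,\cdot)}_{L^{\infty}}$ uniformly on $[0,T]$; integrating $\dot\ue=\gamma\Pe$ along characteristics (a maximum principle) then yields a uniform bound $\norm{\ue(t,\cdot)}_{L^{\infty}}\le M(T)$. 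The strict convexity of $f$ makes the entropy dissipation measures precompact, so a compensated-compactness argument gives a.e. convergence of a subsequence $\ue\to u$ to a distributional solution; it then remains to equip $u$ with the entropy inequalities and the bound \eqref{eq:ole1}.

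The key a priori estimate is the Oleinik bound, obtained by a Riccati comparison. Differentiating the viscous equation in $x$ and writing $w=\px\ue$ gives $\pt w+f'(\ue)\px w+f''(\ue)w^2=\gamma\ue+\eps\pxx w$. Since $f''\ge C_0$ and $\abs{\ue}\le M$, evaluating at a spatial maximum of $w$, where $\px w=0$ and $\pxx w\le0$, produces the differential inequality $\tfrac{d}{dt}\max_x w\le -C_0(\max_x w)^2+\abs{\gamma}M$. Comparing with the Riccati solution $\phi(t)=\sqrt{\abs{\gamma}M/C_0}\,\coth\!\big(\sqrt{C_0\abs{\gamma}M}\,t\big)$, which starts from $+\infty$ at $t=0$ and satisfies $\coth(s)\le 1/s+1$, I obtain $\px\ue\le \tfrac{1}{C_0 t}+\sqrt{\abs{\gamma}M/C_0}$ uniformly in $\eps$. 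This one-sided bound survives the a.e. limit and is exactly \eqref{eq:ole1}; in particular it forces $u(t,\cdot)\in BV_{\loc}(\R)$ for each $t>0$.

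For the equivalence, the implication (ii)$\Rightarrow$(i) is the soft one: \eqref{eq:ole1} with $u\in L^{\infty}$ makes $u(t,\cdot)$ a $BV_{\loc}$ function whose only discontinuities are downward jumps, while $P=\int_{-\infty}^{x}u\,dy$ is Lipschitz in $x$; hence $\gamma\eta'(u)P$ has no singular part and the entropy production in \eqref{eq:OHentropy} is concentrated on the jump set, where a downward jump of a convex flux is exactly Lax/\Kruzkov admissible, giving \eqref{eq:OHentropy} for every convex $\eta\in C^2(\R)$. For (i)$\Rightarrow$(ii) I would reproduce \Oleis convex-flux argument: testing the entropy inequalities and using $f''\ge C_0$ yields the quadratic decay that caps the positive part of $\px u$ by $1/(C_0t)$, the Lipschitz source $\gamma P$ entering only at lower order and accounting for the additive constant in \eqref{eq:ole1}.

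Uniqueness is the crux and the main obstacle. Given two entropy solutions $u,v$ with the same datum—both satisfying \eqref{eq:ole1} by the previous step—set $w=u-v$, $Q=\int_{-\infty}^{x}w\,dy$ and $a=(f(u)-f(v))/(u-v)$, so that $\pt w+\px(aw)=\gamma Q$, $\px Q=w$, and $w(0,\cdot)=0$. Since the nonlocal coupling rules out \Kruzkovs doubling of variables, I would proceed by duality: for each test $\chi$ solve the backward nonlocal adjoint problem $\pt\phi+a\px\phi=\chi-\gamma\Psi$, $\px\Psi=-\phi$, $\phi(T,\cdot)=0$, after which the weak formulation and Fubini give $\dint w\,\chi\,\dtdx=0$, whence $w\equiv0$. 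Convexity and the Oleinik bounds on $u,v$ provide a one-sided Lipschitz bound $\px a\le C(T)(1/t+1)$, which is the correct sign to integrate the backward characteristics without focusing and to construct $\phi$ (by regularizing $a$, solving the smooth nonlocal transport through a Duhamel/fixed-point iteration in the integral unknown $\Psi$, and passing to the limit). The two genuine difficulties are the control of the nonlocal term $\gamma\Psi$, which I would close by a Gronwall estimate on $\norm{\phi}$ and $\norm{\Psi}$, and the non-integrable $1/t$ singularity of the coefficient bound at $t=0$; the latter I would bypass by running the duality on $[\tau,T]$ and letting $\tau\to0$, using that entropy solutions attain $u_0$ strongly in $\Lenloc(\R)$ so that the initial layer contributes nothing to the limit.
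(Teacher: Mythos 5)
Your architecture coincides with the paper's on all three pillars. Existence: same viscous approximation \eqref{eq:OHepsw}, same $L^2$ conservation for $\ue$, the same interpolation-plus-bootstrap bound on $\norm{\Pe}_{L^\infty}$, the same comparison-principle bound $\norm{\ue}_{L^\infty}\le\norm{u_0}_{L^\infty}+C(T)$, and compensated compactness via Murat's lemma. Oleinik bound: your Riccati argument for $w=\px\ue$ is the paper's proof of Theorem \ref{th:olei} verbatim up to the choice of supersolution (the paper uses $Z(t)=1/(ct)+\sqrt{C(T)/c}$ rather than your $\coth$, which is the same bound since $\coth s\le 1/s+1$; the paper's comparison-principle formulation also avoids the attainment issue in your ``evaluate at the spatial maximum'' step on an unbounded domain). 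Uniqueness: your adjoint system $\pt\phi+b\px\phi=\psi-\gamma\Phi$, $\px\Phi=-\phi$, with mollified coefficient $\be$, artificial viscosity $\eps\pxx\phi_\eps$, and the one-sided bound $\px\be\le C(T)(1/t+1)$ driving a Gronwall estimate with logarithmically integrable singularity, is exactly the paper's Section \ref{sec:olei-uni}. Two implementation remarks: your $\tau\to0$ limit at the initial layer is unnecessary --- since the test function $\psi$ is compactly supported in $(0,\infty)\times\R$, the paper simply takes $\supp(\psi)\subset(\tau_0,\tau_1)$ with $\tau_0>0$, and the singular Gronwall factor $(\tau/t)^{C(\tau)}$ stays bounded on the support; and you take solvability of the viscous nonlocal problem for granted, whereas the paper builds it in Section \ref{sec:vv} through the elliptic regularization $-\delta\pxx\Ped+\px\Ped=\ued$ (the same device reappears to solve the approximate adjoint problem).

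The genuine gap is your implication (i)$\Rightarrow$(ii). You propose to ``reproduce Ole\u{i}nik's convex-flux argument: testing the entropy inequalities\dots yields the quadratic decay.'' No such direct testing argument is available: a one-sided pointwise bound on $\px u$ cannot be extracted from the distributional inequalities \eqref{eq:OHentropy} by a choice of test functions, and the classical derivations of the Oleinik estimate proceed either along the viscous approximation --- which controls only the particular viscous-limit solution, not an arbitrary entropy solution --- or via Kru\v{z}kov uniqueness identifying every entropy solution with that limit. The latter route is blocked here: as the paper stresses in the introduction, the doubling of variables fails for \eqref{eq:OHw-u} precisely because of the nonlocal term, which is the raison d'\^etre of the whole adjoint-problem machinery. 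The paper's actual logic is the reverse of yours: it proves the estimate along the viscous family (Theorem \ref{th:olei}), so the constructed entropy solution satisfies \eqref{eq:ole1}, and then uses the adjoint-problem uniqueness among Oleinik-bounded distributional solutions to identify any solution of class (ii) with it --- this gives (ii)$\Rightarrow$(i) at once, and (i)$\Rightarrow$(ii) is obtained by coupling Theorem \ref{th:olei} with the uniqueness of the entropy solution rather than by a direct computation from \eqref{eq:OHentropy}. Your alternative direct proof of (ii)$\Rightarrow$(i) via $BV_{\loc}$ structure and Lax-admissibility of downward jumps is plausible but under-justified as written (it needs space-time $BV$ regularity and trace theory, e.g.\ Vol'pert's calculus, to localize the entropy production on the jump set), and it is in any case redundant once the duality uniqueness is in place; the unproved step you actually rely on is the direct Oleinik-from-entropy derivation, and that is where your proposal would fail.
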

The paper is organized in three sections. In Section \ref{sec:vv}, we prove the wellposednees of the approximate solutions of \eqref{eq:OHw-u}, or \eqref{eq:OHw}. In Section \ref{sec:Es}, we prove the existence of the entropy solutions for \eqref{eq:OHw-u}, or \eqref{eq:OHw}, while in Section \ref{sec:olei-uni}, we prove an Oleinik type estimate and Theorem \ref{th:main}.

\section{Wellposedness of the approximate problem}\label{sec:vv}

To prove the existence of entropy solution for \eqref{eq:OHw-u}, or \eqref{eq:OHw}, we analyze the following mixed problem
\begin{equation}
\label{eq:OHepsw}
\begin{cases}
\pt \ue+\px f(\ue)=\gamma\Pe+ \eps\pxx\ue,&\quad t>0,\ x\in\R,\\
\px\Pe=\ue,&\quad t>0,\ x\in\R,\\
\ue(0,x)=u_{\eps,0}(x),&\quad x\in\R,
\end{cases}
\end{equation}
where $\eps>0$ is a small fixed number.
Clearly, \eqref{eq:OHepsw} is equivalent to the integro-differential problem
\begin{equation}
\label{eq:OHepswint}
\begin{cases}
\pt \ue+\px f(\ue)=\gamma\int_{-\infty}^x \ue (t,y)dy+ \eps\pxx\ue,&\quad t>0,\ x\in\R,\\
\ue(0,x)=u_{\eps,0}(x),&\quad x\in\R.
\end{cases}
\end{equation}
This section is devoted to the wellposedness of \eqref{eq:OHepsw}, or \eqref{eq:OHepswint}.
We assume that
\begin{equation}
\label{eq:assinit1}
u_{\eps,0}\in L^1(\R)\cap L^{\infty}(\R)\cap C^{\infty}(\R), \quad\int_{\R}u_{\eps,0}(x)dx=0.
\end{equation}
while on the function
\begin{equation}
\label{eq:def-di-Peps0}
P_{\eps,0}(x)=\int_{-\infty}^{x} u_{\eps,0}(y)dy, \quad x\in\R,
\end{equation}
we assume that
\begin{equation}
\label{eq:L-2Peps0}
\begin{split}
\norm{P_{\eps,0}}^2_{L^2(\R)}&=\int_{\R}\left(\int_{-\infty}^{x}u_{\eps,0}(y)dy\right)^2dx <\infty,\\
\int_{\R}P_{\eps,0}(x)dx&= \int_{\R}\left(\int_{-\infty}^{x}u_{\eps,0}(y)dy\right)dx=0.
\end{split}
\end{equation}
Fix $0<\delta <1$, and let $\ued=\ued (t,x)$ be the unique
classical solution of the following mixed problem \cite{CHK:ParEll}:
\begin{equation}
\label{eq:OHepsw1}
\begin{cases}
\pt \ued+\px f(\ued)=\gamma\Ped +\eps\pxx\ued,&\quad t>0,\ x\in\R,\\
-\delta\pxx\Ped+\px\Ped=\ued,&\quad t>0,\ x\in\R,\\
\ued(0,x)=u_{\eps,\delta,0}(x),&\quad x\in\R,
\end{cases}
\end{equation}
where $u_{\eps,\delta,0}$ is a $C^\infty$ approximation of $u_{\eps,0}$ such that
\begin{equation}
\label{eq:u0eps}
\begin{split}
&\norm{u_{\eps,\delta,0}}_{L^2(\R)}\le \norm{u_{\eps,0}}_{L^2(\R)}, \quad \norm{u_{\eps,\delta,0}}_{L^{\infty}(\R)}\le \norm{u_{\eps,0}}_{L^{\infty}(\R)},\\
&\eps\norm{\px u_{\eps,\delta,0}}_{L^2(\R)}\le C_{0},\quad   \eps\norm{\pxx u_{\eps,\delta,0}}_{L^2(\R)}\le C_{0}\\
&\norm{P_{\eps,\delta,0}}_{L^2(\R)}\le \norm{P_{\eps,0}}_{L^2(\R)},\quad\delta\norm{\px P_{\eps,\delta,0}}_{L^2(\R)}\le C_{0},
\end{split}
\end{equation}
and $C_0$ is a constant independent on $\delta$, but dependent on $\eps$.

The main result of this section is the following theorem:
\begin{theorem}\label{th:wellp}
Let $T>0$. Assume \eqref{eq:assflux1}, \eqref{eq:assinit1}, \eqref{eq:L-2Peps0} and \eqref{eq:def-di-Peps0}. Then there exist
\begin{equation}
\label{eq:uePe}
\begin{split}
\ue&\in L^{\infty}((0,T)\times\R)\cap C((0,T);H^\ell(\R)),\quad \ell > 2, \\
\Pe&\in L^{\infty}((0,T)\times\R)\cap L^{2}((0,T)\times\R),
\end{split}
\end{equation}
where $\ue$ is a unique classic solution of the Cauchy problem of \eqref{eq:OHepsw}.\\
Moreover, if $\ue$ and $\ve$ are two solutions of \eqref{eq:OHepsw}, the following inequality holds
\begin{equation}
\label{eq:l2-stability}
\norm{\ue(t,\cdot)-\ve(t,\cdot)}_{L^2(\R)}\le e^{C_{\eps}(T)t}\norm{u_{\eps,0}-v_{\eps,0}}_{L^2(\R)},
\end{equation}
for some suitable $C_{\eps}(T)>0$, and every $0\le t\le T$.
\end{theorem}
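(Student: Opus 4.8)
The plan is to realize $\ue$ as the vanishing-$\delta$ limit of the classical solutions $\ued$ of the parabolic--elliptic system \eqref{eq:OHepsw1}, whose solvability is already granted by \cite{CHK:ParEll}. Concretely, I would first establish a priori bounds on $\ued$ and $\Ped$ that are uniform in $\delta$ (but may depend on $\eps$ and $T$), then pass to the limit $\delta\to 0$ by compactness to obtain a solution of \eqref{eq:OHepsw} with the regularity \eqref{eq:uePe}, and finally prove the $L^2$ stability inequality \eqref{eq:l2-stability} by a direct energy computation, which in particular yields uniqueness.

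The core of the argument is a hierarchy of a priori estimates. Multiplying the first equation in \eqref{eq:OHepsw1} by $\ued$ and integrating over $\R$, the convective term $\int_\R\ued\px f(\ued)\dx$ vanishes as a perfect derivative, the viscous term gives $-\eps\norm{\px\ued(t,\cdot)}_{L^2(\R)}^2\le 0$, and the nonlocal source is estimated by $\abs{\gamma}\norm{\ued(t,\cdot)}_{L^2(\R)}\norm{\Ped(t,\cdot)}_{L^2(\R)}$. To close this I would integrate the first equation in \eqref{eq:OHepsw1} in $x$, which (using $-\delta\pxx\Ped+\px\Ped=\ued$) produces an evolution equation for $\Ped$ with lower-order $\delta$-corrections; testing it against $\Ped$ and invoking $\abs{f(u)}\le C_0\abs u$ from \eqref{eq:assflux1} gives a companion differential inequality for $\norm{\Ped(t,\cdot)}_{L^2(\R)}^2$, in which the doubly nonlocal term $\gamma\int_\R\Ped\big(\int_{-\infty}^x\Ped\,dy\big)\dx$ vanishes by antisymmetry. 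Gronwall's lemma applied to this coupled system, together with the hypotheses \eqref{eq:L-2Peps0}, yields uniform $L^2$ control of the pair $(\ued,\Ped)$ on $[0,T]$. Since $\px\Ped$ is then controlled in $L^2$ through the elliptic equation, the embedding $H^1(\R)\hookrightarrow L^\infty(\R)$ bounds $\norm{\Ped(t,\cdot)}_{L^\infty(\R)}$, and a maximum-principle argument for the parabolic equation---at a spatial extremum of $\ued$ one has $\px\ued=0$ and $\eps\pxx\ued$ of favourable sign, so $\tfrac{d}{dt}\norm{\ued(t,\cdot)}_{L^\infty(\R)}\le\abs{\gamma}\norm{\Ped(t,\cdot)}_{L^\infty(\R)}$---delivers the uniform $L^\infty$ bound on $\ued$. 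This step, where the subquadratic flux \eqref{eq:assflux1} and the $L^2$-decay assumption \eqref{eq:L-2Peps0} are essential, is the main obstacle: because \eqref{eq:OH} does not preserve the $L^1$ norm, $\Ped$ is controlled only through the delicate simultaneous coupling above rather than one estimate at a time.

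Differentiating the equation and running the usual parabolic energy estimates then bounds $\ued$ in $C((0,T);H^\ell(\R))$ for some $\ell>2$, uniformly in $\delta$. The Aubin--Lions lemma provides a subsequence along which $\ued\to\ue$ strongly in $L^2_{\loc}$ and $\Ped\rightharpoonup\Pe$; the $\delta$-dependent terms vanish in the limit, so $(\ue,\Pe)$ solves \eqref{eq:OHepsw}, and the bounds \eqref{eq:uePe} pass to the limit by lower semicontinuity and parabolic smoothing.

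Finally, for \eqref{eq:l2-stability}, let $\ue,\ve$ be two solutions, set $w=\ue-\ve$, and let $Q$ denote the difference of the associated potentials, so that $\px Q=w$ and $Q\in L^2(\R)$. Subtracting the equations and testing against $w$ gives
\begin{equation*}
\frac12\frac{d}{dt}\norm{w(t,\cdot)}_{L^2(\R)}^2=-\int_\R w\,\px\big(f(\ue)-f(\ve)\big)\dx+\gamma\int_\R Qw\dx-\eps\norm{\px w(t,\cdot)}_{L^2(\R)}^2.
\end{equation*}
Writing $f(\ue)-f(\ve)=a\,w$ with $a=\int_0^1 f'(\ve+sw)\,ds$, which is bounded together with $\px a$ thanks to the $H^\ell$ bounds and \eqref{eq:assflux1}, the convective term is dominated by $\tfrac12\norm{\px a}_{L^\infty(\R)}\norm{w(t,\cdot)}_{L^2(\R)}^2$, whereas the nonlocal term $\gamma\int_\R Q\px Q\dx$ vanishes after integration by parts since $Q\in L^2(\R)$. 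Dropping the nonpositive viscous term and applying Gronwall's inequality yields \eqref{eq:l2-stability} with $C_\eps(T)$ governed by $\norm{\px a}_{L^\infty(\R)}$; taking $u_{\eps,0}=v_{\eps,0}$ gives uniqueness.
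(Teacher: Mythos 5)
Your overall architecture is exactly the paper's (vanishing-$\delta$ limit of the parabolic--elliptic regularization \eqref{eq:OHepsw1}, $\delta$-uniform a priori bounds, compactness, then an energy stability estimate), but the central coupled $L^2$ estimate contains a genuine error: you invoke ``$\abs{f(u)}\le C_0\abs{u}$ from \eqref{eq:assflux1}'', whereas \eqref{eq:assflux1} bounds the \emph{derivative}, $\abs{f'(u)}\le C_0\abs{u}$, so $f$ is only subquadratic: $\abs{f(u)}\le \abs{f(0)}+\tfrac{C_0}{2}u^2$. With the correct bound, the flux term in the $\Ped$-energy identity is controlled only as $2\left\vert\int_\R f(\ued)\Ped\,dx\right\vert\le 2C_0\int_\R\abs{\Ped}\,\ued^2\,dx\le C\,\norm{\Ped(t,\cdot)}_{L^\infty(\R)}\norm{\ued(t,\cdot)}^2_{L^2(\R)}$, so your ``companion differential inequality for $\norm{\Ped(t,\cdot)}^2_{L^2(\R)}$'' involves the unknown sup norm of $\Ped$ and the coupled Gronwall argument does not close. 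This is precisely the difficulty the paper resolves by a bootstrap (Lemma \ref{lm:P-infty}): Gronwall first yields $\norm{\Ped(t,\cdot)}^2_{L^2(\R)}+\delta^2\norm{\px\Ped(t,\cdot)}^2_{L^2(\R)}\le C(T)\left(1+\norm{\Ped}_{L^\infty(I_{T,1})}\right)$, and then the interpolation $\Ped^2\le 2\norm{\Ped(t,\cdot)}_{L^2(\R)}\norm{\px\Ped(t,\cdot)}_{L^2(\R)}$ converts this into a polynomial inequality for $\norm{\Ped}_{L^\infty(I_{T,1})}$ whose admissible roots are bounded. You would need this bootstrap (or a Gagliardo--Nirenberg substitute exploiting the $\eps$-weighted bound on $\px\ued$, since constants may depend on $\eps$ here) to rescue exactly the step you yourself flag as ``the main obstacle''.

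A second gap: the claim that $\gamma\int_\R\Ped\left(\int_{-\infty}^x\Ped\,dy\right)dx$ ``vanishes by antisymmetry'' is false as stated. Symmetrizing the double integral gives exactly $\tfrac{\gamma}{2}\left(\int_\R\Ped(t,x)\,dx\right)^2$, which for $\gamma>0$ is a nonnegative term of the unfavourable sign, not zero; it vanishes if and only if $\Ped(t,\cdot)$ has zero mean. At the $\delta$-level this is nontrivial: the regularized constraint is $-\delta\pxx\Ped+\px\Ped=\ued$, so $\Ped$ is not the primitive of $\ued$ and zero mean is not inherited from $\int_\R\ued\,dx=0$. The paper devotes Lemma \ref{lm:p8} to it, first proving that $\Ped$ is integrable at $\pm\infty$ and then computing the two tail integrals $a_{\eps,\delta}(t)$, which cancel. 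Your stability argument, by contrast, is essentially the paper's: there the vanishing of $\gamma\int_\R Q\,\px Q\,dx$ is deduced from the conservation $\int_\R\ue(t,x)\,dx=0$ (Lemma \ref{lm:u-null}), while your alternative route ($Q(t,\cdot)\in L^2(\R)$ by \eqref{eq:uePe} and $\px Q=w\in L^2(\R)$, hence $Q(t,\cdot)\in H^1(\R)$ and $Q\to 0$ at $\pm\infty$) is acceptable; likewise your maximum-principle $L^\infty$ bound matches the paper's comparison argument, and Aubin--Lions is a workable substitute for the paper's compensated compactness since $\eps$ is fixed. But until the two gaps above are filled, the existence half of the theorem is not proved.
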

We begin by proving some a priori estimates on $\ued$ and $\Ped$, denoting with $C_0$ the constants which depend on the initial data, and $C(T)$ the constants which depend also on $T$.

\begin{lemma}
\label{lm:cns1}
For each $t\in (0,\infty)$,
\begin{equation}
\label{eq:P-pxP-intfy1}
\Ped(t,\infty)=\px\Ped(t,-\infty)=\px\Pe(t,\infty)=0.
\end{equation}
Moreover,
\begin{equation}
\label{eq:equ-L2-stima1}
\delta^2\norm{\pxx\Ped(t,\cdot)}^2_{L^2(\R)}+ \norm{\px\Ped(t,\cdot)}^2_{L^2(\R)}=\norm{\ued(t,\cdot)}^2_{L^2(\R)}.
\end{equation}
\end{lemma}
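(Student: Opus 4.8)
The plan is to treat the two assertions separately: the vanishing of the boundary values at infinity, where essentially all the work lies, and the $L^2$ identity \eqref{eq:equ-L2-stima1}, which will then drop out from a one-line integration by parts. Throughout I would use that $\ued(t,\cdot)$ and $\ue(t,\cdot)$ are classical solutions lying in $H^\ell(\R)$ with $\ell>2$, so by the embedding $H^1(\R)\hookrightarrow C_0(\R)$ the functions $\ued,\px\ued,\pxx\ued$ (and $\ue$) are continuous and tend to $0$ as $x\to\pm\infty$. In particular $\px\Pe(t,\infty)=\ue(t,\infty)=0$ is immediate.

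For $\Ped$ I would exploit the structure of the elliptic relation $-\delta\pxx\Ped+\px\Ped=\ued$, which factors as $\px(\Ped-\delta\px\Ped)=\ued$. Setting $w:=\px\Ped$, this gives the first-order linear ODE $-\delta w'+w=\ued$ in $x$, whose bounded solution admits the representation
\[
\px\Ped(t,x)=w(x)=\frac1\delta\int_0^\infty \ued(t,x+s)\,e^{-s/\delta}\,ds .
\]
Since $\ued(t,x+s)\to0$ pointwise as $x\to\pm\infty$ and the integrand is dominated by $\norm{\ued(t,\cdot)}_{L^\infty(\R)}\,e^{-s/\delta}$, dominated convergence yields $\px\Ped(t,x)\to0$ as $x\to\pm\infty$; in particular $\px\Ped(t,-\infty)=0$. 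Integrating $\px(\Ped-\delta\px\Ped)=\ued$ over $\R$ and using the conserved vanishing total mass $\int_\R\ued\,dx=0$ together with $\px\Ped\to0$ shows that $\Ped(t,\cdot)-\delta\px\Ped(t,\cdot)$ has equal limits at $\pm\infty$, hence $\Ped(t,\cdot)$ possesses limits there; as $\Ped(t,\cdot)\in L^2(\R)$, these limits must vanish, giving $\Ped(t,\infty)=0$.

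With $\px\Ped(t,\pm\infty)=0$ established, the identity \eqref{eq:equ-L2-stima1} is quick: I would square the pointwise relation $\ued=-\delta\pxx\Ped+\px\Ped$ and integrate in $x$,
\[
\norm{\ued(t,\cdot)}_{L^2(\R)}^2=\int_\R\Big(\delta^2(\pxx\Ped)^2-2\delta\,\pxx\Ped\,\px\Ped+(\px\Ped)^2\Big)\,dx,
\]
and observe that the cross term is a perfect derivative, $-2\delta\,\pxx\Ped\,\px\Ped=-\delta\,\px\big((\px\Ped)^2\big)$, so it integrates to $-\delta\big[(\px\Ped)^2\big]_{-\infty}^{\infty}=0$ by the boundary behaviour just obtained, leaving exactly $\delta^2\norm{\pxx\Ped(t,\cdot)}_{L^2(\R)}^2+\norm{\px\Ped(t,\cdot)}_{L^2(\R)}^2$. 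The main obstacle is the rigorous justification of the decay of the \emph{nonlocal} quantity $\Ped$ and of the integrability required to run these steps: one must know a priori that $\Ped(t,\cdot)$ and its derivatives lie in $L^2(\R)$ (so the integrals are finite and the boundary terms genuinely vanish) and that $\int_\R\ued\,dx=0$ holds for all $t$. The former I would draw from elliptic regularity for $-\delta\pxx+\px$ applied to $\ued\in H^\ell(\R)$, combined with the $L^2$-data normalization \eqref{eq:L-2Peps0}; the latter follows by integrating the evolution equation in \eqref{eq:OHepsw1} over $\R$, which reduces conservation of the vanishing mass to the consistency of the boundary conditions derived above.
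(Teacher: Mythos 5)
Your treatment of $\px\Ped$ and of the identity \eqref{eq:equ-L2-stima1} is sound: the kernel representation $\px\Ped(t,x)=\frac{1}{\delta}\int_0^\infty e^{-s/\delta}\ued(t,x+s)\,ds$ is indeed the bounded solution of $-\delta w'+w=\ued$, and dominated convergence gives $\px\Ped(t,\pm\infty)=0$; this is a legitimate, slightly more self-contained alternative to the paper, which instead differentiates the parabolic equation in $x$ and sends $x\to\pm\infty$ there. (Incidentally, the third equality in \eqref{eq:P-pxP-intfy1} should read $\px\Ped(t,\infty)=0$ --- a typo in the statement which you worked around by reading it literally as $\px\Pe=\ue$ at $+\infty$.) Your squaring and integration-by-parts step for \eqref{eq:equ-L2-stima1} coincides exactly with the paper's.

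The genuine gap is in your proof of $\Ped(t,\infty)=0$, i.e.\ in pinning the additive constant: the elliptic equation determines $\Ped$ only modulo the kernel of $-\delta\pxx+\px$ (constants and $e^{x/\delta}$), so nothing intrinsic to that equation can fix the limit, and both of your substitutes fail at this stage. First, ``elliptic regularity'' does not give $\Ped(t,\cdot)\in L^2(\R)$ from $\ued(t,\cdot)\in H^\ell(\R)$: the symbol $\delta\xi^2+i\xi$ degenerates at $\xi=0$, low frequencies are exactly the obstruction, and in the paper the bound $\norm{\Ped(t,\cdot)}_{L^2(\R)}\le C(T)$ is only obtained in Lemma \ref{lm:P-infty}, whose Gronwall argument already uses the boundary relations \eqref{eq:P-pxP-intfy1} you are trying to prove. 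Second, $\int_\R\ued(t,x)\,dx=0$ for $t>0$ does not follow ``by integrating the evolution equation over $\R$'': that integration gives $\frac{d}{dt}\int_\R\ued\,dx=\gamma\int_\R\Ped\,dx$ (the flux and viscous contributions are the only boundary terms and they cancel), and the finiteness and vanishing of $\int_\R\Ped\,dx$ is precisely Lemma \ref{lm:p8}, proved after --- and by means of --- the present lemma; even $\ued(t,\cdot)\in L^1(\R)$ at positive times, which your integration of $\px\left(\Ped-\delta\px\Ped\right)=\ued$ over $\R$ requires, is never established (the paper works only with $L^2\cap L^\infty$ bounds). So the argument is circular on both counts. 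The missing idea is the paper's one-line use of the parabolic--elliptic coupling: since $\ued$ is a classical solution with $\pt\ued$, $\px f(\ued)$ and $\pxx\ued$ vanishing as $x\to+\infty$, sending $x\to\infty$ directly in the first equation of \eqref{eq:OHepsw1} forces $\gamma\Ped(t,\infty)=0$; it is the evolution equation, not the elliptic one, that normalizes $\Ped$.
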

\begin{proof}
We begin by proving that \eqref{eq:P-pxP-intfy1} holds true.

Differentiating the first equation of \eqref{eq:OHepsw1} with respect to $x$, we have
\begin{equation}
\label{eq:pxu}
\px(\pt \ued+\px f(\ued)-\eps\pxx\ued)=\gamma\px\Ped.
\end{equation}
For the the smoothness of $\ued$, it follows from \eqref{eq:OHepsw1} and \eqref{eq:pxu} that
\begin{align*}
&\lim_{x\to\infty}(\pt \ued +\px f(\ued)-\eps\pxx\ued)=\gamma\Ped(t,\infty)=0,\\
&\lim_{x\to -\infty}\px(\pt \ued+\px f(\ued)-\eps\pxx\ued)=\gamma\px\Ped(t,-\infty)=0,\\
&\lim_{x\to\infty}\px(\pt \ued+ \px f(\ued)-\eps\pxx\ued)=\gamma\px\Ped(t,\infty)=0,
\end{align*}
which gives \eqref{eq:P-pxP-intfy1}.

Let us show that \eqref{eq:equ-L2-stima1} holds true.
Squaring the equation for $\Pe$ in \eqref{eq:OHepsw1}, we get
\begin{equation*}
\delta^2(\pxx\Ped)^2+(\px\Ped)^2 - \delta\px((\px\Ped)^2)=\ued^2.
\end{equation*}
Therefore, \eqref{eq:equ-L2-stima1} follows from  \eqref{eq:P-pxP-intfy1} and an integration on $\R$.
\end{proof}

\begin{lemma}
\label{lm:2}
For each $t\in(0,\infty)$,
\begin{align}
\label{eq:L-infty-Px}
\sqrt{\delta}\norm{\px\Ped(t, \cdot)}_{L^{\infty}(\R)}&\le \norm{\ued(t,\cdot)}_{L^2(\R)},\\
\label{eq:uP}
\int_{\R}\ued(t,x)\Ped(t,x) dx&\le \norm{\ued(t,\cdot)}^2_{L^2(\R)}.
\end{align}
\end{lemma}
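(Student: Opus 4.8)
The plan is to prove the two inequalities \eqref{eq:L-infty-Px} and \eqref{eq:uP} using the regularized elliptic equation $-\delta\pxx\Ped+\px\Ped=\ued$ from \eqref{eq:OHepsw1} together with the $L^2$ identity \eqref{eq:equ-L2-stima1} already established in Lemma \ref{lm:cns1}. Both estimates should follow from elementary integration by parts and Cauchy--Schwarz, with the decay conditions \eqref{eq:P-pxP-intfy1} supplying the vanishing boundary terms at $\pm\infty$. The main subtlety will be to handle the first-order transport term $\px\Ped$ and the second-order term $\pxx\Ped$ simultaneously while keeping the correct powers of $\delta$.

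For \eqref{eq:L-infty-Px}, first I would aim to bound the $L^\infty$ norm of $\px\Ped$ by controlling its derivative in $L^1$, or more directly by writing $(\px\Ped)^2$ as an integral of its own derivative. Specifically, since $\px\Ped(t,-\infty)=0$ by \eqref{eq:P-pxP-intfy1}, I would write
\begin{equation*}
(\px\Ped(t,x))^2=2\int_{-\infty}^x \px\Ped(t,y)\pxx\Ped(t,y)\,dy\le 2\norm{\px\Ped(t,\cdot)}_{L^2(\R)}\norm{\pxx\Ped(t,\cdot)}_{L^2(\R)}
\end{equation*}
by Cauchy--Schwarz. From \eqref{eq:equ-L2-stima1} one has both $\norm{\px\Ped}_{L^2(\R)}\le\norm{\ued}_{L^2(\R)}$ and $\delta\norm{\pxx\Ped}_{L^2(\R)}\le\norm{\ued}_{L^2(\R)}$, so the product is bounded by $\delta^{-1}\norm{\ued}_{L^2(\R)}^2$. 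Taking the supremum over $x$ and the square root yields $\sqrt{\delta}\norm{\px\Ped}_{L^\infty(\R)}\le\sqrt{2}\norm{\ued}_{L^2(\R)}$; recovering the clean constant $1$ claimed in \eqref{eq:L-infty-Px} will likely require a slightly sharper argument, perhaps exploiting that $\px\Ped(t,\infty)=0$ as well and splitting the integral, or invoking the interpolation-type bound $\norm{w}_{L^\infty}^2\le\norm{w}_{L^2}\norm{\px w}_{L^2}$ without the factor $2$ on the half-line.

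For \eqref{eq:uP}, I would substitute $\ued=-\delta\pxx\Ped+\px\Ped$ into the integral $\int_\R\ued\Ped\,dx$. The term $\int_\R\px\Ped\,\Ped\,dx=\tfrac12\int_\R\px((\Ped)^2)\,dx$ vanishes by the decay at $\pm\infty$, while $-\delta\int_\R\pxx\Ped\,\Ped\,dx=\delta\int_\R(\px\Ped)^2\,dx$ after one integration by parts, again using \eqref{eq:P-pxP-intfy1} to kill the boundary contribution. Thus $\int_\R\ued\Ped\,dx=\delta\norm{\px\Ped}_{L^2(\R)}^2$, which by \eqref{eq:equ-L2-stima1} is bounded by $\delta\norm{\ued}_{L^2(\R)}^2\le\norm{\ued}_{L^2(\R)}^2$ since $0<\delta<1$. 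I expect the bookkeeping of boundary terms to be the only place requiring care, since the statement presumes enough decay of $\Ped$ and its derivatives, which must be justified from the regularity asserted in \eqref{eq:OHepsw1} and the conclusions of Lemma \ref{lm:cns1}.
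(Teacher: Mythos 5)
Your proof of \eqref{eq:uP} is exactly the paper's: multiplying the elliptic equation in \eqref{eq:OHepsw1} by $\Ped$ (equivalently, substituting $\ued=-\delta\pxx\Ped+\px\Ped$), killing $\int_\R\Ped\px\Ped\,dx$ by the boundary behaviour, integrating by parts to get $\int_\R\ued\Ped\,dx=\delta\norm{\px\Ped(t,\cdot)}_{L^2(\R)}^2$, and concluding with \eqref{eq:equ-L2-stima1} and $0<\delta<1$. For \eqref{eq:L-infty-Px} your route differs only in \emph{where} Young's inequality is applied, and the factor $\sqrt{2}$ you worry about disappears with no extra machinery: starting from your Cauchy--Schwarz bound, apply the $\delta$-weighted Young inequality to the product of norms,
\begin{align*}
2\norm{\px\Ped(t,\cdot)}_{L^2(\R)}\norm{\pxx\Ped(t,\cdot)}_{L^2(\R)}
&\le\frac{1}{\delta}\left(\norm{\px\Ped(t,\cdot)}_{L^2(\R)}^2+\delta^2\norm{\pxx\Ped(t,\cdot)}_{L^2(\R)}^2\right)\\
&=\frac{1}{\delta}\norm{\ued(t,\cdot)}_{L^2(\R)}^2,
\end{align*}
where the last step is the \emph{identity} \eqref{eq:equ-L2-stima1}; since that identity controls exactly the weighted sum of squares, nothing is lost and the constant $1$ comes out directly. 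Do not bound the two norms separately by $\norm{\ued}_{L^2(\R)}$ and $\delta^{-1}\norm{\ued}_{L^2(\R)}$ as you first did --- that is where the $\sqrt{2}$ creeps in. This weighted Young step is precisely what the paper does, only pointwise rather than at the level of norms: it expands $0\le(-\delta\pxx\Ped+\px\Ped)^2$ to get $\delta\px\left((\px\Ped)^2\right)\le\delta^2(\pxx\Ped)^2+(\px\Ped)^2$, integrates over $(-\infty,x)$ using $\px\Ped(t,-\infty)=0$ from \eqref{eq:P-pxP-intfy1}, and then invokes \eqref{eq:equ-L2-stima1}. Your alternative fix --- splitting the integral using $\px\Ped(t,\infty)=0$ as well, i.e.\ the factor-free interpolation inequality for functions vanishing at both ends --- also yields constant $1$, so either repair is legitimate. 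One bookkeeping point you correctly flag: discarding $\int_\R\px(\Ped^2)\,dx$ in the proof of \eqref{eq:uP} requires the limit of $\Ped$ at $-\infty$ as well, whereas \eqref{eq:P-pxP-intfy1} only records $\Ped(t,\infty)=0$; here you, like the paper, must rely on the decay of the classical solution of \eqref{eq:OHepsw1} supplied by the parabolic--elliptic theory cited there, so your caveat matches the paper's implicit handling.
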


\begin{proof}
We begin by proving that \eqref{eq:L-infty-Px} holds true.\\
Observe that
\begin{equation*}
0\le (-\delta \pxx\Pe + \px\Pe)^2= \delta^2(\pxx\Pe)^2 +(\px\Pe)^2 - \delta\px((\px\Pe)^2),
\end{equation*}
that is,
\begin{equation}
\label{eq:equa-pxP}
\delta\px((\px\Ped)^2)\le \delta^2(\pxx\Ped)^2 +(\px\Ped)^2.
\end{equation}
Integrating \eqref{eq:equa-pxP} on $(-\infty, x)$, we have
\begin{equation}
\label{eq:equa-pxP1}
\begin{split}
\delta(\px\Ped)^2  &\le \delta^2\int_{-\infty}^{x}(\pxx\Ped)^2 dx +\int_{-\infty}^{x}(\px\Ped)^2 dx\\
&\le \delta^2\int_{\R}(\pxx\Ped)^2 dx +\int_{\R}(\px\Ped)^2 dx.
\end{split}
\end{equation}
It follows from \eqref{eq:equ-L2-stima1} and \eqref{eq:equa-pxP1} that
\begin{equation*}
\delta(\px\Ped)^2\le \delta^2\int_{\R}(\pxx\Ped)^2 dx +\int_{\R}(\px\Ped)^2 dx= \norm{\ued(t,\cdot)}^2_{L^2(\R)}.
\end{equation*}
Therefore,
\begin{equation*}
\sqrt{\delta}\vert \px\Ped(t,x)\vert \le \norm{\ued(t,\cdot)}_{L^2(\R)},
\end{equation*}
which gives \eqref{eq:L-infty-Px}.

Finally, we prove \eqref{eq:uP}.
Multiplying by $\Ped$ the equation for $\Ped$ in \eqref{eq:OHepsw1}, we get
\begin{equation*}
-\delta\Ped\pxx\Ped + \Ped\px\Ped= \ued\Ped.
\end{equation*}
An integration on $\R$ and \eqref{eq:P-pxP-intfy1} give
\begin{align*}
\int_{\R}\ued\Ped dx=&\frac{1}{2}\int_{\R}\px(\Pe)^2 dx - \delta \int_{\R}\Ped\pxx\Ped dx\\
=&-\delta\int_{\R}\Ped\pxx\Ped dx=\delta\int_{\R}(\px\Ped)^2 dx,
\end{align*}
that is
\begin{equation*}
\int_{\R}\ued\Ped dx = \delta\int_{\R}(\px\Ped)^2 dx.
\end{equation*}
Since $0<\delta <1$, for \eqref{eq:equ-L2-stima1}, we have \eqref{eq:uP}.
\end{proof}

\begin{lemma}
\label{lm:l2-u}
For each $t\in(0,\infty)$, the following inequality holds
\begin{equation}
\label{eq:l2-u}
\norm{\ued(t,\cdot)}_{L^2(\R)}^2+2\eps e^{2\gamma t}
\int_0^t e^{-2\gamma s}\norm{\px \ued(s,\cdot)}^2_{L^2(\R)}ds\le e^{2\gamma t}\norm{u_{\eps,0}}^2_{L^2(\R)}.
\end{equation}
In particular, we have
\begin{equation}
\label{eq:h2-P}
\norm{\px \Ped(t,\cdot)}_{L^2(\R)},\,\delta \norm{\pxx \Ped(t,\cdot)}_{L^2(\R)},\,\sqrt{\delta}\norm{\px\Ped(t,\cdot)}_{L^\infty(\R)}\le  e^{\gamma t}\norm{u_{\eps,0}}_{L^2(\R)},
\end{equation}
\end{lemma}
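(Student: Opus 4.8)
The plan is to run the standard parabolic energy estimate on the first equation of \eqref{eq:OHepsw1}, using the already-established control \eqref{eq:uP} of the nonlocal coupling term. Since this lemma is part of the construction underlying Theorem \ref{th:wellp}, I may work with the smooth, spatially decaying classical solution $(\ued,\Ped)$ of \eqref{eq:OHepsw1}, so that all the integrations by parts below are legitimate and the boundary contributions at $x=\pm\infty$ vanish. First I would multiply the first equation of \eqref{eq:OHepsw1} by $\ued$ and integrate over $\R$, obtaining
\[
\frac12\frac{d}{dt}\norm{\ued(t,\cdot)}_{L^2(\R)}^2+\int_\R \ued\px f(\ued)\dx = \gamma\int_\R \ued\Ped\dx+\eps\int_\R\ued\pxx\ued\dx.
\]

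Next I would dispose of the two local terms. The flux term is a perfect derivative: writing $G(u)=\int_0^u \xi f'(\xi)\,d\xi$ one has $\ued\px f(\ued)=\px G(\ued)$, and since $\ued\to0$ at spatial infinity with $G(0)=0$, it follows that $\int_\R\ued\px f(\ued)\dx=0$. The viscous term integrates by parts to $\eps\int_\R\ued\pxx\ued\dx=-\eps\norm{\px\ued(t,\cdot)}_{L^2(\R)}^2$. For the nonlocal source I would invoke \eqref{eq:uP} of Lemma \ref{lm:2}, which gives $\int_\R\ued\Ped\dx\le\norm{\ued(t,\cdot)}_{L^2(\R)}^2$; feeding these three facts into the identity above yields the differential inequality
\[
\frac{d}{dt}\norm{\ued(t,\cdot)}_{L^2(\R)}^2+2\eps\norm{\px\ued(t,\cdot)}_{L^2(\R)}^2\le 2\gamma\norm{\ued(t,\cdot)}_{L^2(\R)}^2.
\]

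Then I would integrate after multiplying by the integrating factor $e^{-2\gamma t}$, which turns the left-hand side into $\frac{d}{dt}\bigl(e^{-2\gamma t}\norm{\ued(t,\cdot)}_{L^2(\R)}^2\bigr)+2\eps e^{-2\gamma t}\norm{\px\ued(t,\cdot)}_{L^2(\R)}^2\le0$. Integrating this on $(0,t)$ and multiplying back by $e^{2\gamma t}$ gives
\[
\norm{\ued(t,\cdot)}_{L^2(\R)}^2+2\eps e^{2\gamma t}\int_0^t e^{-2\gamma s}\norm{\px\ued(s,\cdot)}_{L^2(\R)}^2\,ds\le e^{2\gamma t}\norm{\ued(0,\cdot)}_{L^2(\R)}^2.
\]
Since $\ued(0,\cdot)=u_{\eps,\delta,0}$ and $\norm{u_{\eps,\delta,0}}_{L^2(\R)}\le\norm{u_{\eps,0}}_{L^2(\R)}$ by \eqref{eq:u0eps}, this is precisely \eqref{eq:l2-u}.

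Finally, \eqref{eq:h2-P} would follow by combining \eqref{eq:l2-u} with the two earlier lemmas. Dropping the nonnegative viscous integral in \eqref{eq:l2-u} gives $\norm{\ued(t,\cdot)}_{L^2(\R)}\le e^{\gamma t}\norm{u_{\eps,0}}_{L^2(\R)}$; inserting this into the identity \eqref{eq:equ-L2-stima1} of Lemma \ref{lm:cns1} bounds both $\norm{\px\Ped(t,\cdot)}_{L^2(\R)}$ and $\delta\norm{\pxx\Ped(t,\cdot)}_{L^2(\R)}$ by $e^{\gamma t}\norm{u_{\eps,0}}_{L^2(\R)}$, while inserting it into \eqref{eq:L-infty-Px} of Lemma \ref{lm:2} yields the stated bound on $\sqrt{\delta}\norm{\px\Ped(t,\cdot)}_{L^\infty(\R)}$. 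I expect the only genuinely delicate point to be the justification that the solution decays fast enough at $x=\pm\infty$ for the boundary terms in the flux and viscosity integrations by parts to vanish; once the regularity and decay of the classical solution of \eqref{eq:OHepsw1} are invoked, the rest is a routine Gronwall argument, with the sign of $\gamma$ entering only through the choice of integrating factor and the use of the one-sided bound \eqref{eq:uP}.
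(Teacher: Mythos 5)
Your proposal is correct and follows essentially the same route as the paper's proof: the paper likewise multiplies the first equation of \eqref{eq:OHepsw1} by $\ued$, uses \eqref{eq:uP} to control the nonlocal term and integration by parts for the viscous term, and concludes \eqref{eq:l2-u} via the Gronwall Lemma and \eqref{eq:u0eps}, after which \eqref{eq:h2-P} is read off from \eqref{eq:equ-L2-stima1}, \eqref{eq:L-infty-Px} and \eqref{eq:l2-u} exactly as you describe. Your explicit treatment of the flux term as a perfect derivative and the integrating-factor form of Gronwall are just more detailed renderings of the same steps.
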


\begin{proof}
Due to \eqref{eq:OHepsw1} and \eqref{eq:uP},
\begin{align*}
\frac{d}{dt}\int_{\R} \ued^2dx=&2\int_{\R} \ued\pt\ued dx\\
=&2\eps\int_{\R}\ued\pxx\ued dx-2\int_{\R} \ued f'(\ued)\px\ued dx+2\gamma\int_{\R} \ued\Ped dx\\
\le&-2\eps\int_{\R}(\px\ued)^2 dx+2\gamma\norm{\ued(t,\cdot)}_{L^2(\R)}^2.
\end{align*}
The Gronwall Lemma and  \eqref{eq:u0eps} give \eqref{eq:l2-u}.

Finally, \eqref{eq:h2-P} follows from \eqref{eq:equ-L2-stima1}, \eqref{eq:L-infty-Px} and \eqref{eq:l2-u}.
\end{proof}

\begin{lemma}
\label{lm:p8}
For each $t\ge 0$, we have that
\begin{align}
\label{eq:intp-infty}
\int_{0}^{-\infty}\Ped(t,x)dx&=a_{\eps,\delta}(t), \\
\label{eq:int+infty}
\int_{0}^{\infty}\Ped(t,x)dx&=a_{\eps,\delta}(t),
\end{align}
where
\begin{equation}
a_{\eps,\delta}(t)=\frac{\delta}{\gamma}\ptx\Ped(t,0)- \frac{1}{\gamma}\pt \Ped(t,0)+ \frac{1}{\gamma}f(0)-\frac{1}{\gamma}f(\ued(t,0)) + \frac{\eps}{\gamma}\px\ued(t,0).
\end{equation}
In particular,
\begin{equation}
\label{eq:Pmedianulla}
\int_{\R}\Ped(t,x)dx=0, \quad t\geq 0.
\end{equation}
\end{lemma}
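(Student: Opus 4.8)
The plan is to solve the first equation of \eqref{eq:OHepsw1} for $\gamma\Ped$ and integrate the resulting identity separately over the two half-lines $(0,\infty)$ and $(-\infty,0)$. Every boundary contribution produced at $x=\pm\infty$ will be shown to vanish, while the contributions at $x=0$ assemble into $a_{\eps,\delta}(t)$; the two half-line identities \eqref{eq:int+infty} and \eqref{eq:intp-infty} then follow, and \eqref{eq:Pmedianulla} is obtained by subtracting them.

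Before integrating I would collect the decay information at infinity. Lemma~\ref{lm:cns1} already furnishes $\Ped(t,\infty)=0$ and $\px\Ped(t,\pm\infty)=0$. Since $\px\Ped(t,\cdot)\in L^2(\R)$ by \eqref{eq:h2-P} and $\Ped(t,\cdot)\in L^2(\R)$ from the a priori estimates (whose initial value is controlled in \eqref{eq:u0eps}), the function $\Ped(t,\cdot)$ belongs to $H^1(\R)$, and hence also $\Ped(t,-\infty)=0$. As all these limits equal $0$ for every $t$, differentiation in $t$ yields $\pt\Ped(t,\pm\infty)=0$ and $\ptx\Ped(t,\pm\infty)=0$; the decay of the smooth solution $\ued$ gives moreover $\ued(t,\pm\infty)=0$ and $\px\ued(t,\pm\infty)=0$, so that $f(\ued(t,\pm\infty))=f(0)$.

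For the main computation I rewrite the first line of \eqref{eq:OHepsw1} as $\gamma\Ped=\pt\ued+\px f(\ued)-\eps\pxx\ued$. Integrating over $(0,\infty)$, the convective term gives $f(0)-f(\ued(t,0))$ and the viscous term gives $\eps\px\ued(t,0)$. For the evolution term I substitute $\ued=\px\Ped-\delta\pxx\Ped$ from the second line of \eqref{eq:OHepsw1}, whence $\int_0^\infty\pt\ued\,dx=\int_0^\infty(\ptx\Ped-\delta\ptxx\Ped)\,dx$ reduces, after integrating once in $x$ and discarding the limits at $+\infty$, to $-\pt\Ped(t,0)+\delta\ptx\Ped(t,0)$. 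Summing the three contributions and dividing by $\gamma$ gives precisely \eqref{eq:int+infty}. Performing the identical steps on $(-\infty,0)$, using $\int_0^{-\infty}=-\int_{-\infty}^0$ and discarding the limits at $-\infty$, reproduces the same right-hand side and yields \eqref{eq:intp-infty}. Finally $\int_\R\Ped\,dx=\int_0^\infty\Ped\,dx-\int_0^{-\infty}\Ped\,dx=a_{\eps,\delta}(t)-a_{\eps,\delta}(t)=0$, which is \eqref{eq:Pmedianulla}.

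The step I expect to be the genuine obstacle is not the algebra but the justification that every boundary term at $x=\pm\infty$ vanishes and that the improper half-line integrals converge. The delicate point is $\pt\Ped(t,-\infty)=0$: Lemma~\ref{lm:cns1} supplies only $\px\Ped(t,-\infty)=0$, not $\Ped(t,-\infty)=0$, and trying to deduce the latter from conservation of $\int_\R\ued$ is circular, since $\frac{d}{dt}\int_\R\ued=\gamma\int_\R\Ped$ is exactly the quantity under study. The circularity is broken by the $L^2$ decay: integrating the elliptic relation $-\delta\pxx\Ped+\px\Ped=\ued$ over $\R$ and using $\px\Ped(t,\pm\infty)=\Ped(t,\infty)=0$ gives $\Ped(t,-\infty)=-\int_\R\ued(t,y)\,dy$, and the membership $\Ped(t,\cdot)\in H^1(\R)$ forces this limit, hence $\int_\R\ued$, to vanish for every $t$. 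This simultaneously guarantees integrability near $-\infty$ and the vanishing of the time-differentiated boundary terms, after which the computation above is fully justified.
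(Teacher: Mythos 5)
Your computation coincides with the paper's proof up to the order of operations: the paper first integrates the elliptic equation $-\delta\pxx\Ped+\px\Ped=\ued$ on $(0,x)$, lets $x\to\pm\infty$, and differentiates in $t$ (see \eqref{eq:P-in-0}--\eqref{eq:lim-int-u-in-t}), then integrates the parabolic equation on $(0,x)$ and passes to the limit; you instead integrate $\gamma\Ped=\pt\ued+\px f(\ued)-\eps\pxx\ued$ over each half-line and substitute $\ued=\px\Ped-\delta\pxx\Ped$ inside the time term. The boundary contributions at $x=0$ and at $\pm\infty$ are identical in the two arrangements, so both derivations produce the same $a_{\eps,\delta}(t)$ on each half-line and \eqref{eq:Pmedianulla} by subtraction; to this extent the proposal is correct and is essentially the paper's argument.

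There is, however, one concrete defect in your justification of the boundary terms at $-\infty$. You rightly observe that Lemma \ref{lm:cns1} supplies only $\px\Ped(t,-\infty)=0$ and that $\Ped(t,-\infty)=0$ is the delicate point (the paper's own proof silently uses it while citing \eqref{eq:P-pxP-intfy1}, which does not contain it). But your patch --- ``$\Ped(t,\cdot)\in L^2(\R)$ from the a priori estimates'' --- is not available at this stage: \eqref{eq:u0eps} controls only the initial datum $P_{\eps,\delta,0}$, and the first $L^2(\R)$ bound on $\Ped(t,\cdot)$ for $t>0$ is \eqref{eq:l2P} in Lemma \ref{lm:P-infty}, which is proved \emph{after} Lemma \ref{lm:p8} and whose proof uses \eqref{eq:Pmedianulla} through \eqref{eq:F-in-infty}. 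Read against the paper's architecture, your appeal to that estimate is exactly the circularity you set out to avoid, merely relocated from $\int_\R\ued\,dx$ to $\norm{\Ped(t,\cdot)}_{L^2(\R)}$. The non-circular repair is to take the decay of $\Ped(t,\cdot)$ at $\pm\infty$ (e.g.\ $\Ped(t,\cdot)\in H^2(\R)$, hence $\Ped(t,\pm\infty)=0$) as a qualitative property of the classical solution of the parabolic--elliptic system constructed in \cite{CHK:ParEll}, rather than from a quantitative a priori bound; with that substitution your identity $\Ped(t,-\infty)=-\int_\R\ued(t,y)\,dy$ closes the argument, and on this point your write-up is actually more explicit than the paper's.
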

\begin{proof}
We begin by observing that, integrating the second equation of \eqref{eq:OHepsw1} on $(0,x)$, we have that
\begin{equation}
\label{eq:P-in-0}
\int_{0}^{x} \ued(t,y)dy = \Ped(t,x)-\Ped(t,0)-\delta\px\Ped(t,x)+\delta\px\Ped(t,0).
\end{equation}
It follows from \eqref{eq:P-pxP-intfy1} that
\begin{equation}
\label{eq:lim-int-u}
\lim_{x\to -\infty}\int_{0}^{x} \ued(t,y)dy=\int_{0}^{-\infty}\ued(t,x)dx =  \delta\px\Ped(t,0) -  \Ped(t,0).
\end{equation}
Differentiating \eqref{eq:lim-int-u} with respect to $t$, we get
\begin{equation}
\label{eq:lim-int-u-in-t}
\frac{d}{dt}\int_{0}^{-\infty}\ued(t,x)dx= \int_{0}^{-\infty}\pt\ued(t,x)dx=\delta\ptx\Ped(t,0) - \pt \Ped(t,0).
\end{equation}
Integrating the first equation \eqref{eq:OHepsw} on $(0,x)$, we obtain that
\begin{equation}
\begin{split}
\label{eq:int-1-eq}
\int_{0}^{x}\pt\ued(t,y) dy &+ f(\ued(t,x))-f(\ued(t,0))\\
&-\eps\px\ued(t,x)+ \eps\px\ued(t,0)=\gamma\int_{0}^{x}\Ped(t,y)dy.
\end{split}
\end{equation}
Being $\ued$ a smooth solution of \eqref{eq:OHepsw}, we get
\begin{equation}
\label{eq:500}
\lim_{x\to-\infty}\Big( f(\ued(t,x))-\eps\px\ued(t,x)\Big)=f(0).
\end{equation}
Sending $x\to -\infty$ in \eqref{eq:int-1-eq}, for \eqref{eq:lim-int-u-in-t} and \eqref{eq:500}, we have
\begin{align*}
\gamma\int_{0}^{-\infty}\Ped(t,x)dx= &\delta\ptx\Ped(t,0) - \pt \Ped(t,0)\\
&+f(0)-f(\ued(t,0)) + \eps \px\ued(t,0),
\end{align*}
which gives \eqref{eq:intp-infty}.

Let us show that \eqref{eq:int+infty} holds true. We begin by observing that, for \eqref{eq:P-pxP-intfy1} and \eqref{eq:P-in-0},
\begin{equation*}
\int_{0}^{\infty}\ued(t,x)dx =  \delta\px\Ped(t,0) -  \Ped(t,0).
\end{equation*}
Therefore,
\begin{equation}
\label{eq:lim-int-u-1}
\lim_{x\to \infty}\int_{0}^{x}\pt \ued(t,y)dy=\int_{0}^{\infty}\pt\ued(t,x)dx =  \delta\ptx\Ped(t,0) -  \pt\Ped(t,0).
\end{equation}
Again by the regularity of $\ued$,
\begin{equation}
\label{eq:510}
\lim_{x\to\infty}\Big( f(\ued(t,x))-\eps\px\ued(t,x)\Big)=f(0).
\end{equation}
It follows from \eqref{eq:int-1-eq}, \eqref{eq:lim-int-u-1} and \eqref{eq:510} that
\begin{align*}
\gamma\int_{0}^{\infty}\Ped(t,x)dx= &\delta\ptx\Ped(t,0) - \pt \Ped(t,0)\\
&+f(0)-f(\ued(t,0)) + \eps\px\ued(t,0),
\end{align*}
which gives \eqref{eq:int+infty}.

Finally, we prove \eqref{eq:Pmedianulla}. It follows from \eqref{eq:intp-infty} that
\begin{equation*}
\int_{-\infty}^{0}\Ped(t,x)dx = -a_{\eps,\delta}(t).
\end{equation*}
Therefore, for \eqref{eq:int+infty},
\begin{align*}
\int_{-\infty}^{0}\Ped(t,x)dx+\int_{0}^{\infty}\Ped(t,x)=\int_{\R} \Ped(t,x)dx =-a_{\eps,\delta}(t)+a_{\eps,\delta}(t)=0,
\end{align*}
that is \eqref{eq:Pmedianulla}.
\end{proof}

Lemma \ref{lm:p8} says that $\Ped(t,x)$  is integrable at $\pm\infty$.
Therefore, for each $t\ge 0$, we can consider the following function
\begin{equation}
\label{eq:F1}
\Fed(t,x)=\int_{-\infty}^{x}\Ped(t,y)dy.
\end{equation}

\begin{lemma}
\label{lm:P-infty}
Let $T>0$. There exists a function $C(T)>0$, independent on $\delta$, such that
\begin{align}
\label{eq:P-infty}
\norm{\Ped}_{L^{\infty}(I_{T,1})}&\le C(T),\\
\label{eq:l2P}
\norm{\Ped(t,\cdot)}_{L^2(\R)}&\le C(T),\\
\label{eq:l2pxP}
\delta\norm{\px\Ped(t,\cdot)}_{L^2(\R)}&\le C(T),
\end{align}
where
\begin{equation}
\label{eq:defI}
I_{T,1}=(0,T)\times\R.
\end{equation}
In particular, we have
\begin{equation}
\label{eq:pt-px-P}
\delta\left\vert\int_{0}^{t}\!\!\!\int_{\R}\Ped\ptx\Ped ds dx\right\vert\le C(T), \quad 0<t<T.
\end{equation}
\end{lemma}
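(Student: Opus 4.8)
The plan is to treat the three norms and the space--time integral in turn, the genuinely new work being the $L^2$ bound \eqref{eq:l2P}. The estimate \eqref{eq:l2pxP} is immediate: by \eqref{eq:h2-P} one already has $\norm{\px\Ped(t,\cdot)}_{L^2(\R)}\le e^{\gamma t}\norm{u_{\eps,0}}_{L^2(\R)}$, so since $0<\delta<1$ we get $\delta\norm{\px\Ped(t,\cdot)}_{L^2(\R)}\le e^{\gamma T}\norm{u_{\eps,0}}_{L^2(\R)}=:C(T)$, independent of $\delta$ by \eqref{eq:u0eps}. For the remaining estimates I would first record the boundary behaviour $\Ped(t,\pm\infty)=0=\Fed(t,\pm\infty)$: the value at $+\infty$ comes from Lemma \ref{lm:cns1}, the value at $-\infty$ from integrability together with $\int_\R\ued\,dx=0$, while $\Fed(t,\infty)=\int_\R\Ped\,dx=0$ is exactly \eqref{eq:Pmedianulla}. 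These justify all the integrations by parts below.

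Next I would pass to an integrated form of the equation. Integrating the first line of \eqref{eq:OHepsw1} over $(-\infty,x)$, using \eqref{eq:P-in-0}, the fact that \eqref{eq:assflux1} forces $f'(0)=0$, and the vanishing boundary terms, gives
\begin{equation*}
\pt\Ped-\delta\ptx\Ped+\big(f(\ued)-f(0)\big)-\eps\px\ued=\gamma\Fed .
\end{equation*}
Multiplying by $\Ped$, integrating over $\R$ and over $[0,t]$, and using $\int_\R\Ped\Fed\,dx=\tfrac12\int_\R\px(\Fed^2)\,dx=0$ (this is where the nonlocal source disappears), I obtain
\begin{equation*}
\tfrac12\norm{\Ped(t,\cdot)}_{L^2(\R)}^2=\tfrac12\norm{\Ped(0,\cdot)}_{L^2(\R)}^2+\delta\!\int_0^t\!\!\int_\R\Ped\ptx\Ped\,dx\,ds-\int_0^t\!\!\int_\R\Ped\big(f(\ued)-f(0)\big)\,dx\,ds+\eps\!\int_0^t\!\!\int_\R\Ped\px\ued\,dx\,ds .
\end{equation*}

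The central point is to control the $\delta$--term, which couples back to \eqref{eq:pt-px-P}. Writing $\int_\R\Ped\ptx\Ped\,dx=-\int_\R\px\Ped\,\pt\Ped\,dx$ and substituting $\pt\Ped$ from the integrated equation, together with $\int_\R\px\Ped\,\Fed\,dx=-\norm{\Ped}_{L^2(\R)}^2$ and $\int_\R\px\Ped\,\ptx\Ped\,dx=\tfrac12\tfrac{d}{dt}\norm{\px\Ped}_{L^2(\R)}^2$, yields
\begin{equation*}
\delta\!\int_0^t\!\!\int_\R\Ped\ptx\Ped\,dx\,ds=-\tfrac{\delta^2}{2}\Big(\norm{\px\Ped(t,\cdot)}_{L^2(\R)}^2-\norm{\px\Ped(0,\cdot)}_{L^2(\R)}^2\Big)+\delta\gamma\!\int_0^t\!\norm{\Ped(s,\cdot)}_{L^2(\R)}^2\,ds+R ,
\end{equation*}
where $R=\delta\int_0^t\!\int_\R\px\Ped\,(f(\ued)-f(0))-\delta\eps\int_0^t\!\int_\R\px\Ped\,\px\ued$. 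Inserting this into the energy identity, the first bracket is bounded uniformly in $\delta$ by \eqref{eq:h2-P}, the term $\delta\gamma\int_0^t\norm{\Ped}_{L^2(\R)}^2$ feeds Gronwall, and the nonlinear terms are estimated by Cauchy--Schwarz together with the one--dimensional Gagliardo--Nirenberg inequalities $\norm{\Ped}_{L^\infty(\R)}^2\le 2\norm{\Ped}_{L^2(\R)}\norm{\px\Ped}_{L^2(\R)}$ and $\norm{\ued}_{L^4(\R)}^4\le C\norm{\ued}_{L^2(\R)}^3\norm{\px\ued}_{L^2(\R)}$, the a priori bounds \eqref{eq:h2-P}, \eqref{eq:l2-u}, and Young's inequality; in particular $\big|\int_\R\Ped(f(\ued)-f(0))\big|\le\tfrac{C_0}{2}\norm{\Ped}_{L^\infty(\R)}\norm{\ued}_{L^2(\R)}^2\le C(T)\big(1+\norm{\Ped}_{L^2(\R)}^2\big)$. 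The upshot is $\norm{\Ped(t,\cdot)}_{L^2(\R)}^2\le C(T)+C(T)\int_0^t\norm{\Ped(s,\cdot)}_{L^2(\R)}^2\,ds$, whence Gronwall's lemma gives \eqref{eq:l2P}. With \eqref{eq:l2P} in hand, \eqref{eq:P-infty} follows from $\norm{\Ped}_{L^\infty(\R)}^2\le 2\norm{\Ped}_{L^2(\R)}\norm{\px\Ped}_{L^2(\R)}$ and \eqref{eq:h2-P}, and \eqref{eq:pt-px-P} follows by reading off the displayed identity for the $\delta$--term, every summand of which is now bounded by $C(T)$.

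The main obstacle is precisely this coupling: the energy identity for $\norm{\Ped}_{L^2(\R)}^2$ contains the very quantity \eqref{eq:pt-px-P} that is to be estimated, so the two must be closed simultaneously, and one must do so without an a priori $L^\infty$ bound on $\ued$. The device that breaks the circle is to re-express $\delta\int_\R\Ped\ptx\Ped\,dx$ through $\tfrac{d}{dt}\norm{\px\Ped}_{L^2(\R)}^2$, which is controlled by \eqref{eq:h2-P}, plus a term carrying a harmless factor $\delta$, and to absorb the quadratic flux contribution via the Gagliardo--Nirenberg control of $\norm{\Ped}_{L^\infty(\R)}$ rather than of $\ued$. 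Care is also needed throughout to justify the boundary terms at $\pm\infty$, which rests on the decay of $\ued\in C((0,T);H^\ell(\R))$ and of $\Ped$, and on the mean-zero properties \eqref{eq:Pmedianulla}.
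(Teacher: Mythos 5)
Your proposal is correct and, at its core, retraces the paper's argument: you derive the same integrated relation $\pt\Ped-\delta\ptx\Ped=\gamma\Fed-\bigl(f(\ued)-f(0)\bigr)+\eps\px\ued$ (the paper's \eqref{eq:1554}, where $f(0)$ is silently dropped but is harmless, as you note, because $\int_{\R}\Ped\,dx=0$), and your device of testing with $\Ped$ and then re-substituting the equation into $\delta\int_{\R}\Ped\ptx\Ped\,dx$ reproduces exactly the energy identity \eqref{eq:12312} that the paper obtains by testing with $\Ped-\delta\px\Ped$, including the key cancellation $\int_{\R}\Fed\Ped\,dx=0$ and the identity $-\gamma\delta\int_{\R}\Fed\px\Ped\,dx=\gamma\delta\norm{\Ped(t,\cdot)}^2_{L^2(\R)}$. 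The one genuine difference is how the circular dependence on the sup norm of $\Ped$ is broken. The paper carries $\norm{\Ped}_{L^{\infty}(I_{T,1})}$ as an unknown constant through Gronwall, reaching \eqref{eq:505}, then interpolates and solves the resulting algebraic inequality $\norm{\Ped}^2_{L^{\infty}(I_{T,1})}-C(T)\norm{\Ped}_{L^{\infty}(I_{T,1})}-C(T)\le 0$. You instead interpolate slice-wise, $\norm{\Ped(t,\cdot)}^2_{L^{\infty}(\R)}\le 2\norm{\Ped(t,\cdot)}_{L^2(\R)}\norm{\px\Ped(t,\cdot)}_{L^2(\R)}$, and exploit the $\delta$-uniform bound $\norm{\px\Ped(t,\cdot)}_{L^2(\R)}\le e^{\gamma t}\norm{u_{\eps,0}}_{L^2(\R)}$ of \eqref{eq:h2-P} to reduce the flux term to $C(T)\bigl(1+\norm{\Ped(t,\cdot)}^2_{L^2(\R)}\bigr)$, closing a linear Gronwall inequality for the $L^2$ norm alone and recovering $L^\infty$ afterwards; this buys a slightly cleaner closure (no quadratic-inequality step) while resting on exactly the same input \eqref{eq:h2-P}. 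Two minor points: for $\delta\int_{\R}\vert\px\Ped\vert\ued^2\,dx$ your $L^4$ Gagliardo--Nirenberg route works but is heavier than the paper's shortcut $\sqrt{\delta}\norm{\px\Ped(t,\cdot)}_{L^{\infty}(\R)}\le\norm{\ued(t,\cdot)}_{L^2(\R)}$ from \eqref{eq:L-infty-Px}, which even yields a spare factor $\sqrt{\delta}$; and your observation that \eqref{eq:l2pxP} already follows from \eqref{eq:h2-P} even without the factor $\delta$ is correct (the paper instead deduces it from \eqref{eq:505}). Your derivation of \eqref{eq:pt-px-P} by reading off the $\delta$-term identity is equivalent to the paper's, which re-tests \eqref{eq:1554} with $\Ped$ and integrates in time.
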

\begin{proof}
Integrating the second equation of \eqref{eq:OHepsw} on $(-\infty, x)$, for \eqref{eq:P-pxP-intfy1},  we have that
\begin{equation}
\label{eq:1550}
\int_{-\infty}^{x} \ued(t,y)dy=\Ped(t,x) -\delta\px\Ped(t,x).
\end{equation}
Differentiating \eqref{eq:1550} with respect to $t$, we get
\begin{equation}
\label{eq:1551}
\frac{d}{dt}\int_{-\infty}^{x} \ued(t,y)dy=\int_{-\infty}^{x}\pt \ued(t,y)dy=\pt\Ped(t,x) -\delta\ptx\Ped(t,x).
\end{equation}
It follows from an integration of the first equation of \eqref{eq:OHepsw1} on $(-\infty, x)$ and \eqref{eq:F1} that
\begin{equation}
\label{eq:1552}
\int_{-\infty}^{x}\pt\ued(t,y)dy + f(\ued(t,x)) - \eps\px\ued(t,x)=\gamma\Fed(t,x).
\end{equation}
Due to \eqref{eq:1551} and \eqref{eq:1552}, we have
\begin{equation}
\label{eq:1554}
\pt\Ped(t,x)-\delta\ptx\Ped(t,x) =\gamma\Fed(t,x)- f(\ued(t,x)) +\eps\px\ued(t,x).
\end{equation}
Multiplying \eqref{eq:1554} by $\Ped - \delta\px\Ped$, we have
\begin{equation}
\label{eq:1555}
\begin{split}
(\pt\Ped-\delta\ptx\Ped)(\Ped - \delta\px\Ped)= &\gamma\Fed(\Ped - \delta\px\Ped)\\
&- f(\ued)(\Ped - \delta\px\Ped)\\
&+\eps\px\ued(\Ped - \delta\px\Ped).
\end{split}
\end{equation}
Integrating \eqref{eq:1555} on $(0,x)$, we have
\begin{equation}
\label{eq:1222}
\begin{split}
\int_{0}^{x}\pt\Ped\Ped dy&-\delta\int_{0}^{x} \pt\Ped\px\Ped dy\\
&-\delta \int_{0}^{x}\Ped\ptx\Ped dy +\delta^2\int_{0}^{x}\ptx\Ped\px\Ped dy\\
=& \gamma\int_{0}^{x}\Fed\Ped dy - \gamma\delta\int_{0}^{x} \Fed\px\Ped dy\\
&-\int_{0}^{x}f(\ued)\Ped dy + \delta \int_{0}^{x}f(\ued)\px\Ped dy\\
&+\eps\int_{0}^{x}\px\ued\Ped dy - \eps\delta\int_{0}^{x}\px\ued\px\Ped dy.
\end{split}
\end{equation}
We observe that
\begin{equation}
\label{eq:int-by-part}
-\delta \int_{0}^{x}\px\Ped\pt\Ped dy=-\delta\Ped\pt\Ped +\delta\Ped(t,0)\pt\Ped(t,0) + \delta\int_{0}^{x}\Ped\ptx\Ped dy.
\end{equation}
Therefore, \eqref{eq:1222} and \eqref{eq:int-by-part} give
\begin{equation}
\begin{split}
\label{eq:1223}
\int_{0}^{x}\pt\Ped\Ped dy&+ \delta^2\int_{0}^{x}\ptx\Ped\px\Ped dy\\
=& \delta\Ped\pt\Ped  -\delta\Ped(t,0)\pt\Ped(t,0)  + \gamma\int_{0}^{x}\Fed\Ped dy \\
&- \gamma\delta\int_{0}^{x} \Fed\px\Ped dy-\int_{0}^{x}f(\ued)\Ped dy + \delta \int_{0}^{x}f(\ued)\px\Ped dy\\
&+\eps\int_{0}^{x}\px\ued\Ped dy - \eps\delta\int_{0}^{x}\px\ued\px\Ped dy.
\end{split}
\end{equation}
Sending $x\to -\infty$, for \eqref{eq:P-pxP-intfy1}, we get
\begin{equation}
\label{eq:0012}
\begin{split}
\int_{0}^{-\infty}\pt\Ped\Ped dy&+ \delta^2\int_{0}^{-\infty}\ptx\Ped\px\Ped dy\\
=& -\delta\Ped(t,0)\pt\Ped(t,0) + \gamma\int_{0}^{-\infty}\Fed\Ped dy \\
&- \gamma\delta\int_{0}^{-\infty} \Fed\px\Ped dy-\int_{0}^{-\infty}f(\ued)\Ped dy \\
&+ \delta \int_{0}^{-\infty}f(\ued)\px\Ped dy+\eps\int_{0}^{-\infty}\px\ued\Ped dy\\
& - \eps\delta\int_{0}^{-\infty}\px\ued\px\Ped dy,
\end{split}
\end{equation}
while sending $x\to\infty$,
\begin{equation}
\label{eq:0013}
\begin{split}
\int_{0}^{\infty}\pt\Ped\Ped dy&+ \delta^2\int_{0}^{\infty}\ptx\Ped\px\Ped dy\\
=& -\delta\Ped(t,0)\pt\Ped(t,0) + \gamma\int_{0}^{\infty}\Fed\Ped dy- \gamma\delta\int_{0}^{\infty} \Fed\px\Ped dy \\
&-\int_{0}^{\infty}f(\ued)\Ped dy + \delta \int_{0}^{\infty}f(\ued)\px\Ped dy\\
&+\eps\int_{0}^{\infty}\px\ued\Ped dy - \eps\delta\int_{0}^{\infty}\px\ued\px\Ped dy.
\end{split}
\end{equation}
Since
\begin{align*}
\int_{\R}\Ped\pt\Ped dx &=\frac{1}{2}\frac{d}{dt}\int_{\R}\Ped^2dx,\\
\delta^2\int_{\R}\ptx\Ped\px\Ped dx &= \frac{\delta^2}{2}\frac{d}{dt}\int_{\R}(\px\Ped)^2dx,
\end{align*}
it follows from \eqref{eq:0012} and \eqref{eq:0013} that
\begin{equation}
\label{eq:12312}
\begin{split}
\frac{1}{2}\frac{d}{dt}\int_{\R}\Ped^2dx&+\frac{\delta^2}{2}\frac{d}{dt}\int_{\R}(\px\Ped)^2dx\\
=& \gamma\int_{\R}\Fed\Ped dx - \gamma\delta\int_{\R} \Fed\px\Ped dx\\
&-\int_{\R}f(\ued)\Ped dx + \delta \int_{\R}f(\ued)\px\Ped dx\\
&+\eps\int_{\R}\px\ued\Ped dx - \eps\delta\int_{\R}\px\ued\px\Ped dx.
\end{split}
\end{equation}
Due to \eqref{eq:Pmedianulla} and \eqref{eq:F1},
\begin{equation}
\label{eq:F-in-infty}
\begin{split}
2\gamma\int_{\R}\Fed\Ped dx&=2\gamma\int_{\R}\Fed\px\Fed dx =\gamma(\Fed(t,\infty))^2\\
&=\gamma\left( \int_{\R} \Ped(t,x)dx \right)^2=0.
\end{split}
\end{equation}
\eqref{eq:12312} and \eqref{eq:F-in-infty} give
\begin{equation}
\label{eq:12234}
\begin{split}
&\frac{d}{dt}\left(\int_{\R}\Ped^2dx + \delta^2\int_{\R}(\px\Ped)^2dx\right)\\
&\quad=-2\gamma\delta\int_{\R} \Fed\px\Ped dx -2\int_{\R}f(\ued)\Ped dx\\
&\qquad + 2\delta \int_{\R}f(\ued)\px\Ped dx +2\eps\int_{\R}\px\ued\Ped dx\\
&\qquad - 2\eps\delta\int_{\R}\px\ued\px\Ped dx.
\end{split}
\end{equation}
Thanks to \eqref{eq:P-pxP-intfy1}, \eqref{eq:Pmedianulla} and \eqref{eq:F1},
\begin{equation}
\label{eq:346}
-2\delta\gamma\int_{\R}\px\Ped\Fed dx=2\delta\gamma\int_{\R}\Ped\px\Fed dx = 2\delta\gamma\int_{\R} \Ped^2 dx\le 2\gamma \int_{\R} \Ped^2 dx,
\end{equation}
while for \eqref{eq:P-pxP-intfy1},
\begin{equation}
\label{eq:347}
\begin{split}
2\eps\int_{\R}\px\ued\Ped dx=&-2\eps\int_{\R}\ued\px\Ped dx.
\end{split}
\end{equation}
Hence, for \eqref{eq:assflux1}, \eqref{eq:346} and \eqref{eq:347},  we get
\begin{equation*}
\begin{split}
&\frac{d}{dt}\left(\int_{\R}\Ped^2dx + \delta^2\int_{\R}(\px\Ped)^2dx\right)\\
&\quad\le 2\gamma \int_{\R} \Ped^2 dx -2\int_{\R}f(\ued)\Ped dx + 2\delta \int_{\R}f(\ued)\px\Ped dx \\
&\qquad  -2 \eps \int_{\R}\ued\px\Ped dx- 2\eps\delta\int_{\R}\px\ued\px\Ped dx\\
&\quad\le 2\gamma \int_{\R} \Ped^2 dx+ 2\left\vert \int_{\R}f(\ued)\Ped dx\right\vert + 2\delta\left\vert\int_{\R}f(\ued)\px\Ped dx\right\vert\\
&\qquad   +2 \eps \left\vert\int_{\R}\ued\px\Ped dx\right\vert+ 2\eps\delta\left\vert\int_{\R}\px\ued\px\Ped dx\right\vert\\
&\quad\le 2\gamma \int_{\R} \Ped^2 dx+ 2\int_{\R}\vert f(\ued)\vert\vert\Ped \vert dx + 2\delta\int_{\R}\vert f(\ued)\vert \vert\px\Ped\vert dx\\
&\qquad  + 2\eps\int_{\R}\vert\ued\vert\vert\px\Ped\vert dx + 2\eps\delta\int_{\R}\vert\px\ued\vert\vert\px\Ped\vert dx\\
&\quad\le 2\gamma \int_{\R} \Ped^2 dx + 2C_{0}\int_{\R}\vert\Ped \vert\ued^2 dx + 2C_{0}\delta\int_{\R}\vert\px\Ped\vert\ued^2 dx\\
&\qquad + 2\eps\int_{\R}\vert\ued\vert\vert\px\Ped\vert dx + 2\eps\delta\int_{\R}\vert\px\ued\vert\vert\px\Ped\vert dx.
\end{split}
\end{equation*}
For the Young inequality,
\begin{align*}
2\eps\int_{\R}\vert\px\Ped\vert\vert\ued\vert &\le\eps\norm{\px\Ped(t,\cdot)}^2_{L^{2}(\R)}+ \eps \norm{\ued(t,\cdot)}^2_{L^{2}(\R)},\\
2\eps\delta\int_{\R}\vert\px\ued\vert\vert\px\Ped\vert dx&=\int_{\R}\left\vert\frac{\eps\px\ued}{\sqrt{\gamma}}\right\vert\vert 2\sqrt{\gamma}\delta\px\Ped\vert dx\\
& \le \frac{\eps^2}{2\gamma}\norm{\px\ued(t,\cdot)}^2_{L^{2}(\R)} + 2\delta^2\gamma\norm{\px\Ped(t,\cdot)}^2_{L^{2}(\R)}.
\end{align*}
Thus,
\begin{equation}
\label{eq:350}
\begin{split}
\frac{d}{dt}G(t)-2\gamma G(t)\le & \eps\norm{\ued(t,\cdot)}^2_{L^{2}(\R)}+ 2C_{0}\int_{\R}\vert\Ped \vert\ued^2 dx\\  &+2C_{0}\delta\int_{\R}\vert\px\Ped\vert\ued^2 dx + \eps\norm{\px\Ped(t,\cdot)}^2_{L^{2}(\R)}\\
&+ \frac{\eps^{2}}{2\gamma}\norm{\px\ued(t,\cdot)}^2_{L^{2}(\R)},
\end{split}
\end{equation}
where
\begin{equation}
\label{eq:def-di-G}
G(t)=\norm{\Ped(t,\cdot)}^2_{L^2(\R)} + \delta^2\norm{\px\Ped(t,\cdot)}^2_{L^2(\R)}.
\end{equation}
We observe that, for \eqref{eq:l2-u},
\begin{equation}
\label{eq:399}
2C_{0}\int_{\R} \vert\Ped\vert \ued^2 dx\le C_{0}e^{2\gamma t} \norm{\Ped}_{L^{\infty}(I_{T,1})},
\end{equation}
where $I_{T,1}$ is defined in \eqref{eq:defI}.\\
Since $0<\delta <1$, it follows from \eqref{eq:l2-u} and \eqref{eq:h2-P} that
\begin{equation}
\label{eq:400}
\begin{split}
2C_{0}\delta\int_{\R}\vert\px\Ped\vert \ued^2dx&\le 2C_{0}\delta\norm{\px\Ped(t,\cdot)}_{L^{\infty}(\R)}\norm{\ued(t,\cdot)}^2_{L^2(\R)}\\
&\le 2\sqrt{\delta}C_{0}e^{3\gamma t}\le C_{0}e^{3\gamma t}.
\end{split}
\end{equation}
Again by \eqref{eq:h2-P}, we have that
\begin{equation}
\label{eq:401}
\eps\norm{\px\Ped(t,\cdot)}^2_{L^{2}(\R)}\le \eps\norm{\px\Ped(t,\cdot)}^2_{L^{2}(\R)}\le C_{0}e^{2\gamma t}.
\end{equation}
Therefore, \eqref{eq:l2-u}, \eqref{eq:400} and \eqref{eq:401} give
\begin{equation*}
\frac{d}{dt}G(t)-2\gamma G(t)\le  C_{0}\left( \norm{\Ped}_{L^{\infty}(I_{T,1})}+ 1 \right)e^{2\gamma t} + C_{0}e^{3\gamma t}+ \frac{\eps^2}{2\gamma}\norm{\px\ued(t,\cdot)}^2_{L^{2}(\R)}.
\end{equation*}
The Gronwall Lemma, \eqref{eq:u0eps},  \eqref{eq:l2-u} and \eqref{eq:def-di-G} give
\begin{align*}
&\norm{\Ped(t,\cdot)}^2_{L^2(\R)} + \delta^2\norm{\px\Ped(t,\cdot)}^2_{L^2(\R)}\\
&\quad\le \norm{P_{\eps,0}}^2_{L^2(0,\infty)}e^{2\gamma t} +  \left( \norm{\Ped}_{L^{\infty}(I_{T,1})}+ 1 \right)t e^{2\gamma t} + C_{0}t e^{3\gamma t}\\
&\qquad  +\frac{\eps^2 e^{2\gamma t}}{2\gamma}\int_{0}^{t} e^{-2\gamma s} \norm{\px\ued(s,\cdot)}^2_{L^{2}(\R)}ds\\
&\quad \le  \norm{P_{\eps,0}}^2_{L^2(0,\infty)}e^{2\gamma t} +  \left( \norm{\Ped}_{L^{\infty}(I_{T,1})}+ 1 \right)t e^{2\gamma t} + C_{0}t e^{3\gamma t}+C_{0}e^{2\gamma t}.
\end{align*}
Hence,
\begin{equation}
\label{eq:505}
\norm{\Ped(t,\cdot)}^2_{L^2(\R)} + \delta^2\norm{\px\Ped(t,\cdot)}^2_{L^2(\R)}\le C(T)\left( \norm{\Ped}_{L^{\infty}(I_{T,1})}+ 1 \right).
\end{equation}
Due to \eqref{eq:h2-P}, \eqref{eq:505} and the H\"older inequality,
\begin{align*}
\Ped^2(t,x)&\le 2\int_{\R}\vert\Ped\vert\vert\px\Ped\vert dx \le 2\norm{\Ped(t,\cdot)}_{L^2(\R)} \norm{\px\Ped(t,\cdot)}_{L^2(\R)}\\
&\le 2\sqrt{C(T)\left(\norm{\Ped}_{L^{\infty}(I_{T,1})}+1\right)}\sqrt{C_{0}}e^{\gamma t}\le C(T)\left(\norm{\Ped}_{L^{\infty}(I_{T,1})}+1\right).
\end{align*}
Therefore,
\begin{equation*}
\norm{\Ped}^2_{L^{\infty}(I_{T,1})} - C(T) \norm{\Ped}_{L^{\infty}(I_{T,1})} - C(T)\le 0,
\end{equation*}
which gives \eqref{eq:P-infty}.

\eqref{eq:l2P} and \eqref{eq:l2pxP} follow from \eqref{eq:P-infty} and \eqref{eq:505}.

Let us show that \eqref{eq:pt-px-P} holds true. Multiplying \eqref{eq:1554} by $\Ped$, an integration on $\R$ and \eqref{eq:F-in-infty} give
\begin{align*}
2\delta\int_{\R}\ptx\Ped\Ped dx =& \frac{d}{dt} \norm{\Ped(t,\cdot)}^2_{L^2(\R)} -2\gamma\int_{\R}\Fed\Ped dx\\
&+2\int_{\R} f(\ued)\Ped dx -2\eps\int_{\R}\px\ued\Ped dx\\
=&\frac{d}{dt} \norm{\Ped(t,\cdot)}^2_{L^2(\R)}+2\int_{\R} f(\ued)\Ped dx -2\eps\int_{\R}\px\ued\Ped dx.
\end{align*}
An integration on $(0,t)$ gives
\begin{align*}
2\delta\int_{0}^{t}\!\!\!\int_{\R}\ptx\Ped\Ped dx =&\norm{\Ped(t,\cdot)}^2_{L^2(\R)}-\norm{P_{\eps,\delta,0}}^2_{L^2(\R)}\\
&+2\int_{0}^{t}\!\!\!\int_{\R}f(\ued)\Ped dx -2\eps \int_{0}^{t}\!\!\!\int_{\R} \px\ued\Ped dx.
\end{align*}
It follows from \eqref{eq:assflux1}, \eqref{eq:l2-u}, \eqref{eq:P-infty} and \eqref{eq:l2P} that
\begin{align*}
2\delta\left\vert\int_{0}^{t}\!\!\!\int_{\R}\ptx\Ped\Ped dsdx\right\vert \le&\norm{\Ped(t,\cdot)}^2_{L^2(\R)}+\norm{P_{\eps,\delta,0}}^2_{L^2(\R)}\\
&+2\int_{0}^{t}\!\!\!\int_{\R}\vert f(\ued)\vert\vert\Ped\vert dsdx\\
& +2\eps \int_{0}^{t}\!\!\!\int_{\R} \vert\px\ued\vert\vert\Ped\vert dsdx\\
\le & \norm{P_{\eps,\delta,0}}^2_{L^2(\R)}+ 2C(T)\int_{0}^{t}\!\!\!\int_{\R}\ued^2 dsdx\\
 &+2\eps \int_{0}^{t}\!\!\!\int_{\R} \vert\px\ued\vert\vert\Ped\vert dsdx +C(T)\\
\le & \norm{P_{\eps,\delta,0}}^2_{L^2(\R)}  +C(T)\\
&+2\eps \int_{0}^{t}\!\!\!\int_{\R} \vert\px\ued\vert\vert\Ped\vert dsdx.
\end{align*}
Observe that, thanks to \eqref{eq:l2-u},
\begin{equation}
\label{eq:pxU2}
\begin{split}
&\eps\int_{0}^{t}\norm{\px\ued(s,\cdot)}^2_{L^2(\R)}ds\\
&\quad \le \eps e^{2\gamma t}\int_{0}^{t}e^{-2\gamma s}\norm{\px\ued(s,\cdot)}^2_{L^2(\R)}ds\le C(T).
\end{split}
\end{equation}
Due to Young inequality,
\begin{equation}
\label{eq:young}
\begin{split}
&2\eps\int_{\R} \vert\px\ued\vert\vert\Ped\vert dsdx\\
&\quad\le \eps\norm{\Ped(t,\cdot)}^2_{L^2(\R)}+\eps\norm{\px\ued(t,\cdot)}^2_{L^2(\R)}\\
& \quad \le C(T) +\eps\norm{\px\ued(t,\cdot)}^2_{L^2(\R)}.
\end{split}
\end{equation}
Then, for \eqref{eq:pxU2} and  \eqref{eq:young}, we have that
\begin{align*}
&2\eps\int_{0}^{t}\!\!\!\int_{\R}\vert\Ped\vert\vert\px\ued\vert dsdx\\
&\quad \le \eps\int_{0}^{t}\norm{\Ped(s,\cdot)}^2_{L^2(\R)} ds +  \eps\int_{0}^{t}\norm{\px\ued(s,\cdot)}^2_{L^2(\R)} ds\le C(T)  .
\end{align*}
Therefore,
\begin{equation*}
2\delta\left\vert\int_{0}^{t}\!\!\!\int_{\R}\Ped\ptx\Ped dsdx\right\vert\le \norm{P_{\eps,0}}^2_{L^2(\R)}+C(T),
\end{equation*}
which gives \eqref{eq:pt-px-P}.
\end{proof}
\begin{lemma}
\label{lm:linfty-u}
Let $T>0$. Then,
\begin{equation}
\label{eq:linfty-u}
\norm{\ued}_{L^\infty(I_{T,1})}\le \norm{u_{\eps,0}}_{L^\infty(\R)}+ C(T),
\end{equation}
where $I_{T,1}$ is defined in \eqref{eq:defI}.
\end{lemma}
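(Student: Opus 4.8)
The plan is to treat the first equation of \eqref{eq:OHepsw1} as a (quasilinear) parabolic equation for $\ued$ in which the nonlocal term $\gamma\Ped$ is regarded as a \emph{given} bounded forcing, and then to compare $\ued$ with an explicit spatially constant barrier. Writing $\px f(\ued)=f'(\ued)\px\ued$, the equation reads
\begin{equation*}
\pt\ued+f'(\ued)\px\ued-\eps\pxx\ued=\gamma\Ped,
\end{equation*}
a second order parabolic equation whose coefficient $f'(\ued)$ is smooth and, for each fixed $\delta$, bounded (recall that $\ued$ is a classical solution lying in $C((0,T);H^\ell(\R))$, $\ell>2$). The decisive point is that the right-hand side has already been estimated in Lemma \ref{lm:P-infty}: by \eqref{eq:P-infty} one has $\norm{\Ped}_{L^\infty(I_{T,1})}\le C(T)$, so that $\abs{\gamma\Ped}\le\abs{\gamma}\,C(T)$ throughout $I_{T,1}$, with $C(T)$ independent of $\delta$.

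Next I would introduce the barrier
\begin{equation*}
\bar w(t)=\norm{u_{\eps,0}}_{L^\infty(\R)}+\abs{\gamma}\,C(T)\,t,
\end{equation*}
which is constant in $x$. Since $\px\bar w=\pxx\bar w=0$, the difference $v=\ued-\bar w$ satisfies the \emph{linear} parabolic inequality
\begin{equation*}
\pt v+f'(\ued)\px v-\eps\pxx v=\gamma\Ped-\abs{\gamma}\,C(T)\le 0,
\end{equation*}
because $f'(\ued)\px\ued=f'(\ued)\px v$. Moreover $v(0,\cdot)=u_{\eps,\delta,0}-\norm{u_{\eps,0}}_{L^\infty(\R)}\le 0$ by \eqref{eq:u0eps}. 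The maximum principle then gives $v\le 0$, that is $\ued\le\bar w$ on $I_{T,1}$. Repeating the argument with the subsolution $-\bar w(t)$ produces the matching lower bound $\ued\ge-\bar w$, and combining the two yields
\begin{equation*}
\norm{\ued}_{L^\infty(I_{T,1})}\le\norm{u_{\eps,0}}_{L^\infty(\R)}+\abs{\gamma}\,C(T)\,T,
\end{equation*}
which is \eqref{eq:linfty-u} once $\abs{\gamma}\,T$ is absorbed into $C(T)$.

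The one step requiring genuine care is the justification of the maximum principle on the unbounded spatial domain $\R$, where a priori the extremum could escape to $\abs{x}\to\infty$. This is ruled out by the decay of $\ued$: since $\ued(t,\cdot)\in H^\ell(\R)$ with $\ell>2$, Sobolev embedding gives $\ued(t,x)\to 0$ as $\abs{x}\to\infty$, so $v\to-\bar w(t)<0$ at spatial infinity and any positive supremum of $v$ must be attained at a finite point, where the classical parabolic argument applies. Thus the only substantive input beyond this standard comparison is the $\delta$-uniform $L^\infty$ bound on $\Ped$ supplied by Lemma \ref{lm:P-infty}; everything else is the maximum principle for a uniformly parabolic operator.
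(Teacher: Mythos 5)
Your proposal is correct and follows essentially the same route as the paper: both use the $\delta$-uniform bound $\norm{\Ped}_{L^\infty(I_{T,1})}\le C(T)$ from Lemma \ref{lm:P-infty} to reduce the equation to a parabolic inequality with bounded forcing, and then compare $\ued$ with the spatially constant barrier $\norm{u_{\eps,0}}_{L^\infty(\R)}+\gamma C(T)t$ via the parabolic comparison principle. Your only addition is the explicit justification of the maximum principle on the unbounded domain through the Sobolev decay of $\ued(t,\cdot)\in H^\ell(\R)$, a point the paper leaves implicit.
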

\begin{proof}
Due to \eqref{eq:OHepsw1} and \eqref{eq:P-infty},
\begin{equation*}
\pt \ued +\px f(\ued)-\eps\pxx \ued\le \gamma C(T).
\end{equation*}
Since the map
\begin{equation*}
{\mathcal F}(t):=\norm{u_{\eps,0}}_{L^\infty(\R)}+\gamma C(T)t,
\end{equation*}
solves the equation
\begin{equation*}
\frac{d{\mathcal F}}{dt}=\gamma C(T)
\end{equation*}
and
\begin{equation*}
\max\{\ued(0,x),0\}\le {\mathcal F}(t),\qquad (t,x)\in I_{T,1},
\end{equation*}
the comparison principle for parabolic equations implies that
\begin{equation*}
 \ued(t,x)\le {\mathcal F}(t),\qquad (t,x)\in I_{T,1}.
\end{equation*}
In a similar way we can prove that
\begin{equation*}
\ued(t,x)\ge -{\mathcal F}(t),\qquad (t,x)\in I_{T,1}.
\end{equation*}
Therefore,
\begin{equation*}
\vert\ued(t,x)\vert\le\norm{u_{\eps,0}}_{L^\infty(\R)}+\gamma C(T)t\le\norm{u_{\eps,0}}_{L^\infty(\R)}+ C(T),
\end{equation*}
which gives \eqref{eq:linfty-u}.
\end{proof}

\begin{lemma}\label{lm:34}
Let $T>0$ and $0<\delta<1$. We have that
\begin{equation}
\label{eq:012}
\eps\norm{\px\ued(t,\cdot)}^2_{L^2(\R)} + \eps^2\int_{0}^{t}\norm{\pxx\ued(s,\cdot)}^2_{L^2(\R)}ds\le C(T).
\end{equation}
\end{lemma}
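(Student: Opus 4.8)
The plan is to run the classical parabolic energy estimate for $\px\ued$ using the multiplier $-\pxx\ued$ on the first equation of \eqref{eq:OHepsw1}. First I would multiply $\pt\ued+\px f(\ued)-\gamma\Ped-\eps\pxx\ued=0$ by $-\pxx\ued$ and integrate over $\R$, exploiting the decay of $\ued$ and its derivatives at $\pm\infty$ (guaranteed by the smoothness of $\ued$ and Lemma \ref{lm:cns1}) to discard all boundary terms. The time term integrates by parts to $\frac{1}{2}\frac{d}{dt}\norm{\px\ued(t,\cdot)}^2_{L^2(\R)}$, the viscous term produces the good dissipation $\eps\norm{\pxx\ued(t,\cdot)}^2_{L^2(\R)}$, and one is left to control the flux term $-\int_\R f'(\ued)\px\ued\pxx\ued\,dx$ and the nonlocal source $\gamma\int_\R\Ped\pxx\ued\,dx$.

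The crucial point---and where I expect the main difficulty to lie---is the treatment of the flux term. Rather than integrating by parts once more (which would create the cubic, sign-indefinite quantity $\int_\R f''(\ued)(\px\ued)^3\,dx$ and force a super-linear, potentially blow-up-prone Gronwall inequality), I would keep it as is and estimate it directly. Here the subquadratic hypothesis \eqref{eq:assflux1} is decisive: since $\abs{f'(\ued)}\le C_0\abs{\ued}\le C_0\norm{\ued}_{L^\infty(I_{T,1})}\le C(T)$ by Lemma \ref{lm:linfty-u}, the Cauchy--Schwarz inequality gives $\abs{\int_\R f'(\ued)\px\ued\pxx\ued\,dx}\le C(T)\norm{\px\ued(t,\cdot)}_{L^2(\R)}\norm{\pxx\ued(t,\cdot)}_{L^2(\R)}$, and a Young inequality then absorbs a fraction of $\eps\norm{\pxx\ued(t,\cdot)}^2_{L^2(\R)}$ at the cost of a term $\tfrac{C(T)}{\eps}\norm{\px\ued(t,\cdot)}^2_{L^2(\R)}$. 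The source term is handled similarly: integrating by parts it equals $-\gamma\int_\R\px\Ped\px\ued\,dx$, which by Young and the bound $\norm{\px\Ped(t,\cdot)}_{L^2(\R)}\le C(T)$ from \eqref{eq:h2-P} (or \eqref{eq:l2P}) is at most $\tfrac{\gamma}{2}\norm{\px\ued(t,\cdot)}^2_{L^2(\R)}+C(T)$.

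Collecting these estimates, I expect to arrive at a genuinely \emph{linear} differential inequality of the form
\[
\frac{d}{dt}\norm{\px\ued(t,\cdot)}^2_{L^2(\R)}+\eps\norm{\pxx\ued(t,\cdot)}^2_{L^2(\R)}\le C(T)\norm{\px\ued(t,\cdot)}^2_{L^2(\R)}+C(T),
\]
with $C(T)$ depending on $\eps$ but not on $\delta$. Applying the Gronwall Lemma, together with the initial bound $\eps\norm{\px u_{\eps,\delta,0}}_{L^2(\R)}\le C_0$ from \eqref{eq:u0eps}, yields $\norm{\px\ued(t,\cdot)}^2_{L^2(\R)}\le C(T)$, and multiplying by $\eps$ gives the first term of \eqref{eq:012}. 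Finally, integrating the same differential inequality over $(0,t)$ and inserting the bound just obtained controls $\eps\int_0^t\norm{\pxx\ued(s,\cdot)}^2_{L^2(\R)}\,ds\le C(T)$, and one further factor of $\eps$ produces the second term of \eqref{eq:012}.
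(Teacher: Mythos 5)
Your proposal is correct and follows essentially the same route as the paper: the paper likewise multiplies the first equation of \eqref{eq:OHepsw1} by $-\eps\pxx\ued$ and, exactly as you anticipate, avoids the cubic term by estimating $-2\eps\int_\R f'(\ued)\px\ued\,\pxx\ued\,dx$ directly via the $L^\infty$ bound of Lemma \ref{lm:linfty-u} and Young's inequality, absorbing a fraction of $\eps^2\norm{\pxx\ued(t,\cdot)}^2_{L^2(\R)}$. The only cosmetic differences are that the paper bounds the nonlocal source without integrating by parts, using $\norm{\Ped(t,\cdot)}_{L^2(\R)}\le C(T)$ from \eqref{eq:l2P} instead of your $\norm{\px\Ped(t,\cdot)}_{L^2(\R)}$ bound from \eqref{eq:h2-P}, and closes the estimate not by Gronwall but by inserting the already-available ($\eps$-dependent) dissipation bound \eqref{eq:pxU2}, a consequence of Lemma \ref{lm:l2-u}, into the integrated inequality \eqref{eq:001}.
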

\begin{proof}
Let $0<t<T$. Multiplying \eqref{eq:OHepsw1} by $-\eps\pxx\ued$, we have
\begin{equation}
\label{eq:12346}
\begin{split}
-\eps\pxx\ued\pt\ued &+\eps^2\pxx\ued^2\\
 =&-\gamma\eps\Ped\pxx\ued- \eps f'(\ued)\px\ued\pxx\ued.
\end{split}
\end{equation}
Since
\begin{equation*}
-\eps\int_{\R}\pxx\ued\pt\ued dx=\frac{d}{dt}\left(\frac{\eps}{2}\int_{\R}(\px\ued)^2 \right),
\end{equation*}
integrating \eqref{eq:12346} on $\R$, we get
\begin{align*}
\frac{d}{dt}\left(\eps\int_{\R}(\px\ued)^2 dx\right)&+2\eps^2\int_{\R}(\pxx\ued)^2 dx\\
=& -2\gamma\eps\int_{\R}\Ped\pxx\ued dx \\
&- 2\eps\int_{\R}f'(\ued)\px\ued\pxx\ued dx.
\end{align*}
Due to \eqref{eq:l2-u}, \eqref{eq:l2P}, \eqref{eq:linfty-u} and the Young inequality,
\begin{align*}
&-2\gamma\eps\int_{\R}\Ped\pxx\ued dx\\
&\qquad \le 2\gamma\eps\left\vert\int_{\R}\Ped\pxx\ued dx\right\vert\\
&\qquad \le 2\int_{\R}\left\vert\sqrt{2}\gamma\Ped\right\vert\left\vert\frac{\eps\pxx\ued}{\sqrt{2}}\right\vert dx\\
&\qquad \le 2\gamma^2\norm{\Ped(t,\cdot)}^2_{L^2(\R)} + \frac{\eps^2}{2} \norm{\pxx\ued(t,\cdot)}^2_{L^2(\R)}\\
&\qquad \le C(T)+ \frac{\eps^2}{2}  \norm{\pxx\ued(t,\cdot)}^2_{L^2(\R)},\\
&-2\eps\int_{\R}f'(\ued)\px\ued\pxx\ued dx\\
&\qquad \le 2\eps\left\vert\int_{\R}f'(\ued)\px\ued\pxx\ued dx\right\vert\\
&\qquad \le 2\int_{\R}\left\vert\sqrt{2}f'(\ued)\px\ued\right\vert \left\vert\frac{\eps  \pxx\ued}{\sqrt{2}}\right \vert dx\\
&\qquad \le 2\int_{\R}(f'(\ued))^2(\px\ued^2)+ \frac{\eps^2}{2}\int_{\R}(\pxx\ued)^2 dx\\
&\qquad \le 2\norm{f'}^2_{L^{\infty}(I_{T,2})}\norm{ \px\ued(t,\cdot)}^2_{L^2(\R)}  +  \frac{\eps^2}{2}\norm{\pxx\ued(t,\cdot)}^2_{L^2(\R)},
\end{align*}
where
\begin{equation}
\label{eq:def-I2}
I_{T,2}=\left(-\norm{u_{\eps,0}}_{L^\infty(\R)}-C(T), \norm{u_{\eps,0}}_{L^\infty(\R)}+C(T)\right).
\end{equation}
Therefore,
\begin{align*}
\frac{d}{dt}\left(\eps\norm{\px\ued(t,\cdot)}^2_{L^2(\R)} \right)&+2\eps^2\norm{(\pxx\ued(t,\cdot))}^2_{L^2(\R)}\\
\le & \eps^2  \norm{\pxx\ued(t,\cdot)}^2_{L^2(\R)}+ \norm{f'}^2_{L^{\infty}(I_{T,2})}\norm{ \px\ued(t,\cdot)}^2_{L^2(\R)} + C(T),  \\
\end{align*}
that is
\begin{align*}
\frac{d}{dt}\left(\eps\norm{\px\ued(t,\cdot)}^2_{L^2(\R)} \right)&+\eps^2 \norm{\pxx\ued(t,\cdot)}^2_{L^2(\R)}\\
\le &  \norm{f'}^2_{L^{\infty}(I_{T,2})}\norm{ \px\ued(t,\cdot)}^2_{L^2(\R)}+ C(T).
\end{align*}
An integration on $(0,t)$ and \eqref{eq:u0eps} give
\begin{equation}
\label{eq:001}
\begin{split}
\eps\norm{\px\ued(t,\cdot)}^2_{L^2(\R)} &+ \eps^2\int_{0}^{t}\norm{\pxx\ued(s,\cdot)}^2_{L^2(\R)}ds\\
\le & 2\norm{f'}^2_{L^{\infty}(I_{T,2})}\int_{0}^{t}\norm{ \px\ued(s,\cdot)}^2_{L^2(\R)}ds +C(T).
\end{split}
\end{equation}

\eqref{eq:012} follows from \eqref{eq:pxU2} and \eqref{eq:001}.
\end{proof}
\begin{lemma}\label{lm:37}
Let $T>0$ and $0<\delta<1$. We have that
\begin{equation}
\label{eq:052}
\norm{\px\ued}_{L^{\infty}(I_{T,1})} \le C(T),
\end{equation}
where $I_{T,1}$ is defined in \eqref{eq:defI}. Moreover,
\begin{equation}
\label{eq:055}
\eps\norm{\pxx\ued(t,\cdot)}^2_{L^2(\R)}+\eps^2\int_{0}^{t}\norm{\pxxx\ued(s,\cdot)}^2_{L^2(\R)}ds\le C(T).
\end{equation}

\end{lemma}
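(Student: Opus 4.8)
The two bounds are coupled, so the plan is to establish the second--order energy estimate \eqref{eq:055} first and then to recover the gradient bound \eqref{eq:052} from it by a one--dimensional interpolation inequality. Throughout, the constants $C(T)$ are allowed to depend on $\eps$ (as elsewhere in this section) but must be kept \emph{independent of $\delta$}; this $\delta$--uniformity is the feature to watch. I would use freely the estimates already available: $\norm{\ued}_{L^\infty(I_{T,1})}\le C(T)$ from \eqref{eq:linfty-u}, the weighted first--order estimate $\eps\norm{\px\ued(t,\cdot)}_{L^2(\R)}^2+\eps^2\int_0^t\norm{\pxx\ued(s,\cdot)}_{L^2(\R)}^2\ds\le C(T)$ from \eqref{eq:012}, and the bound $\delta\norm{\pxx\Ped(t,\cdot)}_{L^2(\R)}\le C(T)$ contained in \eqref{eq:h2-P}.

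To obtain \eqref{eq:055} I would multiply the first equation of \eqref{eq:OHepsw1} by $\eps\pxxxx\ued$ and integrate by parts on $\R$, which produces $\frac{\eps}{2}\frac{d}{dt}\norm{\pxx\ued(t,\cdot)}_{L^2(\R)}^2+\eps^2\norm{\pxxx\ued(t,\cdot)}_{L^2(\R)}^2$ on the left and nonlinear plus nonlocal contributions on the right. The delicate point is the source $\gamma\eps\int_\R\Ped\,\pxxxx\ued\dx$: one integration by parts gives $-\gamma\eps\int_\R\px\Ped\,\pxxx\ued\dx$, and substituting $\px\Ped=\ued+\delta\pxx\Ped$ (the second equation of \eqref{eq:OHepsw1} rewritten, cf. \eqref{eq:1550}) splits it into $-\gamma\eps\int_\R\ued\,\pxxx\ued\dx$, which vanishes because $\int_\R\ued\,\pxxx\ued\dx=-\tfrac12\int_\R\px\big((\px\ued)^2\big)\dx=0$, and $-\gamma\eps\delta\int_\R\pxx\Ped\,\pxxx\ued\dx$. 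The latter is controlled by Young's inequality against the dissipation using $\delta^2\norm{\pxx\Ped(t,\cdot)}_{L^2(\R)}^2\le C(T)$. This is precisely the step where retaining the factor $\delta$ (rather than the $\delta^{-1}$ that a naive double integration by parts through $\pxx\Ped=\delta^{-1}(\px\Ped-\ued)$ would produce) is essential, and it is the main structural obstacle to $\delta$--uniformity.

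For the nonlinear term I would integrate by parts once to write $-\eps\int_\R\px f(\ued)\,\pxxxx\ued\dx=\eps\int_\R\pxx f(\ued)\,\pxxx\ued\dx$ and expand $\pxx f(\ued)=f''(\ued)(\px\ued)^2+f'(\ued)\pxx\ued$. The first piece is $\eps\int_\R f''(\ued)(\px\ued)^2\pxxx\ued\dx$, and a further integration by parts turns the second into $-\tfrac{\eps}{2}\int_\R f''(\ued)\,\px\ued\,(\pxx\ued)^2\dx$. Both are estimated by the one--dimensional Gagliardo--Nirenberg inequalities $\norm{\px\ued}_{L^4(\R)}^4\le C\norm{\px\ued}_{L^2(\R)}^3\norm{\pxx\ued}_{L^2(\R)}$ and $\norm{\pxx\ued}_{L^4(\R)}^2\le C\norm{\pxx\ued}_{L^2(\R)}^{3/2}\norm{\pxxx\ued}_{L^2(\R)}^{1/2}$ together with Young's inequality, absorbing the resulting $\norm{\pxxx\ued}_{L^2(\R)}^2$ into $\eps^2\norm{\pxxx\ued}_{L^2(\R)}^2$ and using $\norm{\ued}_{L^\infty}\le C(T)$ to dominate $f'(\ued),f''(\ued)$ and $\eps\norm{\px\ued}_{L^2(\R)}^2\le C(T)$ to control the lower--order factors; the remainders are then at worst linear in $\eps\norm{\pxx\ued}_{L^2(\R)}^2$ plus an inhomogeneous $C(T)$. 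The term $-\tfrac{\eps}{2}\int_\R f''(\ued)\,\px\ued\,(\pxx\ued)^2\dx$ is the gradient--steepening term responsible for shock formation, and it is exactly the one that \emph{cannot} be controlled by a pointwise maximum principle; for fixed $\eps>0$ it is tamed by the viscous dissipation at the price of a Gronwall constant that degenerates as $\eps\to0$, which is harmless here.

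Collecting these bounds yields a differential inequality of the form
\[
\frac{d}{dt}\Big(\eps\norm{\pxx\ued(t,\cdot)}_{L^2(\R)}^2\Big)+\eps^2\norm{\pxxx\ued(t,\cdot)}_{L^2(\R)}^2\le C(T)\,\eps\norm{\pxx\ued(t,\cdot)}_{L^2(\R)}^2+C(T),
\]
with $C(T)$ depending on $\eps$ but not on $\delta$, and the Gronwall Lemma together with the initial bound $\eps\norm{\pxx u_{\eps,\delta,0}}_{L^2(\R)}\le C_0$ from \eqref{eq:u0eps} gives \eqref{eq:055}. Finally I would deduce \eqref{eq:052} by the Agmon inequality $\norm{\px\ued(t,\cdot)}_{L^\infty(\R)}^2\le 2\norm{\px\ued(t,\cdot)}_{L^2(\R)}\norm{\pxx\ued(t,\cdot)}_{L^2(\R)}$: since \eqref{eq:012} bounds $\norm{\px\ued(t,\cdot)}_{L^2(\R)}$ and \eqref{eq:055} bounds $\norm{\pxx\ued(t,\cdot)}_{L^2(\R)}$ (both by $\eps$--dependent, $\delta$--uniform constants), the product is $\le C(T)$, which is \eqref{eq:052}.
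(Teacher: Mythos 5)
Your proof is correct, but it takes a genuinely different route from the paper's at the two points where the argument has to close. For the nonlocal source, the paper simply integrates by parts once and bounds $-2\eps\gamma\int_{\R}\px\Ped\,\pxxx\ued\,dx$ via $\norm{\px\Ped(t,\cdot)}_{L^2(\R)}\le e^{\gamma t}\norm{u_{\eps,0}}_{L^2(\R)}$ from \eqref{eq:h2-P}, so the ``structural obstacle'' you flag is not actually there: the substitution $\px\Ped=\ued+\delta\pxx\Ped$ and the cancellation $\int_{\R}\ued\,\pxxx\ued\,dx=0$ are correct but unnecessary, since \eqref{eq:h2-P} already gives a $\delta$-uniform bound on $\norm{\px\Ped}_{L^2(\R)}$ directly and nobody is forced through $\pxx\Ped=\delta^{-1}(\px\Ped-\ued)$. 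The real divergence is in the quartic term $\eps\int_{\R}f''(\ued)(\px\ued)^2\pxxx\ued\,dx$: you estimate it by Gagliardo--Nirenberg, which closes a self-contained Gronwall inequality for \eqref{eq:055} first and then yields \eqref{eq:052} by Agmon; the paper instead keeps the unknown $\norm{\px\ued}^2_{L^{\infty}(I_{T,1})}$ on the right-hand side (writing $\int(\px\ued)^4\le\norm{\px\ued}^2_{L^{\infty}(I_{T,1})}\norm{\px\ued(t,\cdot)}^2_{L^2(\R)}$), integrates in time using \eqref{eq:012} to absorb $\int_0^t\norm{\pxx\ued}^2_{L^2(\R)}ds$ without any Gronwall step, arrives at the intermediate bound \eqref{eq:045}, and then bootstraps through the same Agmon inequality to the polynomial inequality $\norm{\px\ued}^4_{L^{\infty}(I_{T,1})}-C(T)\norm{\px\ued}^2_{L^{\infty}(I_{T,1})}-C(T)\le0$, extracting \eqref{eq:052} first and \eqref{eq:055} as a corollary. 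Your version buys a cleaner logical order (no implicit quartic bootstrap, the two claims decouple) at the price of interpolation inequalities the paper never invokes; the paper's version uses only H\"older, Young and Agmon, matching the quartic-inequality device it already employed for $\norm{\Ped}_{L^{\infty}(I_{T,1})}$ in Lemma \ref{lm:P-infty}. Both arguments keep all constants independent of $\delta$ (though dependent on $\eps$), which is the property that matters for the compactness step in Lemma \ref{lm:exist}.
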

\begin{proof}
Let $0<t<T$. Multiplying \eqref{eq:OHepsw1} by $\eps\pxxxx\ued$, we have
\begin{equation}
\label{eq:018}
\begin{split}
\eps\pxxxx\ued\pt\ued &- \eps^2\pxxxx\ued\pxx\ued\\
=&+\eps\gamma\Ped\pxxxx\ued-\eps f'(\ued)\px\ued\pxxxx\ued.
\end{split}
\end{equation}
Since
\begin{align*}
\eps\int_{\R}\pxxxx\ued\pt\ued dx =&\frac{d}{dt}\left(\frac{\eps}{2}\int_{\R}(\pxx\ued)^2 dx\right), \\
- \eps^2\int_{\R}\pxxxx\ued\pxx\ued dx =& \eps^2\int_{\R}(\pxxx\ued)^2 dx,\\
\eps\gamma\int_{\R}\Ped\pxxxx\ued dx = &-\eps\gamma\int_{\R}\px\Ped\pxxx\ued dx,\\
-\eps\int_{\R}f'(\ued)\px\ued\pxxxx\ued dx=&\eps\int_{\R}f''(\ued)(\px\ued)^2\pxxx\ued dx\\
& + \eps\int_{\R}f'(\ued)\pxx\ued\pxxx\ued dx,
\end{align*}
integrating \eqref{eq:12346} on $\R$, we get
\begin{align*}
\frac{d}{dt}\left(\eps\int_{\R}(\pxx\ued)^2 dx\right)&+2\eps^2\int_{\R}(\pxxx\ued)^2 dx\\
=& -2\eps\gamma\int_{\R}\px\Ped\pxxx\ued dx\\
&+2\eps\int_{\R}f''(\ued)(\px\ued)^2\pxxx\ued dx\\
&+ 2\eps\int_{\R}f'(\ued)\pxx\ued\pxxx\ued dx.
\end{align*}
Due to \eqref{eq:h2-P}, \eqref{eq:linfty-u}, \eqref{eq:012} and the Young inequality,
\begin{align*}
&-2\eps\gamma\int_{\R}\px\Ped\pxxx\ued dx\\
&\qquad \le 2 \eps\gamma\left\vert \int_{\R}\px\Ped\pxxx\ued dx \right\vert\\
&\qquad \le 2\int_{\R}\left\vert\sqrt{3}\gamma\px\Ped\right\vert\left\vert\frac{\eps\pxxx\ued}{\sqrt{3}}\right\vert dx\\
&\qquad \le 3\gamma^2\norm{\px\Ped(t.\cdot)}^2_{L^{2}(\R)}+ \frac{\eps^2}{3}\norm{\pxxx\ued(t.\cdot)}^2_{L^{2}(\R)}\\
&\qquad \le C(T) + \frac{\eps^2}{3}\norm{\pxxx\ued(t.\cdot)}^2_{L^{2}(\R)},\\
&2\eps\int_{\R}f''(\ued)(\px\ued)^2\pxxx\ued dx\\
&\qquad \le 2\eps\left\vert\int_{\R}f''(\ued)(\px\ued)^2\pxxx\ued dx\right\vert\\
&\qquad \le 2\int_{\R}\left\vert\sqrt{3} f''(\ued)(\px\ued)^2\right\vert\left\vert\frac{\eps\pxxx\ued}{\sqrt{3}}\right \vert dx\\
&\qquad \le 3\int_{\R}(f''(\ued))^2(\px\ued)^4 dx + \frac{\eps^2}{3}\norm{\pxxx\ued(t,\cdot)}^2_{L^2(\R)}\\
&\qquad \le 3\norm{f''}^2_{L^{\infty}(I_{T,2})}\norm{\px\ued}^2_{L^{\infty}(I_{T,1})}\norm{\px\ued(t,\cdot)}^2_{L^2(\R)}+ \frac{\eps^2}{3} \norm{\pxxx\ued(t,\cdot)}^2_{L^2(\R)}\\
&\qquad \le 3\norm{f''}^2_{L^{\infty}(I_{T,2})}C(T)\norm{\px\ued}^2_{L^{\infty}(I_{T,1})}\\
&\qquad\quad+ \frac{\eps^2}{3} \norm{\pxxx\ued(t,\cdot)}^2_{L^2(\R)},\\
& 2\eps\int_{\R}f'(\ued)\pxx\ued\pxxx\ued dx\\
&\qquad \le 2\eps\left\vert\int_{\R}f'(\ued)\pxx\ued\pxxx\ued dx\right\vert\\
&\qquad \le 2\int_{\R} \left\vert\sqrt{3}f'(\ued)\pxx\ued\right\vert\left \vert \frac{\eps\pxxx\ued}{\sqrt{3}}\right \vert dx\\
&\qquad \le 3\int_{\R}(f'(\ued))^2(\pxx\ued)^2 dx + \frac{\eps^2}{3} \norm{\pxxx\ued(t,\cdot)}^2_{L^2(\R)}\\
&\qquad \le 3\norm{f'}^2_{L^{\infty}(I_{T,2})}\norm{\pxx\ued(t,\cdot)}^2_{L^2(\R)}+ \frac{\eps^2}{3}\norm{\pxxx\ued(t,\cdot)}^2_{L^2(\R)},
\end{align*}
where $I_{T,1}$ is defined in \eqref{eq:defI} and $I_{T,2}$ is defined in \eqref{eq:def-I2}. Therefore,
\begin{align*}
\frac{d}{dt}\left(\eps\norm{\pxx\ued(t,\cdot)}^2_{L^2(\R)}\right)&+2\eps^2\norm{\pxxx\ued(t,\cdot)}^2_{L^2(\R)}\\
\le &\eps^2\norm{\pxxx\ued(t.\cdot)}^2_{L^{2}(\R)}\\
&+3\norm{f''}^2_{L^{\infty}(I_{T,2})}C(T)\norm{\px\ued}^2_{L^{\infty}(I_{T,1})}\\
&+ 3\norm{f'}^2_{L^{\infty}(I_{T,2})}\norm{\pxx\ued(t,\cdot)}^2_{L^2(\R)} +C(T),
\end{align*}
that is
\begin{align*}
\frac{d}{dt}\left(\eps\norm{\pxx\ued(t,\cdot)}^2_{L^2(\R)}\right)&+\eps^2\norm{\pxxx\ued(t,\cdot)}^2_{L^2(\R)}\\
\le & C(T)\norm{\px\ued}^2_{L^{\infty}(I_{T,1})} +C(T)\\
& + C(T)\norm{\pxx\ued(t,\cdot)}^2_{L^2(\R)}.
\end{align*}
An integration on $(0,t)$, \eqref{eq:u0eps} and \eqref{eq:012} give
\begin{align*}
\eps\norm{\pxx\ued(t,\cdot)}^2_{L^2(\R)}&+\eps^2\int_{0}^{t}\norm{\pxxx\ued(s,\cdot)}^2_{L^2(\R)}ds\\
\le &\left(C(T)\norm{\px\ued}^2_{L^{\infty}(I_{T,1})} +C(T)\right)\int_{0}^{t}ds\\
& + C(T)\int_{0}^{t}\norm{\pxx\ued(s,\cdot)}^2_{L^2(\R)}ds\\
\le & C(T)\norm{\px\ued}^2_{L^{\infty}(I_{T,1})} +C(T).
\end{align*}
Thus,
\begin{equation}
\label{eq:045}
\begin{split}
\eps\norm{\pxx\ued(t,\cdot)}^2_{L^2(\R)}&+\eps^2\int_{0}^{t}\norm{\pxxx\ued(s,\cdot)}^2_{L^2(\R)}ds\\
 \le& C(T)\left(1+\norm{\px\ued}^2_{L^{\infty}(I_{T,1})}\right).
\end{split}
\end{equation}
Due to \eqref{eq:012}, \eqref{eq:045} and the H\"older inequality,
\begin{align*}
(\px\ued(t,x))^2\le& 2\int_{\R}\vert\px\ued\vert\vert\pxx\ued\vert dx\\
\le &  2\norm{\px\ued(t,\cdot)}_{L^2(\R)} \norm{\pxx\ued(t,\cdot)}_{L^2(\R)}\\
\le & C(T)\sqrt{\left(1+\norm{\px\ued}^2_{L^{\infty}(I_{T,1})}\right)}.
\end{align*}
Then,
\begin{equation*}
\norm{\px\ued}^4_{L^{\infty}(I_{T,1})} -C(T)\norm{\px\ued}^2_{L^{\infty}(I_{T,1})} -C(T) \le 0,
\end{equation*}
which gives \eqref{eq:052}. \\
\eqref{eq:055} follows from \eqref{eq:052} and \eqref{eq:045}.
\end{proof}
Arguing as in \cite{CHK:ParEll}, we obtain the following result
\begin{lemma}\label{lm:38}
Let $T>0$, $\ell >2$  and $0<\delta <1$. For each $t\in (0,T)$,
\begin{equation}
\px^{\ell}\ued(t,\cdot)\in L^2(\R).
\end{equation}
\end{lemma}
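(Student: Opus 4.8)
The plan is to argue by induction on the integer order of differentiation, running at every level the viscous energy scheme already used in Lemmas~\ref{lm:34} and~\ref{lm:37}. Since \eqref{eq:OHepsw1} admits a classical, hence spatially smooth, solution by \cite{CHK:ParEll}, the real content of the statement is the $L^2(\R)$ integrability at spatial infinity of the high order derivatives, which I would obtain as an a priori bound. Concretely, I would prove that for every integer $k\ge 0$ and every $t\in(0,T)$
\begin{equation*}
\eps\norm{\px^{k}\ued(t,\cdot)}^2_{L^2(\R)}+\eps^2\int_0^t\norm{\px^{k+1}\ued(s,\cdot)}^2_{L^2(\R)}\,ds\le C(T),
\end{equation*}
where $C(T)$ is allowed to depend on $\eps$, $\delta$ and $k$. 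The cases $k=0,1,2$ are exactly \eqref{eq:l2-u}, \eqref{eq:012} and \eqref{eq:055}; taking then $k=\ell$ with $\ell>2$ yields $\px^{\ell}\ued(t,\cdot)\in L^2(\R)$.

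For the inductive step I would assume the estimate for all orders up to $k-1$ (so that $k\ge 3$ is the relevant range). Differentiating the second equation of \eqref{eq:OHepsw1} $k$ times, squaring and integrating, one obtains, exactly as in \eqref{eq:equ-L2-stima1}, the elliptic identity
\begin{equation*}
\delta^2\norm{\px^{k+2}\Ped(t,\cdot)}^2_{L^2(\R)}+\norm{\px^{k+1}\Ped(t,\cdot)}^2_{L^2(\R)}=\norm{\px^{k}\ued(t,\cdot)}^2_{L^2(\R)},
\end{equation*}
whence in particular $\norm{\px^{k-1}\Ped(t,\cdot)}_{L^2(\R)}\le\norm{\px^{k-2}\ued(t,\cdot)}_{L^2(\R)}$, finite by the inductive hypothesis. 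Multiplying the first equation of \eqref{eq:OHepsw1} by $(-1)^{k}\eps\px^{2k}\ued$ and integrating by parts on $\R$ (all boundary terms vanishing by decay) then gives the energy identity
\begin{equation*}
\frac{\eps}{2}\frac{d}{dt}\norm{\px^{k}\ued}^2_{L^2(\R)}+\eps^2\norm{\px^{k+1}\ued}^2_{L^2(\R)}=\eps\int_{\R}\px^{k+1}\ued\,\px^{k}(f(\ued))\,dx-\eps\gamma\int_{\R}\px^{k+1}\ued\,\px^{k-1}\Ped\,dx.
\end{equation*}

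Both terms on the right are handled by Young's inequality, each releasing a factor $\tfrac{\eps^2}{4}\norm{\px^{k+1}\ued}^2_{L^2(\R)}$ to be absorbed on the left. For the nonlocal term the remainder is $C\gamma^2\norm{\px^{k-1}\Ped}^2_{L^2(\R)}\le C\norm{\px^{k-2}\ued}^2_{L^2(\R)}\in L^1(0,T)$. For the flux term I would expand, by the Fa\`a di Bruno formula,
\begin{equation*}
\px^{k}(f(\ued))=f'(\ued)\px^{k}\ued+\sum_{m\ge 2}\ \sum_{a_1+\cdots+a_m=k}c_{a}\,f^{(m)}(\ued)\,\px^{a_1}\ued\cdots\px^{a_m}\ued,
\end{equation*}
with $a_i\ge 1$, so that $a_i\le k-1$ in every term of the double sum. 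The top order contribution integrates by parts to $\eps\int\px^{k+1}\ued\,f'(\ued)\px^{k}\ued\,dx=-\tfrac{\eps}{2}\int f''(\ued)\px\ued\,(\px^{k}\ued)^2\,dx$, bounded by $C(T)\eps\norm{\px^{k}\ued}^2_{L^2(\R)}$ via \eqref{eq:linfty-u}, \eqref{eq:052} and \eqref{eq:assflux1} on the range $I_{T,2}$ of \eqref{eq:def-I2}; this is the Gronwall term. Each lower order product is placed in $L^2(\R)$ by keeping its unique highest order factor in $L^2$ and estimating the others in $L^\infty$ through the Agmon inequality $\norm{\px^{a}\ued}^2_{L^\infty(\R)}\le 2\norm{\px^{a}\ued}_{L^2(\R)}\norm{\px^{a+1}\ued}_{L^2(\R)}$ with $a+1\le k-1$, all norms being finite by the inductive hypothesis. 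Collecting everything yields
\begin{equation*}
\frac{d}{dt}\left(\eps\norm{\px^{k}\ued}^2_{L^2(\R)}\right)\le C(T)\,\eps\norm{\px^{k}\ued}^2_{L^2(\R)}+g(t),\qquad g\in L^1(0,T),
\end{equation*}
and the Gronwall Lemma together with \eqref{eq:u0eps} closes the induction.

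The main obstacle is the control of the flux term: one must check that in every product arising from $\px^{k}(f(\ued))$ exactly one factor carries the top order, which is legitimate since two factors of order $k-1$ would already sum to more than $k$ when $k\ge 3$, so that the $L^2$--$L^\infty$ splitting is valid and governed by lower order norms alone. This, together with the appearance of derivatives of $f$ up to order $k$ (so that the smoothness of $f$, not merely $f\in C^2$, is what is used), is precisely the parabolic bootstrap carried out in \cite{CHK:ParEll}.
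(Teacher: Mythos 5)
Your proposal is correct, but it is worth noting that the paper does not actually prove this lemma at all: it simply writes ``Arguing as in \cite{CHK:ParEll}, we obtain the following result,'' delegating the whole bootstrap to the parabolic--elliptic wellposedness machinery of that reference. What you have done is supply, self-containedly, the induction that the citation encodes, and it is the natural continuation of the paper's own scheme: your cases $k=0,1,2$ are exactly \eqref{eq:l2-u}, \eqref{eq:012} and \eqref{eq:055}, your multiplier $(-1)^k\eps\px^{2k}\ued$ generalizes the multipliers $-\eps\pxx\ued$ and $\eps\pxxxx\ued$ of Lemmas \ref{lm:34} and \ref{lm:37}, and your elliptic identity for $\px^{j+1}\Ped$ is the differentiated version of \eqref{eq:equ-L2-stima1}, giving the clean bound $\norm{\px^{k-1}\Ped(t,\cdot)}_{L^2(\R)}\le\norm{\px^{k-2}\ued(t,\cdot)}_{L^2(\R)}$ with no loss of $\delta$. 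The signs in your energy identity check out, the top-order flux term is correctly converted by one integration by parts into the Gronwall term $-\tfrac{\eps}{2}\int f''(\ued)\px\ued(\px^k\ued)^2\,dx$, controlled by \eqref{eq:linfty-u} and \eqref{eq:052}, and your combinatorial observation that the Fa\`a di Bruno products carry at most one factor of high order makes the $L^2$--$L^\infty$ (Agmon) splitting legitimate. You are also right, and it is a useful remark, that derivatives $f^{(m)}$ up to order $k$ appear, so one uses the smoothness of $f$ asserted in the prose before \eqref{eq:assflux1}, not merely $f\in C^2$. What your explicit argument buys over the paper's citation is transparency about exactly which constants degenerate: your bound depends on $\eps$, $\delta$ and $k$, which is all the (purely qualitative) statement of the lemma requires; the paper's later use of the lemma in \eqref{eq:1420}, which implicitly needs uniformity in $\delta$ to pass to the limit, is an issue with the paper, not with your proof.

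Two small points deserve a sentence each in a final write-up. First, closing the Gronwall step at order $k\ge 3$ requires $\px^{k}u_{\eps,\delta,0}\in L^2(\R)$, which \eqref{eq:u0eps} only guarantees up to second order; it does hold for the convolution approximation, since $\px^k(u_{\eps,0}*\rho_\delta)=u_{\eps,0}*\px^k\rho_\delta\in L^2(\R)$, with a constant depending on $\delta$ --- consistent with the dependence you already allow, but it should be said. Second, your phrase ``exactly one factor carries the top order'' should allow ties (e.g.\ the partition $(2,2)$ at $k=4$): when two factors share the maximal order $a_{\max}$, one has $2a_{\max}\le k$, hence $a_{\max}\le k-2$, so one tied factor goes in $L^2$ and the other through Agmon with $a_{\max}+1\le k-1$, and the estimate proceeds unchanged. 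Neither point is a genuine gap.
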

We are in a position to state and prove the following result.
\begin{lemma}\label{lm:exist}
Let $T>0$. Assume \eqref{eq:assflux1}, \eqref{eq:assinit1}, \eqref{eq:L-2Peps0} and \eqref{eq:def-di-Peps0}. Then there exist
\begin{align}
\label{eq:uePe1}
\ue&\in L^{\infty}((0,T)\times\R)\cap C((0,T);H^\ell(\R)),\quad \ell>2,\\
\label{eq:uePe2}
\Pe&\in L^{\infty}((0,T)\times\R)\cap L^{2}((0,T)\times\R),
\end{align}
where $\ue$ is a classic solution of the Cauchy problem of \eqref{eq:OHepsw}.
\end{lemma}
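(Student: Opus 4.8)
The plan is to construct $\ue$ and $\Pe$ as limits, along a sequence $\dk\to 0$, of the classical solutions $\ued,\Ped$ of the regularized problem \eqref{eq:OHepsw1}, using that all the bounds in Lemmas~\ref{lm:l2-u}--\ref{lm:37} are uniform in $\delta\in(0,1)$ (the constants $C(T)$ depend on $\eps$ and $T$, but not on $\delta$, since $\eps$ is held fixed throughout this section). First I would record the estimates that survive the limit: $\{\ued\}_\delta$ is bounded in $L^\infty(I_{T,1})$ with $\px\ued$ bounded in $L^\infty(I_{T,1})$ by \eqref{eq:052}, and, by \eqref{eq:012} and \eqref{eq:055}, it is bounded in $L^\infty((0,T);H^2(\R))\cap L^2((0,T);H^3(\R))$; at the same time $\{\Ped\}_\delta$ is bounded in $L^\infty(I_{T,1})\cap L^\infty((0,T);L^2(\R))$ with $\px\Ped$ bounded in $L^\infty((0,T);L^2(\R))$ by \eqref{eq:P-infty}, \eqref{eq:l2P} and \eqref{eq:h2-P}.

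Next I would control the time derivative. Writing $\pt\ued=\eps\pxx\ued-f'(\ued)\px\ued+\gamma\Ped$ and bounding the right-hand side by means of \eqref{eq:055}, \eqref{eq:052}, \eqref{eq:assflux1} and \eqref{eq:l2P}, one gets that $\{\pt\ued\}_\delta$ is bounded in $L^\infty((0,T);L^2(\R))$. Combined with the uniform $H^2$-bound in space, an Aubin--Lions--Simon compactness argument produces a subsequence $\dk\to 0$ and a function $\ue$ such that $\uedk\to\ue$ strongly in $C([0,T];H^{s}_\loc(\R))$ for every $s<2$, hence (by the Sobolev embedding on intervals, taking $s>3/2$) $\uedk\to\ue$ and $\px\uedk\to\px\ue$ locally uniformly, while $\pxx\uedk\weak\pxx\ue$ in $L^2$. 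Passing to a further subsequence, $\Pedk\weakstar\Pe$ weak-$\star$ in $L^\infty$ and $\px\Pedk\weak\px\Pe$ in $L^2$; the uniform bounds above are inherited by $\ue$ and $\Pe$.

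It remains to pass to the limit in \eqref{eq:OHepsw1}. The local uniform convergence of $\uedk$, together with the continuity of $f$ and the uniform $L^\infty$ bound, gives $f(\uedk)\to f(\ue)$ and hence $\px f(\uedk)\to\px f(\ue)$ in $\Dp$; the viscous term $\eps\pxx\uedk$, the source $\gamma\Pedk$ and $\pt\uedk$ converge to their natural limits in $\Dp$ by the convergences above. The key point is that the elliptic regularization drops out: for every $\phi\in C^\infty_c$,
\begin{equation*}
\dk\left|\dint\pxx\Pedk\,\phi\dtdx\right|=\dk\left|\dint\Pedk\,\pxx\phi\dtdx\right|\le\dk\,\norm{\Pedk}_{L^2(I_{T,1})}\norm{\pxx\phi}_{L^2(I_{T,1})}\longrightarrow 0
\end{equation*}
by the uniform bound \eqref{eq:l2P}, so that $-\dk\pxx\Pedk+\px\Pedk=\uedk$ passes to $\px\Pe=\ue$ in $\Dp$. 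Thus $(\ue,\Pe)$ is a distributional solution of \eqref{eq:OHepsw}; the initial condition is recovered from $u_{\eps,\delta,0}\to u_{\eps,0}$ (cf.\ \eqref{eq:u0eps}) and the continuity in time, and the full regularity \eqref{eq:uePe1}--\eqref{eq:uePe2} of the classical solution follows by the parabolic smoothing already used for $\ued$ (Lemma~\ref{lm:38}). I expect the main difficulty to lie precisely in the compactness needed for the nonlinear flux $\px f(\ued)$: the uniform-in-$\delta$ control of the second and third $x$-derivatives from Lemmas~\ref{lm:34} and \ref{lm:37} is exactly what feeds the Aubin--Lions step and legitimizes the identification $f(\uedk)\to f(\ue)$, so the whole a priori analysis of this section is what makes this last limit possible.
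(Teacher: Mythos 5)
Your proposal is correct, but it takes a genuinely different route from the paper. The paper never invokes Aubin--Lions: it multiplies the first equation of \eqref{eq:OHepsw1} by $\eta'(\ued)$ for a convex entropy $\eta$, decomposes the right-hand side into the distributions $\CL_{1,\delta}$, $\CL_{2,\delta}$, $\CL_{3,\delta}$, shows (from the same Lemmas \ref{lm:l2-u}--\ref{lm:37} you cite) that $\{\CL_{1,\delta}\}_\delta$ is compact in $H^{-1}$ while $\{\CL_{2,\delta}\}_\delta$ and $\{\CL_{3,\delta}\}_\delta$ are bounded in $L^1$ and $L^1_{\loc}$ respectively, and then applies Murat's lemma and Tartar's compensated compactness to extract $\uedk\to\ue$ a.e.\ and in $L^p_{\loc}$; the derivative estimates are transferred to $\ue$ afterwards by weak lower semicontinuity (the bounds \eqref{eq:H-2-R}), and the identification of $\Pe$ comes from \eqref{eq:00113} together with $\dk\px\Pedk\to0$ in $L^\infty$ (via \eqref{eq:h2-P}) --- essentially the same mechanism you use in distributional form. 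Your strong-compactness route is legitimate precisely because $\eps$ is frozen in this section, so \eqref{eq:012}, \eqref{eq:052} and \eqref{eq:055} are uniform in $\delta$ and yield the $L^\infty(0,T;H^2)$ bound plus the time-derivative bound feeding Aubin--Lions--Simon; it buys stronger convergence (locally uniform convergence of $\uedk$ and $\px\uedk$, making the flux identification trivial), whereas the paper's choice recycles verbatim the entropy-dissipation/compensated-compactness scheme that it must use anyway in the later limit $\eps\to0$ (Lemma \ref{lm:conv-u-1}), where the derivative bounds degenerate and your argument would fail.

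Two small points to tighten. First, your limit identity $\px\Pe=\ue$ in $\Dp$ determines $\Pe$ only up to a function of $t$; add the one-line remark that $\Pe(t,\cdot)\in L^2(\R)$, inherited from \eqref{eq:l2P}, kills any additive constant in $x$ and forces $\Pe(t,x)=\int_{-\infty}^{x}\ue(t,y)\,dy$, which is exactly what the paper proves as \eqref{eq:pxp-u} and what the integro-differential formulation \eqref{eq:OHepswint} requires. Second, Aubin--Lions only gives $C([0,T];H^s_{\loc}(\R))$ with $s<2$, so the stated regularity $\ue\in C((0,T);H^\ell(\R))$, $\ell>2$, does need the transfer of Lemma \ref{lm:38} to the limit (the paper's \eqref{eq:1420}); your closing appeal to parabolic smoothing covers this, but it should be stated as a separate step rather than folded into the compactness argument.
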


\begin{proof}
Let $\eta:\R\to\R$ be any convex $C^2$ entropy function, and
$q:\R\to\R$ be the corresponding entropy
flux defined by $q'=f'\eta'$.
By multiplying the first equation in \eqref{eq:OHepsw1} with
$\eta'(\ue)$ and using the chain rule, we get
\begin{equation*}
    \pt  \eta(\ued)+\px q(\ued)
    =\underbrace{\eps \pxx \eta(\ued)}_{=:\CL_{1,\delta}}
    \, \underbrace{-\eps \eta''(\ued)\left(\px  \ued\right)^2}_{=: \CL_{2,\delta}}
     \, \underbrace{+\gamma\eta'(\ued) \Ped}_{=: \CL_{3,\delta}},
\end{equation*}
where  $\CL_{1,\delta}$, $\CL_{2,\delta}$, $\CL_{3,\delta}$ are distributions.\\

Let us show that
\begin{equation}
\label{eq:compct-H-1}
\textrm{$\{\CL_{1,\delta}\}_\delta$ is compact in $H^{-1}((0,T)\times\R)$, $T>0$}.
\end{equation}
Since
\begin{equation*}
\eps\pxx\eta(\ued)=\px(\eps\eta'(\ued)\px\ued),
\end{equation*}
we have to prove that
\begin{align}
\label{eq:eta12}
&\textrm{$\{\eps\eta'(\ued)\px\ued\}_\delta$ is bounded in $L^2((0,T)\times\R)$, $T>0$},\\
\label{eq:eta13}
&\textrm{$\{\eps\eta''(\ued)(\px\ued)^2+ \eps\eta'(\ued)\pxx\ued\}_\delta$ is bounded in $L^2((0,T)\times\R)$, $T>0$}.
\end{align}
We begin by proving that \eqref{eq:eta12} holds true.
Thanks to Lemmas \ref{lm:l2-u} and \ref{lm:linfty-u},
\begin{align*}
\norm{\eps\eta'(\ued)\px\ued}^2_{L^2((0,T)\times\R)}&\le\eps ^2\norm{\eta'}^2_{L^{\infty}(I_{T,2})}\int_{0}^{T}\norm{\px\ued(s,\cdot)}^2_{L^2(\R)}ds\\
&\le \eps^2\norm{\eta'}^2_{L^{\infty}(I_{T,2})} e^{2\gamma T}\int_{0}^{T}e^{-2\gamma s}\norm{\px\ued(s,\cdot)}^2_{L^2(\R)}ds\\
&\le \frac{\eps}{2}\norm{\eta'}^2_{L^{\infty}(I_{T,2})}e^{2\gamma T}\norm{u_{\eps,0}}^2_{L^2(\R)}\le C(T),
\end{align*}
where $I_{T,2}$ is defined in \eqref{eq:def-I2}.

We claim that
\begin{equation}
\label{eq:eta14}
\textrm{$\{\eps\eta''(\ued)(\px\ued)^2\}_\delta$ is bounded in $L^2((0,T)\times\R)$}.
\end{equation}
Due to Lemmas \ref{lm:l2-u}, \ref{lm:linfty-u}, \ref{lm:37}
\begin{align*}
\norm{\eps\eta''(\ued)(\px\ued)^2}^2_{L^2((0,T)\times\R)}&\le\eps ^2\norm{\eta''}^2_{L^{\infty}(I_{T,2})}\int_{0}^{T}\!\!\!\!\int_{\R}(\px\ued(s,x))^4dsdx\\
&\le \eps^2\norm{\eta''}^2_{L^{\infty}(I_{T,2})}\norm{\px\ued}^2_{L^{\infty}(I_{T,1})}\int_{0}^{T}\norm{\px\ued(s,\cdot)}^2_{L^2(\R)}ds\\
&\le \frac{\eps}{2}\norm{\eta''}^2_{L^{\infty}(I_{T,2})}\norm{\px\ued}^2_{L^{\infty}(I_{T,1})}e^{2\gamma T}\norm{u_{\eps,0}}^2_{L^2(\R)}\le C(T),
\end{align*}
where $I_{T,1}$ is defined in \eqref{eq:defI}.

We claim that
\begin{equation}
\label{eq:eta15}
\textrm{$\{\eps\eta'(\ued)\pxx\ued\}_\delta$ is bounded in $L^2((0,T)\times\R)$}.
\end{equation}
Thanks to Lemmas \ref{lm:linfty-u} and \ref{lm:34},
\begin{align*}
\norm{\eps\eta'(\ued)\pxx\ued}^2_{L^2((0,T)\times\R)}&\le\eps ^2\norm{\eta'}^2_{L^{\infty}(I_{T,2})}\int_{0}^{T}\norm{\pxx\ued(s,\cdot)}^2_{L^2(\R)}ds\\
&\le \eps^2\norm{\eta'}^2_{L^{\infty}(I_{T,2})}C(T)\le C(T).
\end{align*}
\eqref{eq:eta14} and \eqref{eq:eta15} give \eqref{eq:eta13}.\\
Therefore, \eqref{eq:compct-H-1} follows from \eqref{eq:eta12} and \eqref{eq:eta13}.

We have that
\begin{equation*}
\textrm{$\{\CL_{2,\delta}\}_{\delta>0}$ is bounded in $L^1((0,T)\times\R)$}.
\end{equation*}
Due to Lemmas \ref{lm:l2-u}, \ref{lm:linfty-u},
\begin{align*}
\norm{\eps\eta''(\ued)(\px\ued)^2}_{L^1((0,T)\times\R)}&\le\eps\norm{\eta''}_{L^{\infty}(I_{T,2})}\int_{0}^{T}\norm{\px\ued(s,\cdot)}^2_{L^2(\R)}ds\\
&\le \eps\norm{\eta'}^2_{L^{\infty}(I_{T,2})} e^{2\gamma T}\int_{0}^{T}e^{-2\gamma s}\norm{\px\ued(s,\cdot)}^2_{L^2(\R)}ds\\
&\le \frac{\norm{\eta'}^2_{L^{\infty}(I_{T,2})}e^{2\gamma T}}{2}\norm{u_{\eps,0}}^2_{L^2(\R)}\le C(T).
\end{align*}
We have that
\begin{equation*}
\textrm{$\{\CL_{3,\delta}\}_{\delta>0}$ is bounded in $L^1_{loc}((0,T)\times\R)$.}
\end{equation*}
Let $K$ be a compact subset of $(0,T)\times\R$. For Lemmas \ref{lm:P-infty} and \ref{lm:linfty-u},
\begin{align*}
\norm{\gamma\eta'(\ued)\Ped}_{L^1(K)}&=\gamma\int_{K}\vert\eta'(\ue)\vert\vert\Pe\vert
dtdx\\
&\leq \gamma
\norm{\eta'}_{L^{\infty}(I_{T,2})}\norm{\Pe}_{L^{\infty}(I_{T,1})}\vert K \vert .
\end{align*}
Therefore, Murat's lemma \cite{Murat:Hneg} implies that
\begin{equation}
\label{eq:GMC1}
    \text{$\left\{  \pt  \eta(\ued)+\px q(\ued)\right\}_{\delta>0}$
    lies in a compact subset of $\Hneg((0,\infty)\times\R)$.}
\end{equation}
The $L^{\infty}$ bound stated in Lemma \ref{lm:linfty-u}, \eqref{eq:GMC1} and the
 Tartar's compensated compactness method \cite{TartarI} give the existence of a subsequence
$\{\uedk\}_{k\in\N}$ and a limit function $\ue\in L^{\infty}((0,T)\times\R)$
such that
\begin{equation}\label{eq:convu}
    \textrm{$\uedk \to \ue$ a.e.~and in $L^{p}_{loc}((0,T)\times\R)$, $1\le p<\infty$}.
\end{equation}
Hence,
\begin{equation}
\label{eq:udelta-to-ueps}
 \textrm{$\uedk \to \ue$  in $L^{\infty}((0,T)\times\R)$}.
\end{equation}
Moreover, for convexity, we have
\begin{equation}
\label{eq:H-2-R}
\begin{split}
\norm{\ue(t,\cdot)}_{L^2(\R)}^2+2\eps e^{2\gamma t}\int_0^t e^{-2\gamma s}\norm{\px \ue(s,\cdot)}^2_{L^2(\R)}ds&\le C(T),\\
\eps\norm{\px\ue(t,\cdot)}^2_{L^2(\R)} + \eps^2\int_{0}^{t}\norm{\pxx\ue(s,\cdot)}^2_{L^2(\R)}ds&\le C(T),\\
\eps\norm{\pxx\ue(t,\cdot)}^2_{L^2(\R)}+\eps^2\int_{0}^{t}\norm{\pxxx\ue(s,\cdot)}^2_{L^2(\R)}ds&\le C(T).
\end{split}
\end{equation}
We need only to observe that
\begin{align*}
&2\eps e^{2\gamma t}\int_0^t e^{-2\gamma s}\norm{\px \ue(s,\cdot)}^2_{L^2(\R)}ds\\
&\quad\le 2\eps e^{2\gamma t}\liminf_{k}\int_0^t e^{-2\gamma s}\norm{\px \uedk(s,\cdot)}^2_{L^2(\R)}ds\le C(T),\\
&\eps^2\int_{0}^{t}\norm{\pxx\ue(s,\cdot)}^2_{L^2(\R)}ds\le \eps^2 \liminf_{k} \int_{0}^{t}\norm{\pxx\uedk(s,\cdot)}^2_{L^2(\R)}ds\le C(T),\\
&\eps^2\int_{0}^{t}\norm{\pxxx\ue(s,\cdot)}^2_{L^2(\R)}ds\le \eps^2\liminf_{k}\int_{0}^{t}\norm{\pxxx\ued(s,\cdot)}^2_{L^2(\R)}ds\le C(T).
\end{align*}
Moreover, it follows from convexity and Lemma \ref{lm:38} that
\begin{equation}
\label{eq:1420}
\px^{\ell}\ue(t,\cdot)\in L^{2}(\R),\quad \ell >2, \quad t\in (0,T).
\end{equation}
Therefore, \eqref{eq:udelta-to-ueps}, \eqref{eq:H-2-R} and \eqref{eq:1420} give \eqref{eq:uePe1}. \eqref{eq:uePe2} follows from Lemma \eqref{lm:P-infty}.

Finally, we prove that
\begin{equation}
\label{eq:pxp-u}
\int_{-\infty}^{x}\ue(t,y)dy=\Pe(t,x), \quad \textrm{a.e.} \quad \textrm{in} \quad (t,x)\in I_{T,1}.
\end{equation}
Integrating the second equation of \eqref{eq:OHepsw1} on $(-\infty,x)$, for \eqref{eq:P-pxP-intfy1}, we have that
\begin{equation}
\label{eq:00113}
\int_{-\infty}^{x}\uedk(t,y)dy= \Pedk(t,x) -\dk\px\Pedk(t,x).
\end{equation}
We show that
\begin{equation}
\label{eq:px-1}
\textrm{$\delta\px\Ped(t,x)\to 0$ in $L^{\infty}(0,T; L^{\infty}(0,\infty))$, $T>0$ as $\delta\to0$.}
\end{equation}
It follows from \eqref{eq:h2-P} that
\begin{equation*}
\delta\norm{\px\Ped}_{L^{\infty}(0,T; L^{\infty}(0,\infty))}\leq \sqrt{\delta}e^{\gamma t}\norm{u_{\eps,0}}_{L^2(\R)} =\sqrt{\delta}C(T)\to 0,
\end{equation*}
that is \eqref{eq:px-1}.\\
Therefore, \eqref{eq:pxp-u} follows from \eqref{eq:uePe1}, \eqref{eq:uePe2}, \eqref{eq:00113} and \eqref{eq:px-1}.
The proof is done.
\end{proof}
\begin{lemma}\label{lm:u-null}
Let $\ue(t,x)$ be a classic solution of \eqref{eq:OHepsw}. Then,
\begin{equation}
\label{eq:con-u}
\int_{\R} \ue(t,x) dx=0, \quad t\ge 0,
\end{equation}
\end{lemma}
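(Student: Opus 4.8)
The plan is to identify $\int_\R \ue(t,x)\,dx$ with the limit of the nonlocal potential $\Pe$ at $+\infty$, and then to show that this limit vanishes because $\Pe(t,\cdot)$ belongs to $H^1(\R)$. By \eqref{eq:pxp-u} we have $\Pe(t,x)=\int_{-\infty}^{x}\ue(t,y)\,dy$ for a.e.\ $(t,x)\in I_{T,1}$; since $\ue(t,\cdot)\in H^\ell(\R)$ with $\ell>2$, the map $x\mapsto\Pe(t,x)$ is continuous, $\Pe(t,-\infty)=0$ by construction, and therefore $\int_\R\ue(t,x)\,dx=\lim_{x\to+\infty}\Pe(t,x)$. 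The whole argument thus reduces to proving that this limit is zero.

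First I would record the two $L^2$ bounds that are already in hand. From Lemma \ref{lm:P-infty} (precisely \eqref{eq:l2P}) together with \eqref{eq:uePe2} we have $\Pe(t,\cdot)\in L^2(\R)$, while the first estimate in \eqref{eq:H-2-R} gives $\px\Pe(t,\cdot)=\ue(t,\cdot)\in L^2(\R)$. Hence $\Pe(t,\cdot)\in H^1(\R)$ for each $t\in(0,T)$. The one-dimensional Sobolev embedding $H^1(\R)\hookrightarrow C_0(\R)$ then forces the continuous representative of $\Pe(t,\cdot)$ to tend to $0$ as $x\to\pm\infty$; as $\Pe(t,\cdot)$ is already continuous, this yields $\lim_{x\to+\infty}\Pe(t,x)=0$.

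Combining the two steps gives $\int_\R\ue(t,x)\,dx=0$ for each $t\in(0,T)$, and then for every $t\ge 0$ by the time-continuity of $\ue$ from \eqref{eq:uePe1} and the compatibility with the initial datum $\int_\R u_{\eps,0}\,dx=0$ in \eqref{eq:assinit1}. This is exactly \eqref{eq:con-u}.

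The only delicate point is the passage from the space--time integrability $\Pe\in L^2((0,T)\times\R)$ to the slicewise membership $\Pe(t,\cdot)\in L^2(\R)$ required to invoke the Sobolev embedding, and this is where I expect the main (though mild) obstacle to sit; it is handled by the uniform-in-$t$ bound \eqref{eq:l2P}, which survives the limit $\delta\to0$ and provides $\norm{\Pe(t,\cdot)}_{L^2(\R)}\le C(T)$ for each $t$. An alternative route would be to integrate the first equation of \eqref{eq:OHepsw} over $\R$, exploit the decay of $\ue$ and $\px\ue$ to annihilate the flux and viscous boundary terms, and obtain $\frac{d}{dt}\int_\R\ue\,dx=\gamma\int_\R\Pe\,dx$; however, that approach still requires establishing $\int_\R\Pe\,dx=0$, which is less immediate than the $H^1$-decay argument above, so I would prefer the direct identification via \eqref{eq:pxp-u}.
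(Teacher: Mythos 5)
Your argument is correct in substance but takes a genuinely different route from the paper, whose proof is a one-line direct computation: differentiating the first equation of \eqref{eq:OHepsw} in $x$ and substituting $\px\Pe=\ue$ eliminates the nonlocal term, giving $\px\left(\pt\ue+\px f(\ue)-\eps\pxx\ue\right)=\gamma\ue$; integrating over $\R$, the left-hand side is an exact $x$-derivative of a quantity vanishing at $\pm\infty$ by the smoothness and decay of $\ue$, so $\gamma\int_\R\ue\,dx=0$ for every $t$. That proof costs nothing beyond the equation itself and, crucially, applies to an \emph{arbitrary} classic solution, which is what the lemma needs: it is later invoked in the uniqueness part of Theorem \ref{th:wellp} for two solutions $\ue$ and $\ve$ with possibly different data. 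Your route instead goes through the a priori machinery: identifying $\int_\R\ue(t,\cdot)\,dx$ with $\Pe(t,+\infty)$ via \eqref{eq:pxp-u}, placing $\Pe(t,\cdot)$ in $H^1(\R)$ via \eqref{eq:l2P} and \eqref{eq:H-2-R}, and concluding from the Sobolev decay $H^1(\R)\hookrightarrow C_0(\R)$. There is no circularity — all the estimates you cite are established before the lemma — but your argument buys less: it is tied to the particular solution constructed as a limit of $\uedk$ (the slicewise bound $\norm{\Pe(t,\cdot)}_{L^2(\R)}\le C(T)$ is inherited from \eqref{eq:l2P} by weak lower semicontinuity at fixed $t$, which is fine but unavailable for a second, arbitrary classic solution such as $\ve$), and your final upgrade to all $t\ge 0$ ``by time-continuity of $\ue$'' is weaker than you suggest, since convergence in $H^{\ell}(\R)$ does not control the mass integral $\int_\R\ue(t,x)\,dx$; you do not actually need that step, because once the slicewise $L^2$ bound holds for every $t\in(0,T)$ your decay argument already yields \eqref{eq:con-u} for every such $t$. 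In short: correct, but heavier and narrower in scope than the paper's elementary integration, which you even sketch (and set aside) as your ``alternative route'' — note that in the paper's version that route does not require $\int_\R\Pe\,dx=0$ at all, since the nonlocal term is removed by the preliminary $x$-differentiation rather than integrated.
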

\begin{proof}
Differentiating \eqref{eq:OHepsw} with respect to $x$, we have
\begin{equation}
\label{eq:diff-oh}
\px(\pt\ue + \px f(\ue) - \eps \pxx \ue)=\gamma \ue.
\end{equation}
Since $\ue$ is a smooth solution of \eqref{eq:OHepsw}, an integration over $\R$ gives \eqref{eq:con-u}.
\end{proof}
We are ready for the proof of Theorem \ref{th:wellp}.
\begin{proof}[Proof of Theorem \ref{th:wellp}]
Lemma \ref{lm:exist} gives the existence of a classic solution $\ue(t,x)$ of \eqref{eq:OHepsw}, or \eqref{eq:OHepswint}.

Let us show that $\ue(t,x)$ is unique and \eqref{eq:l2-stability} holds true.
Let $\ue,\ve$ be two classic solution of \eqref{eq:OHepsw}, or \eqref{eq:OHepswint}, that is
\begin{align*}
& \begin{cases}
\pt\ue+ f'(\ue)\px\ue=\gamma\Pe^{\ue}+\eps\pxx\ue,& t>0,  x\in\R,\\
\px\Pe^{\ue}=\ue, & t>0, x\in\R,\\
\ue(0,x)=u_{\eps, 0}(x),& x\R,
\end{cases}\\
&\begin{cases}
\pt\ve+f'(\ve)\px\ve=\gamma\Pe^{\ve} +\eps\pxx\ve, & t>0, x\in\R,\\
\px\Pe^{\ve}=\ve, & t>0, x\in\R,\\
\ve(0,x)=v_{\eps,0}(x),& x\in\R.
\end{cases}
\end{align*}
Then, the function
\begin{equation}
\label{eq:def-di-omega}
\oeps(t,x)=\ue(t,x)-\ve(t,x)
\end{equation}
is solution of the following Cauchy problem
\begin{equation}
\label{eq:epsw}
\begin{cases}
\pt \oeps+ f'(\ue)\px\ue -f'(\ve)\px\ve =\gamma\Oeps+ \eps\pxx\oeps,&\quad t>0,\ x\in\R,\\
\px\Oeps=\oeps,&\quad t>0,\ x\in\R,\\
\oeps(0,x)=u_{\eps,0}(x) - v_{\eps,0}(x) ,&\quad x\in\R,
\end{cases}
\end{equation}
where
\begin{equation}
\label{eq:def-di-Omega}
\begin{split}
\Oeps(t,x)&=\Pe^{\ue}(t,x)- \Pe^{\ve}(t,x)\\
&=\int_{-\infty}^{x} \ue(t,y)dy - \int_{-\infty}^{x} \ve(t,y)dy\\
&=\int_{-\infty}^{x} (\ue(t,y)-\ve(t,y))dy =\int_{-\infty}^x\oeps(t,y)dy.
\end{split}
\end{equation}
It follows from Lemma \ref{lm:u-null} and \eqref{eq:def-di-Omega} that
\begin{equation}
\label{eq:Omega-in-infty}
\Oeps(t,\infty)= \int_{\R} \ue(t,y)dy - \int_{\R} \ve(t,y)dy=0.
\end{equation}
Observe that, for \eqref{eq:def-di-omega},
\begin{align*}
f'(\ue)\px\ue-f'(\ve)\px\ve &= f'(\ue)\px\ue - f'(\ue)\px\ve + f'(\ue)\px\ve - f'(\ve)\px\ve\\
&=f'(\ue)\px(\ue -\ve) +\px\ve ( f'(\ue)- f'(\ve))\px\ve\\
&=f'(\ue)\px\oeps + ( f'(\ue)- f'(\ve))\px\ve.
\end{align*}
Therefore, the first equation of \eqref{eq:epsw} is equivalent to the following one:
\begin{equation}
\label{eq:epsw1}
\pt \oeps+ f'(\ue)\px\oeps + ( f'(\ue)- f'(\ve))\px\ve =\gamma\Oeps+ \eps\pxx\oeps.
\end{equation}
Moreover, since $\ue$ and $\ve$ are in $L^{\infty}((0,T)\times\R)$, we have
that
\begin{equation}
\label{eq:f1}
\Big\vert f'(\ue(t,x))- f'(\ve(t,x))\Big\vert \leq C(T) \vert \ue(t,x) - \ue(t,x)\vert,\quad (t,x)\in (0,T)\times\R,
\end{equation}
where
\begin{equation}
\label{eq:sup}
C(T)=\sup_{(0,T)\times\R}\Big\{\vert f''(\ue)\vert + \vert f''(\ve)\vert\Big\}.
\end{equation}
Therefore, \eqref{eq:def-di-omega} and \eqref{eq:f1} give
\begin{equation}
\label{eq:f1omega}
\Big\vert f'(\ue(t,x))- f'(\ve(t,x))\Big\vert \leq C(T) \vert\oeps(t,x)\vert,\quad (t,x)\in (0,T)\times\R
\end{equation}
Multiplying \eqref{eq:epsw1} by $\oeps$, and integration on $\R$ gives
\begin{align*}
\frac{d}{dt}\int_{\R} \oeps^2dx=&2\int_{\R} \oeps\pt\oeps dx\\
=&2\eps\int_{\R}\oeps\pxx\oeps dx-2\int_{\R}\oeps f'(\ue)\px\oeps  dx\\
&-2\int_{\R}\oeps( f'(\ue)- f'(\ve))\px\ve dx  +2\gamma\int_{\R}\Oeps\oeps dx\\
=& -2\eps\int_{\R}(\px\oeps)^2 dx +\int_{\R}\oeps^2 f''(\ue)\px\ue dx\\
&-2\int_{\R}\oeps( f'(\ue)- f'(\ve))\px\ve dx  +2\gamma\int_{\R}\Oeps\oeps  dx.
\end{align*}
It follows from the second equation of \eqref{eq:epsw} and Lemma \ref{lm:u-null} that
\begin{equation}
\label{eq:1230}
\begin{split}
&\frac{d}{dt}\norm{\oeps(t,\cdot)}^2_{L^{2}(\R)}+2\eps\norm{\px\oeps(t,\cdot)}^2_{L^{2}(\R)}\\
&\quad\le \int_{\R}\oeps^2 \vert f''(\ue)\vert\vert\px\ue\vert dx + 2\int_{\R}\vert\oeps\vert\vert( f'(\ue)- f'(\ve))\vert\vert\px\ve\vert dx.
\end{split}
\end{equation}
Since $\ue(t,\cdot),\ve(t,\cdot)\in H^{\ell}(\R),\ell >2$, for each $t\in (0,T)$, then
\begin{equation}
\label{eq:1231}
\px\ue(t,\cdot),\px\ve(t,\cdot)\in H^{\ell-1}(\R)\subset L^{\infty}(\R), \quad t\in (0,T).
\end{equation}
Therefore, thanks to \eqref{eq:f1}, \eqref{eq:sup}, \eqref{eq:1230} and \eqref{eq:1231},
\begin{align*}
\frac{d}{dt}\norm{\oeps(t,\cdot)}^2_{L^{2}(\R)}+2\eps\norm{\px\oeps(t,\cdot)}^2_{L^{2}(\R)}\le C(T)\norm{\oeps(t,\cdot)}^2_{L^{2}(\R)}.
\end{align*}
The Gronwall Lemma gives
\begin{equation}
\label{eq:gro1}
\norm{\oeps(t,\cdot)}^2_{L^{2}(\R)}+2\eps e^{C(T)t}\int_{0}^{s}e^{-C(T)s} \norm{\px\oeps(s,\cdot)}^2_{L^{2}(\R)} ds \le e^{C(T)t}\norm{\omega_{\eps,0}}^2_{L^2(\R)}.
\end{equation}
Hence, \eqref{eq:l2-stability} follows from \eqref{eq:def-di-omega}, \eqref{eq:epsw} and \eqref{eq:gro1}.
\end{proof}

\section{Existence of  entropy solutions for Ostrovshy-Hunter Equation}\label{sec:Es}
This section is devoted to the existence of entropy solutions for \eqref{eq:OHw-u}, or \eqref{eq:OHw}.

Fix a small number $\eps>0$, and let $\ue=\ue (t,x)$ be the unique classical solution of \eqref{eq:OHepsw}, where  $u_{\eps,0}$ is a $C^\infty(\R)$ approximation of $u_{0}$ such that
\begin{equation}
\begin{split}
\label{eq:u0eps1}
&\norm{u_{\eps,0}}_{L^2(\R)}\le \norm{u_0}_{L^2(\R)},\quad \norm{u_{\eps,0}}_{L^\infty(\R)}\le \norm{u_0}_{L^\infty(\R)},\\
&\int_{\R} u_{\eps,0}(x) dx =0, \quad\int_{\R}\left(\int_{-\infty}^{x}u_{\eps,0}(y) dy\right)^2 dx  \le \norm{P_0}^2_{L^2(\R)},\\
&\int_{\R}\left(\int_{-\infty}^{x}u_{\eps,0}(y) dy\right)dx=\int_{\R}P_{\eps,0}(x)dx =0,
\end{split}
\end{equation}
where
\begin{equation}
\label{eq:def-di-peps0}
P_{\eps,0}(x)=\int_{-\infty}^{x}u_{\eps,0}(y) dy,
\end{equation}
and $\displaystyle \norm{P_0}_{L^2(\R)}$ is defined in \eqref{eq:L-2P0}.

Let us prove some a priori estimates on $\ue$ and $\Pe$, denoting with $C(T)$ the constants which depend on $T$, but independent on $\eps$.

Following \cite[Lemma $6$]{CdK}, or \cite[Lemma $2.3.1$]{dR}, we show this result.
\begin{lemma}\label{lm:cns2}
Let us suppose that, for each $t\ge 0$,
\begin{equation}
\label{eq:P-int-1}
\textrm{$\Pe(t,x)$ is integrable at $-\infty$, (or  at $+\infty$)},
\end{equation}
where $\Pe(t,x)$ is defined in \eqref{eq:OHepsw}. Then, the following statements are equivalent:
\begin{align}
\label{eq:con-u-1}
\int_{\R} \ue(t,x) dx =&0,\\
\label{eq:stima-l-2-ueps}
\norm{\ue(t,\cdot)}^2_{L^2(\R)} + 2\eps\int_{0}^{t}\norm{\px\ue(s,\cdot)}^2_{L^2(\R)} ds=&\norm{u_{\eps,0}}^2_{L^2(\R)},\\
\label{eq:Pmedianulla1}
\int_{\R} \Pe(t,x) dx =&0,\\
\label{eq:stima-l-2-pesp}
\norm{\Pe(t,\cdot)}^2_{L^2(\R)}+2\eps\norm{\ue(t,\cdot)}^2_{L^2(\R)}=& \norm{P_{\eps,0}}^2_{L^2(\R)}-2\int_{0}^{t}\!\!\int_{\R}\Pe f(\ue)dsdx,
\end{align}
for every $t\ge 0$.
\end{lemma}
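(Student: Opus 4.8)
The plan is to reduce the four statements to two energy identities and one mass relation, all of which hold (given \eqref{eq:P-int-1}) as differential identities in $t$; the three natural pairings of the statements then follow by integrating or differentiating in time. Throughout I would use that $\ue(t,\cdot)\in H^\ell(\R)$, $\ell>2$, so that $\ue,\px\ue,\pxx\ue$ are continuous and tend to $0$ as $x\to\pm\infty$, that $\Pe(t,-\infty)=0$ and $\Pe(t,\infty)=\int_\R\ue(t,x)\dx$ because $\Pe(t,x)=\int_{-\infty}^x\ue(t,y)\dy$, and that \eqref{eq:P-int-1} lets me set $\Fe(t,x)=\int_{-\infty}^x\Pe(t,y)\dy$ with $\Fe(t,-\infty)=0$ and $\Fe(t,\infty)=\int_\R\Pe(t,x)\dx$.

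First I would record the mass relation. Integrating the first equation of \eqref{eq:OHepsw} over $\R$, the boundary contributions $[f(\ue)]_{-\infty}^{+\infty}$ and $\eps[\px\ue]_{-\infty}^{+\infty}$ vanish, leaving
\[ \frac{d}{dt}\int_\R\ue\dx=\gamma\int_\R\Pe\dx, \]
together with $\int_\R\ue(0,x)\dx=\int_\R u_{\eps,0}(x)\dx=0$ by \eqref{eq:u0eps1}. Since $\gamma\neq0$, this single first-order relation already yields \eqref{eq:con-u-1}$\Leftrightarrow$\eqref{eq:Pmedianulla1}: if $\int_\R\ue\dx\equiv0$ its time derivative vanishes, forcing $\int_\R\Pe\dx\equiv0$; conversely if $\int_\R\Pe\dx\equiv0$ then $\int_\R\ue\dx$ is constant, hence equal to its vanishing initial value.

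Next I would derive the two energy identities. Multiplying the first equation of \eqref{eq:OHepsw} by $\ue$ and integrating, the convective term $\int_\R\ue f'(\ue)\px\ue\dx$ is the integral of an exact $x$-derivative and vanishes, the viscous term gives $-\eps\norm{\px\ue(t,\cdot)}^2_{L^2(\R)}$, and the source gives $\gamma\int_\R\ue\Pe\dx=\tfrac{\gamma}{2}\big(\int_\R\ue\dx\big)^2$, so that
\[ \frac{d}{dt}\norm{\ue(t,\cdot)}^2_{L^2(\R)}+2\eps\norm{\px\ue(t,\cdot)}^2_{L^2(\R)}=\gamma\Big(\int_\R\ue\dx\Big)^2 . \]
Integrating in time gives \eqref{eq:con-u-1}$\Rightarrow$\eqref{eq:stima-l-2-ueps}, and differentiating \eqref{eq:stima-l-2-ueps} and comparing gives the converse. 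For the second identity I would integrate the first equation of \eqref{eq:OHepsw} on $(-\infty,x)$ to obtain $\pt\Pe=-(f(\ue)-f(0))+\gamma\Fe+\eps\px\ue$, multiply by $\Pe$ and integrate; using $\int_\R\Pe\px\ue\dx=-\norm{\ue(t,\cdot)}^2_{L^2(\R)}$ and $\gamma\int_\R\Pe\Fe\dx=\tfrac{\gamma}{2}\big(\int_\R\Pe\dx\big)^2$ this produces
\[ \frac{d}{dt}\norm{\Pe(t,\cdot)}^2_{L^2(\R)}+2\eps\norm{\ue(t,\cdot)}^2_{L^2(\R)}=\gamma\Big(\int_\R\Pe\dx\Big)^2+2f(0)\int_\R\Pe\dx-2\int_\R\Pe f(\ue)\dx . \]
Under \eqref{eq:Pmedianulla1} the first two terms on the right drop out and integration in time yields \eqref{eq:stima-l-2-pesp}; conversely, differentiating \eqref{eq:stima-l-2-pesp} gives $\int_\R\Pe\dx\,\big(\gamma\int_\R\Pe\dx+2f(0)\big)=0$ for all $t$, and continuity of $t\mapsto\int_\R\Pe\dx$ with vanishing initial value excludes the constant root $-2f(0)/\gamma$, forcing \eqref{eq:Pmedianulla1}. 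Chaining \eqref{eq:stima-l-2-ueps}$\Leftrightarrow$\eqref{eq:con-u-1}$\Leftrightarrow$\eqref{eq:Pmedianulla1}$\Leftrightarrow$\eqref{eq:stima-l-2-pesp} then proves the lemma.

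The hard part is the rigorous justification of every boundary term at $x=\pm\infty$ and, above all, the correct interpretation of \eqref{eq:P-int-1}. The $H^\ell$-regularity of $\ue$ disposes of the decay of $\ue$ and its derivatives, but the source and the $\Fe$-terms require $\Pe(t,\cdot)$ to be genuinely integrable: since $\Pe(t,x)\to\int_\R\ue\dx$ as $x\to+\infty$, integrability at $+\infty$ already forces \eqref{eq:con-u-1}, whereas integrability at $-\infty$ is a true decay condition that must be shown to make $\Fe(t,\infty)=\int_\R\Pe\dx$ finite. Once this point is settled, the identities above follow by routine integration by parts and the equivalences are immediate.
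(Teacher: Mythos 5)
Your proof is correct and, for the most part, follows the same route as the paper: the same three pairwise equivalences, built on the same two energy identities $\frac{d}{dt}\norm{\ue(t,\cdot)}^2_{L^2(\R)}+2\eps\norm{\px\ue(t,\cdot)}^2_{L^2(\R)}=\gamma\big(\int_\R\ue\,dx\big)^2$ and $\frac{d}{dt}\norm{\Pe(t,\cdot)}^2_{L^2(\R)}+2\eps\norm{\ue(t,\cdot)}^2_{L^2(\R)}=\gamma\big(\int_\R\Pe\,dx\big)^2+2f(0)\int_\R\Pe\,dx-2\int_\R\Pe f(\ue)\,dx$, and on the same mass relation: the paper obtains $\frac{d}{dt}\int_\R\ue\,dx=\gamma\int_\R\Pe\,dx$ by integrating the equation on $(-\infty,x)$ and sending $x\to\infty$ in $\pt\Pe+f(\ue)-f(0)-\eps\px\ue=\gamma\Fe$, which is your direct integration over $\R$ in primitive form; this formulation also settles the finiteness issue you flag at the end, since the limit $\Fe(t,\infty)=\int_\R\Pe\,dx$ exists because the other three terms in the primitive equation converge as $x\to\infty$. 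Where you genuinely depart from the paper is in the two converse implications \eqref{eq:stima-l-2-ueps}$\Rightarrow$\eqref{eq:con-u-1} and \eqref{eq:stima-l-2-pesp}$\Rightarrow$\eqref{eq:Pmedianulla1}: the paper argues by contradiction, and in the second case it asserts both $\gamma\Fe^2(t,\infty)\neq0$ and $f(0)\int_\R\Pe\,dx\neq0$ --- the latter claim fails outright when $f(0)=0$, and even when $f(0)\neq0$ the two nonzero terms could in principle cancel. Your alternative --- differentiate the assumed identity in $t$ to get $\int_\R\Pe\,dx\,\big(\gamma\int_\R\Pe\,dx+2f(0)\big)=0$ at every time, then use continuity of $t\mapsto\int_\R\Pe\,dx$ together with the zero initial value from \eqref{eq:u0eps1} to exclude the spurious root $-2f(0)/\gamma$ --- is tighter and repairs this defect (and when $f(0)=0$ the two roots coincide, so the conclusion is immediate); the analogous differentiation argument for \eqref{eq:stima-l-2-ueps}$\Rightarrow$\eqref{eq:con-u-1}, giving $\gamma\big(\int_\R\ue\,dx\big)^2=0$ directly, is likewise cleaner than the paper's contradiction. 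Incidentally, your version of the second energy identity also corrects two harmless slips in the paper's display \eqref{eq:300} (the missing factor $2$ on $f(0)\int_\R\Pe\,dx$, and the sign of $\eps\px\ue$ in \eqref{eq:equaP}), neither of which affects the paper's conclusion since those terms vanish under \eqref{eq:Pmedianulla1}.
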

\begin{proof}
Let $t> 0$. We begin by proving that \eqref{eq:con-u-1} implies \eqref{eq:stima-l-2-ueps}. \\
Multiplying  \eqref{eq:OHepswint} by $\ue$, an integration on $\R$ gives
\begin{align*}
\frac{1}{2}\frac{d}{dt}\int_{\R} \ue^2dx=&\int_{\R} \ue\pt\ue dx\\
=&\eps\int_{\R}\ue\pxx\ue dx-\int_{\R} \ue f'(\ue)\px\ue dx+\gamma\int_{\R}\ue\Big(\int_{-\infty}^{x}\ue dy\Big)dx\\
= & - \eps\int_{\R} (\px\ue)^2 dx + \gamma\int_{\R}\ue\Big(\int_{-\infty}^{x}\ue dy\Big)dx.
\end{align*}
For \eqref{eq:OHepsw},
\begin{equation*}
\int_{\R}\ue\Big(\int_{-\infty}^{x}\ue
dy\Big)dx=\int_{\R}\Pe(t,x)\px\Pe(t,x)dx=\frac{1}{2}\Pe ^2(t,\infty).
\end{equation*}
Then,
\begin{equation}
\label{eq:121}
\frac{d}{dt}\int_{\R} \ue^2dx + 2\eps\int_{\R} (\px\ue)^2 dx=\gamma\Pe ^2(t,\infty).
\end{equation}
Thanks to \eqref{eq:con-u-1},
\begin{equation}
\label{eq:122}
\lim_{x\to\infty}\Pe ^2(t,x)= \left(\int_{\R}\ue(t,x)dx\right)^2=0.
\end{equation}
\eqref{eq:121}, \eqref{eq:122} and an integration on $(0,t)$  give \eqref{eq:stima-l-2-ueps}.

Let us show that \eqref{eq:stima-l-2-ueps} implies \eqref{eq:con-u-1}.
We assume by contradiction that \eqref{eq:con-u-1} does not hold, namely:
\begin{equation}
\label{eq:unot0}
\int_{\R} \ue(t,x) dx \neq 0.
\end{equation}
For \eqref{eq:OHepsw},
\begin{equation*}
\Pe^2(t,\infty)=\Big(\int_{\R} \ue(t,x) dx\Big)^2\neq 0.
\end{equation*}
Therefore, \eqref{eq:121} and an integration on $(0,t)$ give
\begin{equation*}
\norm{\ue(t,\cdot)}^2_{L^2(\R)}  + 2\eps\int_{0}^{t}\norm{\px\ue(s,\cdot)}^2_{L^2(\R)} ds\neq\norm{u_{\eps,0}}^2_{L^2(\R)},
\end{equation*}
which is in contradiction with \eqref{eq:stima-l-2-ueps}.

Let us show that \eqref{eq:con-u-1} implies \eqref{eq:Pmedianulla1}.
We begin by observing that, for \eqref{eq:P-int-1},  we can consider the following function:
\begin{equation}
\label{eq:F11}
\Fe(t,x)=\int_{-\infty}^x\Pe(t,y) dy.
\end{equation}
Thanks to the regularity of $\ue$ and \eqref{eq:F11}, integrating on $(-\infty,x)$ the first equation of \eqref{eq:OHepsw}, we get
\begin{equation*}
\int_{-\infty}^x\pt\ue(t,y) dy+f(\ue(t,x)) - f(0)-\eps\px\ue(t,x)=\gamma\Fe(t,x),
\end{equation*}
that is
\begin{equation}
\label{eq:1300}
\frac{d}{dt}\int_{-\infty}^x\ue(t,y) dy+f(\ue(t,x))-f(0)-\eps\px\ue(t,x)=\gamma\Fe(t,x).
\end{equation}
Instead, from the second equation of \eqref{eq:OHepsw}, we have
\begin{equation}
\label{eq:1301}
\pt\Pe(t,x)=\frac{d}{dt}\int_{-\infty}^{x}\ue(t,y)dy.
\end{equation}
It follows from \eqref{eq:1300} and \eqref{eq:1301} that
\begin{equation}
\label{eq:equaP}
\pt\Pe(t,x)+f(\ue(t,x))-f(0)+\eps\px\ue(t,x)=\gamma\Fe(t,x).
\end{equation}
We observe that, for \eqref{eq:con-u-1} and \eqref{eq:1301},
\begin{equation}
\label{eq:Pe10}
\lim_{x\to\infty} \pt\Pe(t,x) =\int_{\R} \pt\ue(t,x)dx=\frac{d}{dt}\int_{\R}\ue(t,x)dx=0,
\end{equation}
while for the regularity of $\ue$,
\begin{equation}
\label{eq:500-1}
\lim_{x\to\infty}\Big( f(\ue(t,x))-f(0)-\eps\px\ue(t,x)\Big)=0.
\end{equation}
Therefore, for \eqref{eq:F11}, \eqref{eq:equaP}, \eqref{eq:Pe10} and \eqref{eq:500-1},  we get
\begin{equation}
\label{eq:F-in-infty-1}
\Fe(t,\infty)=\int_{\R} \Pe(t,x)dx=0,
\end{equation}
that is \eqref{eq:Pmedianulla1}.

Let us show that \eqref{eq:Pmedianulla1} implies \eqref{eq:con-u-1}.
We assume by contradiction that \eqref{eq:con-u-1} does not hold, that is \eqref{eq:unot0}.
Then, for \eqref{eq:1301},
\begin{equation}
\label{eq:1305}
\lim_{x\to\infty} \pt\Pe(t,x) =\int_{\R} \pt\ue(t,x)dx=\frac{d}{dt}\int_{\R}\ue(t,x)dx\neq 0.
\end{equation}
It follows from \eqref{eq:F11}, \eqref{eq:equaP}, \eqref{eq:500-1} and \eqref{eq:1305} that
\begin{equation*}
\int_{\R} \Pe(t,x)dx \neq 0,
\end{equation*}
which is in contradiction with \eqref{eq:Pmedianulla1}.

Let us show that \eqref{eq:Pmedianulla1} implies \eqref{eq:stima-l-2-pesp}.
Multiplying  \eqref{eq:equaP} by $\Pe$, an integration on $\R$ gives
\begin{align*}
\frac{1}{2}\frac{d}{dt}\int_{\R} \Pe^2dx=&\int_{\R} \Pe\pt\Pe dx\\
=&\eps\int_{\R}\px\ue\Pe dx-\int_{\R} \Pe f(\ue) dx + f(0)\int_{\R} \Pe dx +\gamma\int_{\R}\Pe\Fe dx,
\end{align*}
that is
\begin{equation*}
\frac{d}{dt}\int_{\R} \Pe^2dx=2\eps\int_{\R}\px\ue\Pe dx-2\int_{\R} \Pe f(\ue) dx + f(0)\int_{\R} \Pe dx + 2\gamma\int_{\R}\Pe\Fe dx.
\end{equation*}
For \eqref{eq:OHepsw},
\begin{equation*}
2\eps\int_{\R}\px\ue\Pe dx= -2\eps\int_{\R}\ue\px\Pe dx=-2\eps\int_{\R}\ue^2dx,
\end{equation*}
while for \eqref{eq:F11},
\begin{equation*}
2\int_{\R}\Pe\Fe dx=2\int_{\R}\Fe\px\Fe dx= \Fe^2(t,\infty)-\Fe^2(t,-\infty)=\Fe^2(t,\infty).
\end{equation*}
Then,
\begin{equation}
\label{eq:300}
\frac{d}{dt}\int_{\R} \Pe^2dx=-2\eps\int_{\R}\ue^2dx -2\int_{\R} \Pe f(\ue) dx +f(0)\int_{\R}\Pe dx  +\gamma\Fe ^2(t,\infty).
\end{equation}
Thanks to \eqref{eq:Pmedianulla1},
\begin{equation}
\begin{split}
\label{eq:301}
\lim_{x\to\infty}\Fe ^2(t,x)&= \Big(\int_{\R}\Pe(t,x)dx\Big)^2=0,\\
f(0)\int_{\R}\Pe dx&=0.
\end{split}
\end{equation}
\eqref{eq:300}, \eqref{eq:301} and an integration on $(0,t)$ give \eqref{eq:stima-l-2-pesp}.

Let us show that \eqref{eq:stima-l-2-pesp} implies \eqref{eq:Pmedianulla1}.
We assume by contradiction that \eqref{eq:Pmedianulla1} does not hold, namely:
\begin{equation*}
\int_{\R} \Pe(t,x) dx \neq 0.
\end{equation*}
For \eqref{eq:F11},
\begin{equation}
\label{eq:24}
\Fe^2(t,\infty)=\Big(\int_{\R} \Pe(t,x) dx\Big)^2\neq 0.
\end{equation}
Moreover,
\begin{equation}
\label{eq:23}
f(0)\int_{\R}\Pe dx\neq 0.
\end{equation}
Therefore, \eqref{eq:300}, \eqref{eq:24} and \eqref{eq:23} gives
\begin{equation*}
\frac{d}{dt}\int_{\R} \Pe^2dx +2\eps\int_{\R}\ue^2dx +\int_{\R} \Pe\ue^2 dx \neq 0 ,
\end{equation*}
that is
\begin{equation*}
\frac{d}{dt}\int_{\R} \Pe^2dx \neq -2\eps\int_{\R}\ue^2dx -\int_{\R} \Pe f(\ue) dx .
\end{equation*}
Therefore, we have that
\begin{equation*}
\norm{\Pe(t,\cdot)}^2_{L^2(\R)}+2\eps\int_{0}^{t}\norm{\ue(s,\cdot)}^2_{L^2(\R)} ds\neq \norm{P_{\eps,0}}^2_{L^2(\R)}-2\int_{0}^{t}\!\!\int_{\R}\Pe f(\ue)dsdx,
\end{equation*}
which is a contradiction with \eqref{eq:stima-l-2-pesp}.
\end{proof}

\begin{lemma}
\label{lm:l2-u-1}
For each $t \ge 0$, \eqref{eq:P-int-1} and \eqref{eq:Pmedianulla1} hold true.
Moreover, we have that
\begin{align}
\label{eq:l2-u-1}
\norm{\ue(t,\cdot)}_{L^2(\R)}^2+2\eps \int_0^t \norm{\px \ue(s,\cdot)}^2_{L^2(\R)}ds&\le\norm{u_{0}}^2_{L^2(\R)},\\
\label{eq:l2-P-1}
\norm{\Pe(t,\cdot)}^2_{L^2(\R)}+2\eps\norm{\ue(t,\cdot)}^2_{L^2(\R)}&\le \norm{P_{0}}^2_{L^2(\R)}+2C_{0}\int_{0}^{t}\!\!\int_{\R}\vert\Pe\vert \ue^2dsdx.
\end{align}
\end{lemma}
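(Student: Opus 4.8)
The plan is to read off the entire statement from Lemma \ref{lm:cns2} and Lemma \ref{lm:u-null}, leaving only one substantive point to verify. Lemma \ref{lm:u-null} already gives \eqref{eq:con-u-1}, i.e. $\int_\R\ue(t,x)\dx=0$ for all $t\ge0$. The equivalences in Lemma \ref{lm:cns2} hold under the standing hypothesis \eqref{eq:P-int-1}, so as soon as \eqref{eq:P-int-1} is established, \eqref{eq:con-u-1} propagates to \eqref{eq:Pmedianulla1} and to the energy identities \eqref{eq:stima-l-2-ueps} and \eqref{eq:stima-l-2-pesp}. Hence the only genuinely new ingredient is the integrability \eqref{eq:P-int-1}; everything else is bookkeeping.

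To prove \eqref{eq:P-int-1} I would repeat the computation of Lemma \ref{lm:p8} with $\delta=0$, working directly with the classical solution $\ue$ of \eqref{eq:OHepsw}. By \eqref{eq:pxp-u}, $\Pe(t,x)=\int_{-\infty}^x\ue(t,y)\dy$, so $\Pe(t,-\infty)=0$ and $\int_0^{-\infty}\ue(t,y)\dy=-\Pe(t,0)$; differentiating this closed identity in $t$, as in \eqref{eq:lim-int-u}--\eqref{eq:lim-int-u-in-t}, yields $\frac{d}{dt}\int_0^{-\infty}\ue(t,y)\dy=-\pt\Pe(t,0)$. Integrating the first equation of \eqref{eq:OHepsw} over $(0,x)$ gives
\begin{equation*}
\int_0^x\pt\ue(t,y)\dy+f(\ue(t,x))-f(\ue(t,0))-\eps\px\ue(t,x)+\eps\px\ue(t,0)=\gamma\int_0^x\Pe(t,y)\dy .
\end{equation*}
Since $\ue(t,\cdot)\in H^\ell(\R)$ with $\ell>2$ by \eqref{eq:uePe1}, Sobolev embedding forces $\ue(t,x)\to0$ and $\px\ue(t,x)\to0$ as $x\to-\infty$, hence $f(\ue(t,x))\to f(0)$. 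Letting $x\to-\infty$ then shows that $\gamma\int_0^x\Pe(t,y)\dy$ tends to the finite boundary expression $-\pt\Pe(t,0)+f(0)-f(\ue(t,0))+\eps\px\ue(t,0)$, so the improper integral $\int_0^{-\infty}\Pe(t,y)\dy$ converges, which is exactly \eqref{eq:P-int-1}.

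With \eqref{eq:P-int-1} available, Lemma \ref{lm:cns2} and \eqref{eq:con-u-1} deliver \eqref{eq:Pmedianulla1}, \eqref{eq:stima-l-2-ueps} and \eqref{eq:stima-l-2-pesp}. Estimate \eqref{eq:l2-u-1} then follows from the equality \eqref{eq:stima-l-2-ueps} and the data bound $\norm{u_{\eps,0}}_{L^2(\R)}\le\norm{u_0}_{L^2(\R)}$ of \eqref{eq:u0eps1}. For \eqref{eq:l2-P-1} I would start from \eqref{eq:stima-l-2-pesp}, use $\norm{P_{\eps,0}}_{L^2(\R)}\le\norm{P_0}_{L^2(\R)}$ from \eqref{eq:u0eps1}, and bound the source term via \eqref{eq:assflux1}: from $\abs{f'(u)}\le C_0\abs{u}$ one gets $\abs{f(\ue)-f(0)}\le\tfrac{C_0}{2}\ue^2$, while \eqref{eq:Pmedianulla1} makes the constant $f(0)$ integrate to zero, so that
\begin{equation*}
-2\int_0^t\!\!\int_\R\Pe f(\ue)\dsdy=-2\int_0^t\!\!\int_\R\Pe\,\bigl(f(\ue)-f(0)\bigr)\dsdy\le 2C_0\int_0^t\!\!\int_\R\abs{\Pe}\,\ue^2\dsdy,
\end{equation*}
which is \eqref{eq:l2-P-1}.

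The crux is the isolated step \eqref{eq:P-int-1}: all the $L^2$ information is already encoded in Lemma \ref{lm:cns2}, so the proof really hinges on showing that the boundary terms generated at $x=0$ and in the limit $x\to-\infty$ are finite and that the decay of $\ue$ and $\px\ue$ kills the flux and viscous contributions at infinity. This is where the $H^\ell$ regularity of $\ue$ is indispensable and where the argument genuinely uses the approximation rather than the mere $L^2$ bounds.
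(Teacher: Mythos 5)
Your proposal is correct and follows essentially the same route as the paper: prove \eqref{eq:P-int-1} by integrating both equations of \eqref{eq:OHepsw} up to a finite base point and sending $x\to-\infty$ (the paper uses an arbitrary $a$ where you use $a=0$, and asserts the decay of $\ue$, $\px\ue$ from smoothness where you justify it via the $H^\ell$ embedding), then invoke Lemmas \ref{lm:u-null} and \ref{lm:cns2} to transfer \eqref{eq:con-u-1} into \eqref{eq:Pmedianulla1}, \eqref{eq:stima-l-2-ueps}, \eqref{eq:stima-l-2-pesp}, and finish with the data bounds \eqref{eq:u0eps1} and the flux bound from \eqref{eq:assflux1}. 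Your treatment of the source term is in fact slightly more careful than the paper's, which writes $\abs{f(\ue)}\le C_0\ue^2$ directly, whereas you first subtract $f(0)$ (legitimate since \eqref{eq:Pmedianulla1} makes $f(0)\int_{\R}\Pe\,dx$ vanish) and then use $\abs{f(\ue)-f(0)}\le \tfrac{C_0}{2}\ue^2$.
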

\begin{proof}
We begin by proving that \eqref{eq:P-int-1} holds true.
Let $a$ be, an arbitrary real number. Integrating on $(a,x)$ the second equation of \eqref{eq:OHepsw}, we get
\begin{equation*}
\int_{a}^{x} \ue(t,y) dy = \Pe(t,x)-\Pe(t,a).
\end{equation*}
Since $\Pe(t,-\infty)=0$, then
\begin{equation}
\label{eq:30}
\int_{a}^{-\infty} \ue(t,x) dx= -\Pe(t,a).
\end{equation}
Differentiating \eqref{eq:30} with respect to $t$, we get
\begin{equation}
\label{eq:31}
\frac{d}{dt}\int_{a}^{-\infty} \ue(t,x) dx=\int_{a}^{-\infty}\pt \ue(t,x) dx = -\pt\Pe(t,a).
\end{equation}
Integrating on $(a,x)$ the first equation of \eqref{eq:OHepsw}, we obtain that
\begin{equation}
\label{eq:34}
\begin{split}
\int_{a}^{x}\pt\ue(t,y)dy &+ f(\ue(t,x)) - f(\ue(t,a))\\
&-\eps\px\ue(t,x)+\eps\px\ue(t,a)=\gamma\int_{a}^{x}\Pe(t,y) dy.
\end{split}
\end{equation}
Being $\ue$ a smooth solution of \eqref{eq:OHepswint}, we have that
\begin{equation}
\label{eq:500-2}
\lim_{x\to-\infty}\Big( f(\ue(t,x))-\eps\px\ue(t,x)\Big)=f(0).
\end{equation}
It follows from \eqref{eq:31}, \eqref{eq:34} and \eqref{eq:500-2} that
\begin{equation*}
\gamma\int_{a}^{-\infty}\Pe(t,x) dx= -\pt\Pe(t,a)+ f(0)- f(\ue(t,a))+\eps\px\ue(t,a),
\end{equation*}
which gives \eqref{eq:P-int-1}.
Therefore, for Lemmas \ref{lm:u-null} and \ref{lm:cns2}, we have \eqref{eq:Pmedianulla1}.
Lemmas \ref{lm:u-null} and \ref{lm:cns2} also say that \eqref{eq:stima-l-2-ueps} holds true. Thus, \eqref{eq:l2-u-1} follows from \eqref{eq:u0eps1} and \eqref{eq:stima-l-2-ueps}.

Finally, we prove \eqref{eq:l2-P-1}. Again by Lemmas \ref{lm:u-null} and \ref{lm:cns2}, we get \eqref{eq:stima-l-2-pesp}. Then, for \eqref{eq:assflux1} and  \eqref{eq:u0eps1},
\begin{align*}
\norm{\Pe(t,\cdot)}^2_{L^2(\R)}+2\eps\norm{\ue(t,\cdot)}^2_{L^2(\R)}&= \norm{P_{\eps,0}}^2_{L^2(\R)}-2\int_{0}^{t}\!\!\int_{\R}\Pe f(\ue)dsdx\\
&\le \norm{P_{0}}^2_{L^2(\R)}+ 2\left\vert \int_{0}^{t}\!\!\int_{\R}\Pe f(\ue)dsdx\right\vert\\
&\le \norm{P_{0}}^2_{L^2(\R)}+ 2\int_{0}^{t}\!\!\int_{\R}\vert\Pe\vert \vert f(\ue)\vert dsdx\\
&\le \norm{P_{0}}^2_{L^2(\R)}+ 2C_{0}\int_{0}^{t}\!\!\int_{\R}\vert\Pe\vert\ue^2 dsdx,
\end{align*}
that is \eqref{eq:l2-P-1}.
\end{proof}
\begin{lemma}\label{lm:p-infty-1}
Let $T>0$. There exists a function $C(T)>0$, independent on $\eps$, such that
\begin{align}
\label{eq:P-infty-1}
\norm{\Pe}_{L^{\infty}(I_{T,1})}&\le C(T),\\
\label{eq:l2P-1}
\norm{\Pe(t,\cdot)}_{L^2(\R)}&\le C(T),
\end{align}
where $I_{T,1}$ is defined in \eqref{eq:defI}.
\end{lemma}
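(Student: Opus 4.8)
The plan is to close a loop between the $L^2$ and $L^\infty$ norms of $\Pe$, exactly as in the proof of Lemma \ref{lm:P-infty}, but now without the regularizing $\delta$-terms. The two ingredients are the already-established energy bounds of Lemma \ref{lm:l2-u-1} and a one-dimensional interpolation that turns an $L^2$ bound on $\Pe$ into an $L^\infty$ bound. Note that $\norm{\Pe}_{L^{\infty}(I_{T,1})}$ is a priori finite for each fixed $\eps$ by \eqref{eq:uePe2} in Lemma \ref{lm:exist}; the whole point is to make the bound $\eps$-independent.

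First I would record the interpolation inequality. Since $\px\Pe=\ue$ and $\Pe(t,-\infty)=0$ by \eqref{eq:pxp-u}, for every $x$ one has
\begin{equation*}
\Pe^2(t,x)=2\int_{-\infty}^{x}\Pe(t,y)\px\Pe(t,y)\,dy=2\int_{-\infty}^{x}\Pe(t,y)\ue(t,y)\,dy\le 2\norm{\Pe(t,\cdot)}_{L^2(\R)}\norm{\ue(t,\cdot)}_{L^2(\R)},
\end{equation*}
by Cauchy--Schwarz. Combined with \eqref{eq:l2-u-1}, which gives $\norm{\ue(t,\cdot)}_{L^2(\R)}\le\norm{u_0}_{L^2(\R)}$ uniformly in $\eps$, this yields the bound $\Pe^2(t,x)\le 2\norm{u_0}_{L^2(\R)}\norm{\Pe(t,\cdot)}_{L^2(\R)}$.

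Next I would feed an $L^\infty$ control back into the identity \eqref{eq:l2-P-1}. Estimating its source term by
\begin{equation*}
\int_{\R}\abs{\Pe}\ue^2\,dx\le\norm{\Pe(t,\cdot)}_{L^\infty(\R)}\norm{\ue(t,\cdot)}_{L^2(\R)}^2\le\norm{u_0}^2_{L^2(\R)}\norm{\Pe}_{L^\infty(I_{T,1})},
\end{equation*}
and integrating over $(0,t)\subset(0,T)$, \eqref{eq:l2-P-1} becomes
\begin{equation*}
\norm{\Pe(t,\cdot)}^2_{L^2(\R)}\le\norm{P_0}^2_{L^2(\R)}+2C_0 T\,\norm{u_0}^2_{L^2(\R)}\norm{\Pe}_{L^\infty(I_{T,1})}\le C(T)\bigl(1+\norm{\Pe}_{L^\infty(I_{T,1})}\bigr),
\end{equation*}
with $C(T)$ independent of $\eps$. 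Inserting this into the interpolation inequality and using Young's inequality to absorb the square root (as in Lemma \ref{lm:P-infty}), then taking the supremum over $I_{T,1}$, gives a closed scalar inequality $\norm{\Pe}^2_{L^\infty(I_{T,1})}\le C(T)\bigl(1+\norm{\Pe}_{L^\infty(I_{T,1})}\bigr)$, that is
\begin{equation*}
\norm{\Pe}^2_{L^\infty(I_{T,1})}-C(T)\norm{\Pe}_{L^\infty(I_{T,1})}-C(T)\le 0,
\end{equation*}
which forces $\norm{\Pe}_{L^\infty(I_{T,1})}\le C(T)$, i.e.\ \eqref{eq:P-infty-1}; the bound \eqref{eq:l2P-1} then follows by substituting back into the displayed $L^2$ estimate.

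The main obstacle is making this loop self-contained and $\eps$-uniform. The quantity $\norm{\Pe}_{L^\infty(I_{T,1})}$ must be known a priori to be finite (it is, by \eqref{eq:uePe2}) so that the quadratic inequality is meaningful and not vacuous, and every constant entering $C(T)$ must originate only from the $\eps$-independent data bounds \eqref{eq:l2-u-1}, \eqref{eq:l2-P-1} and \eqref{eq:u0eps1}. No new differential inequality is required here, since \eqref{eq:l2-P-1} already encodes the time evolution; the only genuine work is the algebraic manipulation that converts the coupling into the scalar quadratic bound.
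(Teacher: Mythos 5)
Your proposal is correct and follows essentially the same route as the paper: both feed the a priori finite quantity $\norm{\Pe}_{L^{\infty}(I_{T,1})}$ into the $L^2$ energy estimate \eqref{eq:stima-l-2-pesp} via $\int_{\R}\abs{\Pe}\ue^2\,dx\le\norm{\Pe}_{L^{\infty}(I_{T,1})}\norm{u_0}^2_{L^2(\R)}$, interpolate $\Pe^2(t,x)\le 2\norm{\Pe(t,\cdot)}_{L^2(\R)}\norm{\px\Pe(t,\cdot)}_{L^2(\R)}$ with $\px\Pe=\ue$ and \eqref{eq:l2-u-1}, and close a scalar polynomial inequality in $\norm{\Pe}_{L^{\infty}(I_{T,1})}$. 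The only (immaterial) difference is at the closing step: the paper squares to obtain the quartic $X^4-C_2(T)X-A\le 0$ and analyzes its zeros, while you use $\sqrt{1+X}\le 1+X$ to reduce to the quadratic $X^2-C(T)X-C(T)\le 0$, exactly as in the proof of Lemma \ref{lm:P-infty}.
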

\begin{proof}
We begin by observing that, for \eqref{eq:stima-l-2-ueps} and \eqref{eq:stima-l-2-pesp},
\begin{align*}
\norm{\Pe(t,\cdot)}^2_{L^2(\R)}&\le \norm{P_{0}}^2_{L^2(\R)} + 2C_{0} \norm{u_{0}}^2_{L^2(\R)}\norm{\Pe}_{L^{\infty}(I_{T,1})}t\\
&\le \norm{P_{0}}^2_{L^2(\R)} + 2C_{0} \norm{u_{0}}^2_{L^2(\R)}T\norm{\Pe}_{L^{\infty}(I_{T,1})}.
\end{align*}
Hence,
\begin{equation}
\label{eq:p-l2}
\norm{\Pe(t,\cdot)}^2_{L^2(\R)}\le \norm{P_{0}}^2_{L^2(\R)}+C_{1}(T)\norm{\Pe}_{L^{\infty}(I_{T,1})}.
\end{equation}
Due to the H\"older inequality, we get
\begin{equation*}
\Pe^2(t,x)\leq 2\int_{\R}\vert\Pe\px\Pe\vert dx\leq 2\norm{\Pe(t,\cdot)}_{L^2(\R)}\norm{\px\Pe(t,\cdot)}_{L^2(\R)},
\end{equation*}
that is
\begin{equation*}
\Pe^4(t,x)\leq 4\norm{\Pe(t,\cdot)}^2_{L^2(\R)}\norm{\px\Pe(t,\cdot)}^2_{L^2(\R)}.
\end{equation*}
For \eqref{eq:OHepsw}, \eqref{eq:stima-l-2-ueps} and \eqref{eq:p-l2},
\begin{align*}
\Pe^4(t,x)\leq& 4\norm{\Pe(t,\cdot)}^2_{L^2(\R)}\norm{u_{0}}^2_{L^2(\R)}\\
\leq & 4 \norm{P_{0}}^2_{L^2(\R)}\norm{u_{0}}^2_{L^2(\R)}+4\norm{u_{0}}^2_{L^2(\R)}C_{1}(T)\norm{\Pe}_{L^{\infty}(I_{T,1})}.
\end{align*}
Therefore,
\begin{equation}
\label{eq:L-4}
\norm{\Pe}^4_{L^{\infty}(I_{T,1})}-C_{2}(T)\norm{\Pe}_{L^{\infty}(I_{T,1})} - 4 \norm{P_{0}}^2_{L^2(\R)}\norm{u_{0}}^2_{L^2(\R)}\leq 0.
\end{equation}
Let us consider the following function
\begin{equation}
\label{eq:g-func}
g(X)=X^4 - C_{2}(T)X -A,
\end{equation}
where
\begin{equation}
\label{eq:cost1}
A=4 \norm{P_{0}}^2_{L^2(\R)}\norm{u_{0}}^2_{L^2(\R)}>0.
\end{equation}
We observe that
\begin{equation}
\label{eq:teozeri}
\lim_{X\to -\infty} g(X)=\infty, \quad g(0)=-A<0.
\end{equation}

Since $g'(X)=4 X^3 -C_{2}(T) $, we have that
\begin{align*}
g\quad \textrm{is increasing in}\quad (E(T), \infty),
\end{align*}
where $\displaystyle E(T)=\Big(\frac{C_{2}(T)}{4}\Big)^{\frac{1}{3}}>0$.\\
Thus,
\begin{equation}
\label{eq:600}
g(E(T))<g(0)<0.
\end{equation}
Moreover,
\begin{equation}
\label{eq:7000}
\lim_{X\to \infty} g(X)=\infty.
\end{equation}
Then, it follows from \eqref{eq:teozeri}, \eqref{eq:600} and \eqref{eq:7000} that the function $g$ has only two zeros $D(T)<0<C(T)$.
Therefore, the inequality
\begin{equation*}
X^4 - C_{2}(T)X -A\leq 0
\end{equation*}
is verified when
\begin{equation}
\label{eq:601}
D(T)\leq X\leq C(T).
\end{equation}
Taking $X=\norm{\Pe}_{L^{\infty}(I_{T,1})}$, we have \eqref{eq:P-infty-1}.

Finally, \eqref{eq:l2P-1} follows from \eqref{eq:P-infty-1} and \eqref{eq:p-l2}.
\end{proof}
Arguing as Section \ref{sec:vv}, Lemma \ref{lm:linfty-u},  we obtain the following result
\begin{lemma}
\label{lm:linfty-u-1}
Let $T>0$. Then,
\begin{equation}
\label{eq:linfty-u-1}
\norm{\ue}_{L^\infty(I_{T,1})}\le\norm{u_0}_{L^\infty(\R)}+C(T),
\end{equation}
where $I_{T,1}$ is defined in \eqref{eq:defI}.
\end{lemma}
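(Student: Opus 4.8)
The plan is to repeat the parabolic comparison argument used for Lemma \ref{lm:linfty-u}, the only change being that we now invoke the $\eps$-independent bound on $\Pe$ supplied by Lemma \ref{lm:p-infty-1}. Since $\ue$ is a classical solution of \eqref{eq:OHepsw}, its first equation is $\pt\ue+\px f(\ue)-\eps\pxx\ue=\gamma\Pe$, and Lemma \ref{lm:p-infty-1} gives $\norm{\Pe}_{L^\infty(I_{T,1})}\le C(T)$ with $C(T)$ independent of $\eps$. Consequently $\ue$ obeys the parabolic differential inequality
\begin{equation*}
\pt\ue+\px f(\ue)-\eps\pxx\ue\le\gamma C(T)\qquad\text{on }I_{T,1}.
\end{equation*}

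First I would introduce the barrier $\mathcal F(t)=\norm{u_0}_{L^\infty(\R)}+\gamma C(T)t$, which satisfies $\mathcal F'=\gamma C(T)$ and is therefore a supersolution of the same operator. By \eqref{eq:u0eps1} one has $\norm{u_{\eps,0}}_{L^\infty(\R)}\le\norm{u_0}_{L^\infty(\R)}$, so the initial datum is dominated, $\max\{\ue(0,x),0\}\le\mathcal F(0)$. The comparison principle for parabolic equations then yields $\ue(t,x)\le\mathcal F(t)$ on $I_{T,1}$, and the symmetric barrier $-\mathcal F(t)$ gives $\ue(t,x)\ge-\mathcal F(t)$. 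Combining the two bounds and absorbing the factor $\gamma T$ into $C(T)$ produces \eqref{eq:linfty-u-1}.

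The argument is essentially routine; the single point deserving attention — and the reason this is stated separately from Lemma \ref{lm:linfty-u} — is that here the constant $C(T)$ must be uniform in $\eps$. This is precisely what distinguishes Lemma \ref{lm:p-infty-1} from Lemma \ref{lm:P-infty}, the former delivering an $\eps$-independent bound on $\norm{\Pe}_{L^\infty(I_{T,1})}$; apart from this substitution, nothing changes. The applicability of the comparison principle on the unbounded domain $\R$ is guaranteed by the regularity \eqref{eq:uePe1}: since $\ue(t,\cdot)\in H^\ell(\R)$ with $\ell>2$, both $\ue$ and its spatial derivatives decay at infinity, so no artificial growth condition at spatial infinity is required and the comparison with the spatially constant barrier $\mathcal F$ is legitimate.
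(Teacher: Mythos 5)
Your proposal is correct and coincides with the paper's own argument: the paper proves this lemma by remarking ``Arguing as Section \ref{sec:vv}, Lemma \ref{lm:linfty-u}'', i.e.\ by exactly the comparison with the barrier ${\mathcal F}(t)=\norm{u_0}_{L^\infty(\R)}+\gamma C(T)t$, with the $\eps$-independent bound $\norm{\Pe}_{L^{\infty}(I_{T,1})}\le C(T)$ of Lemma \ref{lm:p-infty-1} replacing the $\delta$-independent bound of Lemma \ref{lm:P-infty}, and $\norm{u_{\eps,0}}_{L^\infty(\R)}\le\norm{u_0}_{L^\infty(\R)}$ from \eqref{eq:u0eps1}. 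Your closing remark correctly identifies the only point of substance, namely that the constant must be uniform in $\eps$, which is precisely what Lemma \ref{lm:p-infty-1} supplies.
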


Let us continue by proving the existence of  a distributional solution
to  \eqref{eq:OH}, \eqref{eq:init}  satisfying \eqref{eq:OHentropy}.
\begin{lemma}\label{lm:conv}
Let $T>0$. There exists a function $u\in L^{\infty}((0,T)\times\R)$ that is a distributional
solution of \eqref{eq:OHw} and satisfies  \eqref{eq:OHentropy} for every convex entropy $\eta\in C^2(\R)$.
\end{lemma}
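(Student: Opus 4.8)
The plan is to let $\eps\to0$ in the viscous approximations $\ue$ of \eqref{eq:OHepsw}, following the vanishing-viscosity and compensated-compactness scheme already used for the limit $\delta\to0$ in Lemma \ref{lm:exist}, but now keeping track of the sign of the entropy dissipation so that the limiting entropy balance becomes the inequality \eqref{eq:OHentropy}. Fix a convex $\eta\in C^2(\R)$ and let $q$ be its entropy flux, $q'=f'\eta'$. Multiplying the first equation of \eqref{eq:OHepsw} by $\eta'(\ue)$ and using the chain rule gives
\begin{equation*}
\pt\eta(\ue)+\px q(\ue)=\underbrace{\px\!\left(\eps\eta'(\ue)\px\ue\right)}_{=:\CL_{1,\eps}}\,\underbrace{-\eps\eta''(\ue)(\px\ue)^2}_{=:\CL_{2,\eps}}\,\underbrace{+\gamma\eta'(\ue)\Pe}_{=:\CL_{3,\eps}}.
\end{equation*}
First I would show that $\{\pt\eta(\ue)+\px q(\ue)\}_\eps$ is precompact in $\Hneg((0,T)\times\R)$. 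Exactly as in Lemma \ref{lm:exist}, the bounds of Lemmas \ref{lm:l2-u-1} and \ref{lm:linfty-u-1} make $\{\eps\eta'(\ue)\px\ue\}_\eps$ bounded in $L^2((0,T)\times\R)$, so $\{\CL_{1,\eps}\}_\eps$ is compact in $\Hneg$; the same estimates make $\{\CL_{2,\eps}\}_\eps$ bounded in $L^1((0,T)\times\R)$; and Lemmas \ref{lm:p-infty-1} and \ref{lm:linfty-u-1} make $\{\CL_{3,\eps}\}_\eps$ bounded in $L^1_{loc}((0,T)\times\R)$. Murat's lemma then yields the claimed $\Hneg$-compactness.

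Combining this with the uniform $L^\infty$ bound of Lemma \ref{lm:linfty-u-1} and the genuine nonlinearity $f''\ge C_0$, Tartar's compensated compactness method furnishes a subsequence $\uek\to u$ almost everywhere and in $L^p_{loc}((0,T)\times\R)$, $1\le p<\infty$, with $u\in L^\infty((0,T)\times\R)$. For the nonlocal term I would use that $\Pe$ is bounded in $L^\infty$ (Lemma \ref{lm:p-infty-1}) to extract a weak-$\star$ limit $\Pek\weakstar P$; since $\px\Pe=\ue$ and $\ue\to u$, one obtains $\px P=u$ in $\Dp$, and the normalization $\Pe(t,-\infty)=0$ together with the uniform $L^2$ bound identifies $P(t,x)=\int_{-\infty}^x u(t,y)\,dy$ with $\int_\R P\,dx=0$; thus $(u,P)$ satisfies the constraints of \eqref{eq:OHw}.

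With these convergences I would pass to the limit in the distributional formulation. Testing \eqref{eq:OHepsw} against $\varphi\in C_c^\infty((0,T)\times\R)$ and using $f(\ue)\to f(u)$ in $L^1_{loc}$ (a.e.\ convergence and dominated convergence), $\Pek\weakstar P$, and $\eps\pxx\ue\to0$ in $\Dp$, shows that $u$ is a distributional solution of \eqref{eq:OHw}. For the entropy inequality I would test the entropy identity above against a nonnegative $\varphi\in C_c^\infty$: the fluxes $\eta(\ue)\to\eta(u)$ and $q(\ue)\to q(u)$ in $L^1_{loc}$; the source $\gamma\eta'(\ue)\Pe\to\gamma\eta'(u)P$ because $\eta'(\ue)\to\eta'(u)$ strongly in $L^2_{loc}$ while $\Pek\weakstar P$; the diffusion $\CL_{1,\eps}=\eps\pxx\eta(\ue)\to0$ in $\Dp$ since $\eta(\ue)$ is uniformly bounded; and $\CL_{2,\eps}=-\eps\eta''(\ue)(\px\ue)^2\le0$ by convexity, so discarding the (nonpositive) dissipation turns the identity into \eqref{eq:OHentropy}.

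The step I expect to be the main obstacle is the nonlocal source $\Pe$ on the unbounded line. Unlike the flux $f(\ue)$, the quantity $\Pe$ is recovered only as a primitive of $\ue$, so passing to the limit requires pairing the weak-$\star$ convergence of $\Pe$ with the strong convergence of $\eta'(\ue)$, and, more delicately, verifying that the limit $P$ retains the correct behaviour at infinity --- namely $P\in L^2(\R)$ and $\int_\R P\,dx=0$, rather than being merely some antiderivative of $u$. This is exactly where the uniform estimates of Lemma \ref{lm:p-infty-1} and the conservation $\int_\R\ue\,dx=0$ must be invoked.
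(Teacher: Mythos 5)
Your proposal follows essentially the same route as the paper: the paper's proof (carried out in Lemma \ref{lm:conv-u-1}) multiplies the viscous equation \eqref{eq:OHepsw} by $\eta'(\ue)$, shows $\CL_{1,\eps}\to 0$ in $H^{-1}$, $\{\CL_{2,\eps}\}$ bounded in $L^1$, $\{\CL_{3,\eps}\}$ bounded in $L^1_{loc}$, and invokes Murat's lemma together with Tartar's compensated compactness and the uniform bounds of Lemmas \ref{lm:l2-u-1}, \ref{lm:p-infty-1} and \ref{lm:linfty-u-1} to extract $\uek\to u$ a.e.\ and identify $\px P=u$ via the same test-function computation you describe. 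Your handling of the nonlocal term (weak-$\star$ convergence of $\Pek$ paired with strong $L^2_{loc}$ convergence of $\eta'(\ue)$, plus the $L^2$ bound and zero-mean property to pin down $P$ as the correct primitive) and the discarding of the nonpositive dissipation $\CL_{2,\eps}$ match the paper's argument, if anything spelled out slightly more carefully.
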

We  construct a solution by passing
to the limit in a sequence $\Set{u_{\eps}}_{\eps>0}$ of viscosity approximations \eqref{eq:OHepsw}. We use the compensated compactness method \cite{TartarI}.
\begin{lemma}\label{lm:conv-u-1}
Let $T>0$. There exists a subsequence
$\{\uek\}_{k\in\N}$ of $\{\ue\}_{\eps>0}$
and a limit function $  u\in L^{\infty}((0,T)\times\R)$
such that
\begin{equation}\label{eq:convu-1}
    \textrm{$\uek \to u$ a.e.~and in $L^{p}_{loc}((0,T)\times\R)$, $1\le p<\infty$}.
\end{equation}
Moreover, we have
\begin{equation}
\label{eq:conv-P-1}
\textrm{$\Pek \to P$ in $ L^{\infty}((0,T)\times\R)\cap L^{2}((0,T)\times\R)$}
\end{equation}
such that
\begin{equation}
\label{eq:Px-u1}
\textrm{$\px P= u$ in the sense of distributions on $[0,\infty)\times\R$.}
\end{equation}
\end{lemma}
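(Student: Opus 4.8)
The plan is to mimic the compensated--compactness argument of Lemma~\ref{lm:exist}, now sending $\eps\to0$ along a subsequence rather than $\delta\to0$. The decisive point is that, although the individual derivative bounds of Section~\ref{sec:vv} degenerate as $\eps\to0$, the uniform estimates of Lemmas~\ref{lm:l2-u-1}, \ref{lm:p-infty-1} and \ref{lm:linfty-u-1} survive: $\norm{\ue}_{L^\infty(I_{T,1})}$, $\norm{\ue(t,\cdot)}_{L^2(\R)}$, $\norm{\Pe}_{L^\infty(I_{T,1})}$ and $\norm{\Pe(t,\cdot)}_{L^2(\R)}$ are bounded independently of $\eps$, and \eqref{eq:l2-u-1} provides the crucial $\sqrt\eps$-weighted control $\eps\int_0^T\norm{\px\ue(s,\cdot)}^2_{L^2(\R)}\ds\le C(T)$.

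First I would fix a convex $\eta\in C^2(\R)$ with flux $q'=f'\eta'$ and, exactly as before, multiply the first equation of \eqref{eq:OHepsw} by $\eta'(\ue)$ to obtain
\[
\pt\eta(\ue)+\px q(\ue)=\px\bigl(\eps\eta'(\ue)\px\ue\bigr)-\eps\eta''(\ue)(\px\ue)^2+\gamma\eta'(\ue)\Pe.
\]
Then I would verify the Murat decomposition on $(0,T)\times\R$: the first term is the $x$-derivative of $\eps\eta'(\ue)\px\ue=\sqrt\eps\,\eta'(\ue)\cdot\sqrt\eps\px\ue$, which tends to $0$ in $L^2$ (hence is compact in $\Hneg$) because $\eta'(\ue)$ is bounded by Lemma~\ref{lm:linfty-u-1} while $\sqrt\eps\px\ue$ is bounded in $L^2$ by \eqref{eq:l2-u-1}; the term $\eps\eta''(\ue)(\px\ue)^2$ is bounded in $L^1$ since $\eps\int_0^T\norm{\px\ue(s,\cdot)}^2_{L^2(\R)}\ds\le C(T)$; and $\gamma\eta'(\ue)\Pe$ is bounded in $L^1_{\loc}$ by the $L^\infty$ bound \eqref{eq:P-infty-1}. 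Murat's lemma \cite{Murat:Hneg} then yields that $\{\pt\eta(\ue)+\px q(\ue)\}_{\eps}$ is precompact in $\Hneg((0,T)\times\R)$. Together with the uniform $L^\infty$ bound and the strict convexity \eqref{eq:assflux1}, Tartar's theorem \cite{TartarI} forces the associated Young measure to reduce to a Dirac mass, producing a subsequence $\uek\to u$ a.e.\ and in $L^p_{\loc}$, i.e.\ \eqref{eq:convu-1}.

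For the potentials, the uniform bounds \eqref{eq:P-infty-1}--\eqref{eq:l2P-1} give, up to a further subsequence, $\Pek\weakstar P$ in $L^\infty((0,T)\times\R)$ and $\Pek\wto P$ in $L^2((0,T)\times\R)$. Passing to the limit in the linear identity $\px\Pek=\uek$ and using the strong convergence \eqref{eq:convu-1} of $\uek$, I obtain $\px P=u$ in $\Dp$, which is \eqref{eq:Px-u1}. To upgrade \eqref{eq:conv-P-1} to strong convergence I would note that \eqref{eq:l2-u-1} and \eqref{eq:l2P-1} make $\Pe$ bounded in $L^\infty(0,T;H^1(\R))$; since in one space dimension $H^1$ embeds compactly into $C$ on bounded intervals and $\px\Pek=\uek$ already converges strongly in $L^2_{\loc}$, an Aubin--Lions argument---with the time regularity read off from the evolution law \eqref{eq:equaP} for $\pt\Pe$---produces strong convergence of $\Pek$ in $L^2_{\loc}$, which the uniform $L^\infty\cap L^2$ bounds then extend to the global statement \eqref{eq:conv-P-1}.

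I expect the main obstacle to be the $\Hneg$-compactness step, and within it the handling of the vanishing-viscosity terms: one must exploit that the separate bounds on $\px\ue$ blow up like $\eps^{-1/2}$ while the combinations entering the entropy balance remain controlled, so that the parabolic dissipation is spent precisely in showing $\px(\eps\eta'(\ue)\px\ue)\to0$ and $\eps\eta''(\ue)(\px\ue)^2$ stays in $L^1$. A secondary technical point is the time-compactness needed for the strong convergence of $\Pek$, where the nonlocal term $\gamma\Fe$ in \eqref{eq:equaP} must be controlled using the integrability of $\Pe$ furnished by \eqref{eq:Pmedianulla1} together with the $L^2$ setting.
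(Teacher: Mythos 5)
Your core argument is the paper's own proof: the identical decomposition of the entropy balance into $\CL_{1,\eps}=\px(\eps\eta'(\ue)\px\ue)$, $\CL_{2,\eps}=-\eps\eta''(\ue)(\px\ue)^2$ and $\CL_{3,\eps}=\gamma\eta'(\ue)\Pe$, with $\CL_{1,\eps}\to0$ in $L^2$ (hence in $H^{-1}$) via the $\sqrt{\eps}$-weighted bound of \eqref{eq:l2-u-1}, an $L^1$ bound on $\CL_{2,\eps}$ from the same estimate, an $L^1_{\loc}$ bound on $\CL_{3,\eps}$ from \eqref{eq:P-infty-1}, then Murat's lemma and Tartar's theorem giving \eqref{eq:convu-1}; and your derivation of \eqref{eq:Px-u1} by testing $\px\Pek=\uek$ and passing to the limit is exactly the paper's computation. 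The only place you genuinely diverge is \eqref{eq:conv-P-1}, which the paper dispatches in one sentence by citing the uniform bounds of Lemma \ref{lm:p-infty-1}, whereas you extract weak-$\star$/weak limits and then propose an Aubin--Lions upgrade to strong convergence. Your version is more substantive than the source on this point, but two caveats: first, the time compactness you want to read off from \eqref{eq:equaP} requires a bound on $\pt\Pek$ uniform in $\eps$ in some negative space, and the nonlocal term $\gamma\Fe$ is not controlled by the available a priori estimates --- \eqref{eq:Pmedianulla1} gives integrability of $\Pe(t,\cdot)$ at infinity for each fixed $\eps$, not a uniform $L^1$ bound, so $\Fe$ carries no uniform size estimate; a more robust route to local strong convergence is to write $\Pek(t,x)=\Pek(t,a)+\int_a^x\uek(t,y)dy$ and exploit the strong $L^2_{\loc}$ convergence of $\uek$ already in hand. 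Second, your closing claim that the uniform $L^\infty\cap L^2$ bounds ``extend'' local strong convergence to strong convergence in $L^{\infty}((0,T)\times\R)\cap L^{2}((0,T)\times\R)$ does not follow: boundedness prevents neither escape of $L^2$ mass to spatial infinity nor failure of uniform convergence. Read literally, \eqref{eq:conv-P-1} is stated (and left unproved) in this strong form in the paper too, and for its only use --- passing to the limit in the tested identity to get \eqref{eq:Px-u1} --- weak $L^2$ convergence of $\Pek$, which both you and the paper do secure, is entirely sufficient.
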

\begin{proof}
Let $\eta:\R\to\R$ be any convex $C^2$ entropy function, and
$q:\R\to\R$ be the corresponding entropy
flux defined by $q'=f'\eta'$.
By multiplying the first equation in \eqref{eq:OHepsw} with
$\eta'(\ue)$ and using the chain rule, we get
\begin{equation*}
    \pt  \eta(\ue)+\px q(\ue)
    =\underbrace{\eps \pxx \eta(\ue)}_{=:\CL_{1,\eps}}
    \, \underbrace{-\eps \eta''(\ue)\left(\px  \ue\right)^2}_{=: \CL_{2,\eps}}
     \, \underbrace{+\gamma\eta'(\ue) \Pe}_{=: \CL_{3,\eps}},
\end{equation*}
where  $\CL_{1,\eps}$, $\CL_{2,\eps}$, $\CL_{3,\eps}$ are distributions.\\
Let us show that
\begin{equation*}
\textrm{$\CL_{1,\eps}\to 0$ in $H^{-1}((0,T)\times\R)$, $T>0$}.
\end{equation*}
Since
\begin{equation*}
\eps\pxx\eta(\ue)=\px(\eps\eta'(\ue)\px\ue),
\end{equation*}
for Lemmas \ref{lm:l2-u-1} and \ref{lm:linfty-u-1},
\begin{align*}
\norm{\eps\eta'(\ue)\px\ue}^2_{L^2((0,T)\times\R)}&\le\eps ^2\norm{\eta'}^2_{L^{\infty}(J_T)}\int_{0}^{T}\norm{\px\ue(s,\cdot)}^2_{L^2(\R)}ds\\
&\le
\frac{\eps}{2}\norm{\eta'}^2_{L^{\infty}(J_T)}\norm{u_{0}}^2_{L^2(\R)}\to
0,
\end{align*}
where
\begin{equation*}
J_T=\left(-\norm{u_0}_{L^\infty(\R)}- C(T), \norm{u_0}_{L^\infty(\R)}+ C(T)\right).
\end{equation*}
Arguing as Lemma \eqref{lm:exist}, we obtain that
\begin{align*}
&\textrm{$\{\CL_{2,\eps}\}_{\eps>0}$ is uniformly bounded in $L^1((0,T)\times\R), \quad T>0$},\\
&\textrm{$\{\CL_{3,\eps}\}_{\eps>0}$ is uniformly bounded in $L^1_{loc}((0,T)\times\R),\quad T>0$}.
\end{align*}
Therefore, Murat's lemma \cite{Murat:Hneg} implies that
\begin{equation}
\label{eq:GMC11}
    \text{$\left\{  \pt  \eta(\ue)+\px q(\ue)\right\}_{\eps>0}$
    lies in a compact subset of $\Hneg((0,\infty)\times\R)$.}
\end{equation}
The $L^{\infty}$ bound stated in Lemma \ref{lm:linfty-u-1}, \eqref{eq:GMC11} and the
 Tartar's compensated compactness method \cite{TartarI} give the existence of a subsequence
$\{\uek\}_{k\in\N}$ and a limit function $  u\in L^{\infty}((0,T)\times\R)$
such that \eqref{eq:convu-1} holds.

\eqref{eq:conv-P-1} follows from Lemma \ref{lm:p-infty-1}.

We conclude by proving that \eqref{eq:Px-u1} holds true.\\
Let $\phi\in C^{\infty}(\R^2)$ be a test function with compact support. Multiplying by $\phi$ the second equation of \eqref{eq:OHepsw}, we have that
\begin{equation*}
\int_{0}^{\infty}\!\!\!\!\int_{\R}\px\Pek\phi dtdx = \int_{0}^{\infty}\!\!\!\!\int_{\R} \uek dsdx,
\end{equation*}
that is
\begin{equation}
\label{eq:40}
-\int_{0}^{\infty}\!\!\!\!\int_{\R}\Pek\px\phi dtdx= \int_{0}^{\infty}\!\!\!\!\int_{\R} \uek \phi dsdx.
\end{equation}
\eqref{eq:Px-u1} follows from  \eqref{eq:convu-1}, \eqref{eq:conv-P-1} and \eqref{eq:40}.
\end{proof}

\section{Oleinik estimate and uniqueness of the entropy solution for Ostrovsky-Hunter equation}\label{sec:olei-uni}
In \cite{CdK, dR}, it is  proved that the initial value problem \eqref{eq:OHw-u}, or \eqref{eq:OHw}, admits a unique entropy solution, when the flux is assumed
Lipschitz continuous. Denoting with $C(T)$ the constants which depends on $T$,  in this section, we prove the following theorem.
\begin{theorem}[Oleinik estimate]\label{th:olei}
Fixed $T>0$. Let us suppose that the flux $f$ is strictly convex, that is
\begin{equation}
\label{eq:fconvex}
f''\ge c>0, \quad \text{for some constant c}.
\end{equation}
Then, there exists a positive constant $C(T)$ such that
\begin{equation}
\label{eq:stima-olei}
\frac{u(t,x)-u(t,y)}{x-y}\le C(T)\left(\frac{1}{t}+1\right),
\end{equation}
for almost every $t\in(0,T)$ and $x,\,y\in\R$, $x\neq y$, where $u$ is the unique entropy weak solution of \eqref{eq:OHw-u}, or \eqref{eq:OHw}.
\end{theorem}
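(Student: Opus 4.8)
The plan is to prove \eqref{eq:stima-olei} first for the viscous approximations $\ue$ constructed in Section \ref{sec:Es} and then to pass to the vanishing--viscosity limit. Writing $\we=\px\ue$ and differentiating the first equation of \eqref{eq:OHepsw} in $x$, together with $\px\Pe=\ue$, gives the Riccati--type identity
\begin{equation*}
\pt\we+f'(\ue)\px\we+f''(\ue)\we^2=\gamma\ue+\eps\pxx\we ,
\end{equation*}
valid pointwise since $\ue$ is smooth. By \eqref{eq:fconvex} we have $f''(\ue)\ge c$, while Lemma \ref{lm:linfty-u-1} gives the $\eps$--uniform bound $\abs{\ue}\le\norm{u_0}_{L^\infty(\R)}+C(T)$; hence, setting $M:=\abs{\gamma}\left(\norm{u_0}_{L^\infty(\R)}+C(T)\right)$, the source satisfies $\gamma\ue\le M$ and $\we$ obeys the differential inequality $\pt\we+f'(\ue)\px\we-\eps\pxx\we\le-c\we^2+M$.

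Next I would compare $\we$ with the spatially constant function
\begin{equation*}
\phi(t)=\frac{1}{ct}+\sqrt{\frac{M}{c}},\qquad t>0 .
\end{equation*}
A direct computation yields $\phi'(t)=-1/(ct^2)$ and $-c\phi(t)^2+M=-1/(ct^2)-2\sqrt{M/c}\,t^{-1}$, so that $\phi'\ge-c\phi^2+M$; thus $\phi$ is a supersolution of the inequality above. Since $\ue(t,\cdot)\in H^\ell(\R)$ with $\ell>2$ by \eqref{eq:uePe1} forces $\we(t,\cdot)\to0$ as $\abs{x}\to\infty$, and since $\phi(t)\to+\infty$ as $t\to0^+$ whereas $\we(0,\cdot)=\px u_{\eps,0}$ is bounded, the function $V:=\we-\phi$ is strictly negative near $t=0$ and for large $\abs{x}$. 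Consequently, if $\sup V>0$ it would be attained at an interior point $(t_0,x_0)$ with $t_0\in(0,T]$, where $\px V=0$, $\pxx V\le0$ and $\pt V\ge0$; but there $\we=\phi+V>\phi>0$, so $\we^2>\phi^2$ and the differential inequality forces $\pt V\le-c\we^2+M-\phi'<-c\phi^2+M-\phi'\le0$, a contradiction. Hence $V\le0$, that is
\begin{equation*}
\px\ue(t,x)\le\frac{1}{ct}+\sqrt{\frac{M}{c}}\le C(T)\left(\frac1t+1\right),\qquad 0<t<T,\ x\in\R ,
\end{equation*}
with a constant independent of $\eps$.

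Finally, integrating this pointwise bound over $[y,x]$ for $x>y$ gives $\bigl(\ue(t,x)-\ue(t,y)\bigr)/(x-y)\le C(T)(1/t+1)$, and since the subsequence $\uek\to u$ almost everywhere in $(0,T)\times\R$ by Lemma \ref{lm:conv-u-1}, this one--sided Lipschitz estimate passes to the limit for almost every $t\in(0,T)$ and almost every $x\ne y$, which is exactly \eqref{eq:stima-olei}. The main obstacle is the rigorous execution of the maximum--principle step: one has to ensure both that the supremum of $V$ is genuinely attained (using the decay at infinity provided by the $H^\ell$--regularity of $\ue$) and, more importantly, that $M$---and therefore $\phi$---is independent of $\eps$, so that the estimate is stable under the limit $\eps\to0$; the singularity of $\phi$ at $t=0$ is precisely what renders the bound independent of the uncontrolled initial slope $\px u_{\eps,0}$.
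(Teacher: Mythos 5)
Your proposal is correct and follows essentially the same route as the paper's proof: differentiate the viscous equation \eqref{eq:OHepsw} to obtain the Riccati inequality for $\px\ue$, use the $\eps$--uniform bounds \eqref{eq:fconvex} and \eqref{eq:linfty-u-1} to compare with the supersolution $\frac{1}{ct}+\sqrt{C(T)/c}$, integrate in $x$, and pass to the limit via the a.e.\ convergence of Lemma \ref{lm:conv-u-1}. The only difference is cosmetic: where the paper invokes the parabolic and ODE comparison principles abstractly, you carry out the maximum--principle step by hand on $V=\px\ue-\phi$, correctly flagging the attainment of the supremum (handled by the $H^\ell$ decay) and the $\eps$--independence of the bound.
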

\begin{proof}
Fixed $T>0$, let $\ue$ be the solution of \eqref{eq:OHepsw}, or \eqref{eq:OHepswint}. We claim that there exists a positive constant $C(T)$ such that
\begin{equation}
\label{eq:stima-ux}
\px\ue(t,x)\le C(T)\left(\frac{1}{t}+1\right), \quad t\in (0,T),\, x\in\R.
\end{equation}
Differentiating with respect to $x$ the equation in \eqref{eq:OHepswint}, we obtain
\begin{equation*}
\ptx\ue + f'(\ue)\pxx\ue + f''(\ue)(\px\ue)^2 -\eps\pxxx\ue=\gamma\ue.
\end{equation*}
Let us consider the Cauchy problem
\begin{equation}
\label{eq:PCv}
\begin{cases}
\pt v + f'(\ue)\px v + f''(\ue)v^2 -\eps\pxx v = \gamma\ue, \quad &t\in (0,T),\, x\in\R,\\
v(0,x)=\px u_{0,\eps}(x), \quad &x\in\R.
\end{cases}
\end{equation}
Clearly, the solution of \eqref{eq:PCv} is $\px\ue$.
Due to \eqref{eq:linfty-u-1} and \eqref{eq:fconvex},
\begin{align*}
\pt v + f'(\ue)\px v & -\eps\pxx v=\gamma \ue - f''(\ue)v^2\le C(T) -c v^2.
\end{align*}
Therefore, a supersolution of \eqref{eq:PCv} satisfies the following ordinary differential equation
\begin{equation}
\label{eq:equation-f}
\frac{d z}{d t} +c z^2 - C(T)=0, \quad z(0)=\norm{\px u_{0,\eps}}_{L^{\infty}(\R)}.
\end{equation}
We consider the map
\begin{equation*}
Z(t)= \frac{1}{ct}+ \sqrt{\frac{C(T)}{c}}, \quad t\in (0,T).
\end{equation*}
Observe that
\begin{equation*}
\frac{d Z}{d t} +c Z^2 - C(T)= -\frac{1}{ct^2}+ c\left(\frac{1}{ct}+ \sqrt{\frac{C(T)}{c}}\right)^2 - C(T)= \frac{2  \sqrt{\frac{C(T)}{c}}}{t}\ge 0.
\end{equation*}
Then, for every $t\in (0,T)$, $Z(t)$ is a supersolution of \eqref{eq:equation-f}.
The comparison principle for parabolic equation and the comparison principle for ordinary differential equations give
\begin{equation*}
\px\ue(t,x) \le z(t)\le Z(t)=\frac{1}{ct}+ \sqrt{\frac{C(T)}{c}}, \quad t\in (0,T),\, x\in\R,
\end{equation*}
that is \eqref{eq:stima-ux}.
Since for every $t\in (0, T)$ and $x,\,y\in\R$, $x\neq y$, thanks to \eqref{eq:stima-ux},
\begin{equation*}
\frac{\ue(t,x)-\ue(t,y)}{x-y}=\frac{1}{x-y}\int_{y}^{x} \px\ue(t,\xi)d\xi = \frac{1}{ct} + \sqrt{\frac{C(T)}{c}}\le C(T)\left (\frac{1}{t} +1\right).
\end{equation*}
\eqref{eq:convu-1} gives \eqref{eq:stima-olei}.
\end{proof}
Let us assume that there exist two bounded distributional solution $u$ and $v$ of \eqref{eq:OHw-u}, or \eqref{eq:OHw}, such that
\begin{equation}
\label{eq:olei212}
\frac{u(t,x)-u(t,y)}{x-y}\le C(T)\left(\frac{1}{t} +1\right), \quad \frac{v(t,x)-v(t,y)}{x-y}\le C(T)\left(\frac{1}{t} +1\right),
\end{equation}
for almost every $0< t< T$, $x,\,y \in \R$, $x\ne y$, and some constant $C(T)>0$. We want to prove that
\begin{equation}
\label{eq:u-uguale-v-1}
u=v \quad \textrm{a.e.} \quad \textrm{in} \quad (0,T)\times\R.
\end{equation}
Let $\phi\in C^{\infty}(\R^2)$ be a test function with compact support. Since $u$ and $v$ are distributional solutions of \eqref{eq:OHw-u}, or \eqref{eq:OHw}, we have that
\begin{align*}
&\int_{0}^{\infty}\!\!\!\!\!\int_{\R}(u\pt\phi + f(u)\px\phi)dtdx +\gamma \int_{0}^{\infty}\!\!\!\!\!\int_{\R}\phi\left(\int_{-\infty}^{x} u(t,x) dy\right)dtdx + \int_{\R}\phi(0,x)u_{0}(x)dx=0,\\
&\int_{0}^{\infty}\!\!\!\!\!\int_{\R}(v\pt\phi + f(v)\px\phi)dtdx + \gamma\int_{0}^{\infty}\!\!\!\!\!\int_{\R}\phi\left(\int_{-\infty}^{x} v(t,x) dy\right)dtdx + \int_{\R}\phi(0,x)u_{0}(x)dx=0
\end{align*}
and then
\begin{align*}
&\int_{0}^{\infty}\!\!\!\!\!\int_{\R}\left((u-v)\pt\phi +(f(u)-f(v))\px\phi\right)dtdx\\
&\quad\quad+ \gamma\int_{0}^{\infty}\!\!\!\!\!\int_{\R}\phi\left(\int_{-\infty}^{x} (u(t,y)-v(t,y)) dy\right)dtdx=0.
\end{align*}
Therefore, we have that
\begin{equation}
\label{eq:equat-int-1}
\int_{0}^{\infty}\!\!\!\!\!\int_{\R}w(\pt\phi+ b\px\phi)dsdx+ \gamma\int_{0}^{\infty}\!\!\!\!\!\int_{\R}\phi\left(\int_{-\infty}^{x}w(t,y)dy\right)dtdx=0,
\end{equation}
where
\begin{equation}
\label{eq:def-di-b}
w=u-v, \quad b(t,x)=\int_{0}^{1}f'(\theta u(t,x) + (1-\theta)v(t,x))d\theta=\frac{f(u(t,x))-f(v(t,x))}{u(t,x)-v(t,x)}.
\end{equation}
Observe that, since
\begin{equation}
\label{eq:v1230}
u,v\in L^{\infty}((0,T)\times\R),
\end{equation}
we get
\begin{equation}
\label{eq:f-infty}
\vert f(u(t,x))-f(v(t,x))\vert \le C(T) \vert u(t,x)-v(t,x)\vert,
\end{equation}
where
\begin{equation*}
C(T)=\sup_{(0,T)\times\R}\Big\{\vert f'(u)\vert + \vert f'(v)\vert\Big\}.
\end{equation*}
Therefore, for \eqref{eq:stima-olei} and \eqref{eq:f-infty}, on the function $b(t,x)$, we have the following estimates
\begin{equation}
\label{eq:stime-per-b}
\begin{split}
\norm{b}_{L^{\infty}((0,T)\times\R)}\le& C(T), \quad T>0,\\
\frac{b(t,x)-b(t,y)}{x-y}\le& C(T)\left(\frac{1}{t}+1\right), \quad x\neq y, \quad 0<t<T.
\end{split}
\end{equation}
Now, let us consider the following set:
\begin{equation*}
\Omega :=\{(x,y)\in \R^2;\quad y\le x\}.
\end{equation*}
Therefore,
\begin{align*}
\int_{0}^{\infty}\!\!\!\!\!\int_{\R}\phi\left(\int_{-\infty}^{x}w(t,y)dy\right)dsdx&=\int_{0}^{\infty}\!\!\!\!\!\int_{\Omega}\phi w(t,y)dtdxdy\\
&=\int_{0}^{\infty}\!\!\!\!\!\int_{\R}w(t,y)\left(\int_{y}^{\infty} \phi dx\right)dtdy.
\end{align*}
Hence,
\begin{equation}
\label{eq:int-phi-1}
\int_{0}^{\infty}\!\!\!\!\!\int_{\R}\phi\left(\int_{-\infty}^{x}w(t,y)dy\right)dtdx=\int_{0}^{\infty}\!\!\!\!\!\int_{\R}\Phi(t,y)w(t,y)dtdy,
\end{equation}
where
\begin{equation}
\label{eq:def-di-Phi-1}
\Phi(t,y)= \int_{y}^{\infty}\phi(t,y)dy
\end{equation}
It follows from \eqref{eq:equat-int-1} and \eqref{eq:int-phi-1} that
\begin{equation}
\label{eq:test-fu-3}
\int_{0}^{\infty}\!\!\!\!\!\int_{\R}w(\pt\phi+ b\px\phi + \gamma\Phi)dsdx=0.
\end{equation}
Fix $\displaystyle\psi \in C_{c}((0,\infty)\times\R)$ and let $\tau >0$ be such that
\begin{equation}
\label{eq:con-psi}
\supp(\psi)\subset (0,\tau)\times \R,
\end{equation}
to have \eqref{eq:u-uguale-v-1}, we have to solve the following system:
\begin{equation}
\label{eq:test-funct-1}
\begin{cases}
\pt\phi + b\px\phi= \psi-\gamma\Phi, \quad &(t,x)\in (0,\tau)\times\R\\
\px\Phi=-\phi,\quad &(t,x)\in (0,\tau)\times\R\\
\phi(\tau,x)=0,\quad & x\in\R.
\end{cases}
\end{equation}
We coin \eqref{eq:test-funct-1} the adjoint problem associated with \eqref{eq:OHepsw}.

The idea is to solve \eqref{eq:test-funct-1} and then pass from \eqref{eq:test-fu-3} to the following equation
\begin{equation}
\label{eq:test-func-4}
\int_{0}^{\infty}\!\!\!\!\!\int_{\R}w\psi dtdx=0.
\end{equation}
Unfortunately, due to the low regularity of the coefficient $b$, we cannot solve directly \eqref{eq:test-funct-1}. Hence, we regularize the first equation by smoothing the coefficient $b$ by convolution and adding an artificial viscosity term.

The use of an adjoint problem to prove uniqueness is rather common in the context of first order conservation laws, see for example \cite{CK, LX, Olei, Sm, Ta}.

Let us consider $\{\rho_{\eps}(t,x)\}_{\eps>0}$ a sequence of standard mollifiers. Define
\begin{equation*}
\be=b*\rho_{\eps}, \quad \eps>0,
\end{equation*}
where $*$ denotes the convolution in both variables $t$ and $x$.

Clearly, from \eqref{eq:assflux1}, \eqref{eq:def-di-b} and \eqref{eq:stime-per-b},
\begin{align}
\label{eq:b1}
\be\to b, &\quad \textrm{in}\quad  L^2((0,T)\times\R),\quad  T>0,\\
\label{eq:b2}
\norm{\be}_{L^{\infty}((0,T)\times\R)}\le C(T), &\quad T,\eps>0,\\
\label{eq:b3}
\px\be(t,x)\le C(T)\left(\frac{1}{t}+1\right), &\quad 0<t<T, \quad x\in\R, \quad \eps>0.
\end{align}
Now, we approximate \eqref{eq:test-funct-1} in following way:
\begin{equation}
\label{eq:test-funct-10}
\begin{cases}
\pt\phi_{\eps} + \be\px\phi_{\eps}= \psi_{\eps}-\gamma\Phi_{\eps}-\eps\pxx\phi_{\eps}, \quad &(t,x)\in (0,\tau)\times\R\\
\px\Phi_{\eps}=-\phi_{\eps},\quad &(t,x)\in (0,\tau)\times\R\\
\phi_{\eps}(\tau,x)=0,\quad & x\in\R.
\end{cases}
\end{equation}
The existence of solutions for \eqref{eq:test-funct-10} is obtained considering the following system
\begin{equation*}
\begin{cases}
\pt\phi_{\eps,\delta} + b_{\eps,\delta}\px\phi_{\eps,\delta}= \psi_{\eps,\delta}-\gamma\Phi_{\eps,\delta}-\eps\pxx\phi_{\eps,\delta}, \quad &(t,x)\in (0,\tau)\times\R\\
-\delta\pxx\Phi_{\eps,\delta}+\px\Phi_{\eps,\delta}=-\phi_{\eps,\delta},\quad &(t,x)\in (0,\tau)\times\R\\
\phi_{\eps,\delta}(\tau,x)=0,\quad & x\in\R,
\end{cases}
\end{equation*}
and sending $\delta\to 0$ (see Section \ref{sec:vv}).

Therefore, arguing as in Section \ref{sec:vv}, Theorem \ref{th:wellp}, we obtain
\begin{lemma}
\label{lm:dual-proble}
Let $\eps>0$ and suppose $\psi_{\eps}\in C^{\infty}((0,\infty)\times\R)\cap C((0,\infty);H^2(\R))$ obeys \eqref{eq:con-psi}. There exists a unique solution $\phi_{\eps}\in C^{\infty}((0,\infty)\times\R)\cap C((0,\infty);H^{\ell}(\R))$,  $\ell>2$, to the terminal value problem \eqref{eq:test-funct-10}.
\end{lemma}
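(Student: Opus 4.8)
The plan is to exploit the structural parallel between the terminal value problem \eqref{eq:test-funct-10} and the forward Cauchy problem \eqref{eq:OHepsw1} already solved in Section \ref{sec:vv}. First I would reverse time by setting $s=\tau-t$ and writing $\wt\phi_\eps(s,x)=\phi_\eps(\tau-s,x)$, and likewise $\wt\Phi_\eps,\wt\psi_\eps,\wt\be$ for the time-reversed nonlocal term, forcing and coefficient. Under this substitution the terminal condition $\phi_\eps(\tau,\cdot)=0$ becomes the initial condition $\wt\phi_\eps(0,\cdot)=0$, and \eqref{eq:test-funct-10} transforms into the forward problem
\[
\partial_s\wt\phi_\eps-\wt\be\px\wt\phi_\eps-\eps\pxx\wt\phi_\eps=\gamma\wt\Phi_\eps-\wt\psi_\eps,\qquad \px\wt\Phi_\eps=-\wt\phi_\eps,\qquad \wt\Phi_\eps(s,+\infty)=0,
\]
so the viscosity now carries the dissipative sign and we are dealing with a genuine forward parabolic equation coupled to a nonlocal relation of exactly the type treated in Section \ref{sec:vv}. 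Crucially, for each fixed $\eps>0$ the coefficient $\be=b*\rho_\eps$ is $C^\infty$ with all derivatives bounded, and $\wt\psi_\eps$ is smooth with compact support; hence the problem is \emph{linear} in the pair $(\wt\phi_\eps,\wt\Phi_\eps)$, which makes it considerably simpler than the nonlinear conservation law of Section \ref{sec:vv}.

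Next I would reproduce the elliptic regularization device: replace the nonlocal constraint by $-\delta\pxx\Phi_{\eps,\delta}+\px\Phi_{\eps,\delta}=-\phi_{\eps,\delta}$ and solve, for each fixed $0<\delta<1$, the resulting mixed parabolic--elliptic system by the existence theory of \cite{CHK:ParEll}, exactly as for \eqref{eq:OHepsw1}. I would then establish a priori bounds on $\phi_{\eps,\delta}$ and $\Phi_{\eps,\delta}$ uniform in $\delta$, following the chain of lemmas of Section \ref{sec:vv}: an $L^2$ energy estimate for $\phi_{\eps,\delta}$ obtained by multiplying the equation by $\phi_{\eps,\delta}$, integrating by parts (the transport term produces $\tfrac12\int\px\be\,\phi_{\eps,\delta}^2$, controlled by the $\eps$-dependent bound on $\px\be$) and using $\px\Phi_{\eps,\delta}=-\phi_{\eps,\delta}$ together with Gronwall, which is the analogue of Lemma \ref{lm:l2-u}; the boundary identities at $\pm\infty$ and the $L^\infty$, $L^2$ bounds on $\Phi_{\eps,\delta}$, the analogues of Lemmas \ref{lm:cns1} and \ref{lm:P-infty}; and the higher-derivative estimates obtained by multiplying by $-\eps\pxx\phi_{\eps,\delta}$ and $\eps\pxxxx\phi_{\eps,\delta}$, the analogues of Lemmas \ref{lm:34} and \ref{lm:37}. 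Because $\wt\psi_\eps$ is smooth with compact support and $\be$ is smooth, these estimates close and deliver $H^\ell$ control with $\ell>2$.

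With the uniform bounds in hand I would pass to the limit $\delta\to0$. Here the linearity is decisive: weak-$*$ and weak $L^2$ compactness of $\{\phi_{\eps,\delta}\}$ and $\{\Phi_{\eps,\delta}\}$, together with $\delta\px\Phi_{\eps,\delta}\to0$ exactly as in \eqref{eq:px-1}, suffice to pass to the limit in every term and to recover $\px\Phi_\eps=-\phi_\eps$, so no compensated compactness argument is required. This yields a solution $\phi_\eps\in C((0,\infty);H^\ell(\R))$ of the reversed problem, and undoing the time change returns a solution of \eqref{eq:test-funct-10}; interior parabolic smoothing, together with the smoothness of the coefficients and forcing, upgrades it to $C^\infty((0,\infty)\times\R)$. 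For uniqueness I would subtract two solutions: their difference solves the homogeneous linear problem with zero data, and multiplying by the difference, integrating, and applying the nonlocal relation and Gronwall — precisely the computation behind \eqref{eq:l2-stability} — forces the difference to vanish.

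I expect the main obstacle to be the uniform-in-$\delta$ control of the nonlocal term $\Phi_{\eps,\delta}$, that is, reproducing Lemma \ref{lm:P-infty}. As there, the $L^\infty$ bound will emerge only from a quadratic inequality of the schematic form $\norm{\Phi_{\eps,\delta}}_{L^\infty}^2\le C\norm{\Phi_{\eps,\delta}}_{L^\infty}+C$, and closing it requires careful bookkeeping of the boundary contributions at $\pm\infty$ and of the integrability of $\Phi_{\eps,\delta}$; the compact support of $\psi_\eps$ and the vanishing terminal datum are what guarantee the decay needed for these boundary terms to drop out. I note finally that the one-sided bound \eqref{eq:b3} on $\px\be$ plays no role in the present existence and regularity statement — the smoothness of $\be$ for fixed $\eps$ is all that is used — and becomes essential only when one later seeks estimates uniform in $\eps$.
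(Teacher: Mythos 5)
Your proposal is correct and takes essentially the same route as the paper, which itself only sketches the argument: the authors introduce exactly the same $\delta$-regularized parabolic--elliptic system, invoke the existence theory of \cite{CHK:ParEll}, and obtain the lemma by ``arguing as in Section \ref{sec:vv}, Theorem \ref{th:wellp}'' after sending $\delta\to 0$. Your added observations --- the time reversal (which the paper performs immediately afterwards in \eqref{eq:v-q}--\eqref{eq:beta-psi}) and the remark that linearity lets the $\delta\to0$ limit go through by weak compactness alone, without compensated compactness --- are correct refinements of that same plan.
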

Since we feel more comfortable with initial value problems, we define
\begin{align}
\label{eq:v-q}
\we(t,x)= \phi_{\eps}(\tau-t, x), &\quad Q_{\eps}(t,x)=\Phi_{\eps}(\tau-t, x)\\
\label{eq:beta-psi}
\betas(t,x)=\be(\tau-t,x), &\quad \psit(\tau-t, x)=-\psi_{\eps}(\tau-t,x),
\end{align}
for $(t,x)\in (0,\tau)\times\R$. Due to Lemma \ref{lm:dual-proble}, $\we$ is then the unique smooth solution of the initial value problem
\begin{equation}
\label{eq:equ-v}
\begin{cases}
\pt \we - \betas\px \we= \psit+\gamma Q_{\eps}+\eps\pxx \we, \quad &(t,x)\in (0,\tau)\times\R\\
\px Q_{\eps}=-\we,\quad &(t,x)\in (0,\tau)\times\R\\
\we(0,x)=0,\quad & x\in\R.
\end{cases}
\end{equation}
Denoting with $C(\tau)$ the constants which depends on $\tau$, thanks to \eqref{eq:b2}, \eqref{eq:b3} and \eqref{eq:beta-psi}, we get
\begin{align}
\label{eq:beta1}
\norm{\betas}_{L^{\infty}((0,\tau)\times\R)}&\le C(\tau), \quad \eps>0,\\
\label{eq:beta2}
\px\betas(t,x)&\le C(\tau)\left(\frac{1}{\tau -t}+1\right), \quad (t,x)\in (0,\tau)\times\R, \quad \eps >0.
\end{align}
We prove our key estimates.
\begin{lemma}
Let $\psi_{\eps}\in C^{\infty}((0,\infty)\times\R)\cap C((0,\infty);H^{2}(\R))\cap L^{\infty}((0,\infty);H^{2}(\R))$ be a function satisfying \eqref{eq:con-psi}.
Then, using the notation introduced in \eqref{eq:v-q} and \eqref{eq:beta-psi}, there exists a function $C(\tau)>0$, independent on $\eps$ such that
\begin{equation}
\label{eq:v-213}
\begin{split}
\norm{\we(t,\cdot)}^2_{H^1(\R)}&+2\eps \int_{0}^{t}\norm{\px\we(s,\cdot)}^2_{H^1(\R)}ds\\
\le& e^{C(\tau)\tau}\left(\frac{\tau}{\tau-t}\right)^{C(\tau)}\int_{0}^{t}\norm{\psit(s,\cdot)}^2_{H^1(\R)}ds,
\end{split}
\end{equation}
for every $t\in(0,\tau)$.
In particular, we have that
\begin{equation}
\label{eq:v-infty}
\norm{\we(t,\cdot)}_{L^{\infty}(\R)}\le\sqrt{2}\left( e^{C(\tau)\tau}\left(\frac{\tau}{\tau-t}\right)^{C(\tau)}\int_{0}^{t}\norm{\psit(s,\cdot)}^2_{H^1(\R)}ds\right)^{\frac{1}{2}}.
\end{equation}
\end{lemma}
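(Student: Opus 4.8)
The plan is to establish \eqref{eq:v-213} by an $H^1(\R)$ energy estimate for the forward parabolic problem \eqref{eq:equ-v}, in which the one-sided bound \eqref{eq:beta2} on $\px\betas$ enters \emph{at top order with the favourable sign}, and then to deduce \eqref{eq:v-infty} from \eqref{eq:v-213} by the Gagliardo--Nirenberg inequality $\norm{\we(t,\cdot)}^2_{L^\infty(\R)}\le 2\norm{\we(t,\cdot)}_{L^2(\R)}\norm{\px\we(t,\cdot)}_{L^2(\R)}\le\norm{\we(t,\cdot)}^2_{H^1(\R)}$.

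A preliminary step disposes of the nonlocal coupling. Differentiating the first equation of \eqref{eq:equ-v} in $x$, using $\px Q_\eps=-\we$ and integrating over $\R$, all boundary terms vanishing by decay, gives $\gamma\int_\R\we(t,x)\,dx=0$, exactly as in Lemma \ref{lm:u-null}; hence $\int_\R\we\,dx=0$, so $Q_\eps(t,\cdot)\in L^2(\R)$ with $Q_\eps(t,\pm\infty)=0$. Consequently, when \eqref{eq:equ-v} is multiplied by $\we$ the source $Q_\eps$ drops out, since $\gamma\int_\R Q_\eps\we\,dx=-\tfrac{\gamma}{2}\int_\R\px(Q_\eps^2)\,dx=0$, and at the first-derivative level the analogous term $-\gamma\int_\R\we\px\we\,dx=0$ as well.

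I would then run two coupled energy identities. Multiplying \eqref{eq:equ-v} by $\we$ and integrating, the viscosity produces the dissipation $\eps\norm{\px\we}^2_{L^2(\R)}$ and the convective contribution is $\int_\R\betas\we\px\we\,dx$; I keep it unintegrated and bound it by Cauchy--Schwarz and Young using only the uniform bound \eqref{eq:beta1}, $\int_\R\betas\we\px\we\,dx\le\tfrac12 C(\tau)^2\norm{\we}^2_{L^2(\R)}+\tfrac12\norm{\px\we}^2_{L^2(\R)}$. Differentiating the equation in $x$, multiplying by $\px\we$ and integrating, the viscosity produces $\eps\norm{\pxx\we}^2_{L^2(\R)}$, while the convective term becomes the top-order expression $\tfrac12\int_\R\px\betas\,(\px\we)^2\,dx$, which now has the right sign and is dominated, via \eqref{eq:beta2}, by $\tfrac12 C(\tau)\big(\tfrac1{\tau-t}+1\big)\norm{\px\we}^2_{L^2(\R)}$. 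Adding the two identities and treating the forcing by Young, $\int_\R\we\psit\,dx+\int_\R\px\we\,\px\psit\,dx\le\tfrac12\norm{\we}^2_{H^1(\R)}+\tfrac12\norm{\psit}^2_{H^1(\R)}$, I expect the differential inequality
\[
\frac{d}{dt}\norm{\we(t,\cdot)}^2_{H^1(\R)}+2\eps\norm{\px\we(t,\cdot)}^2_{H^1(\R)}\le C(\tau)\Big(\frac1{\tau-t}+1\Big)\norm{\we(t,\cdot)}^2_{H^1(\R)}+\norm{\psit(t,\cdot)}^2_{H^1(\R)},
\]
all constants being independent of $\eps$. Here it is essential that the spurious $\tfrac12\norm{\px\we}^2_{L^2(\R)}$ coming from the lower-order convective term is absorbed into the $H^1$ norm on the right-hand side, and not into the vanishing dissipation $\eps\norm{\px\we}^2_{L^2(\R)}$ — this is precisely what keeps the estimate uniform in $\eps$.

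Since $\we(0,\cdot)=0$, Grönwall's lemma then yields \eqref{eq:v-213}, the integrating factor being $\exp\int_0^t C(\tau)\big(\tfrac1{\tau-s}+1\big)\,ds=e^{C(\tau)t}\big(\tfrac{\tau}{\tau-t}\big)^{C(\tau)}$, which is exactly the weight appearing there and is finite for each $t<\tau$. The one genuinely delicate point is the top-order convective term: it is only because the flux is strictly convex that \eqref{eq:beta2} is available, and only the upper (one-sided) bound on $\px\betas$ is needed, entering $\tfrac12\int_\R\px\betas(\px\we)^2\,dx$ with the correct sign; the price is the time-singular coefficient $\tfrac1{\tau-t}$, whose integrability on $(0,t)$ is what makes the Grönwall factor — and hence the whole estimate — finite up to the terminal time $\tau$.
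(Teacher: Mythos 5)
Your proposal is correct and follows essentially the same route as the paper's proof: the coupled $L^2$ and first-derivative energy identities, the cancellation $\gamma\int_\R Q_\eps\we\,dx=0$ via $\px Q_\eps=-\we$, the one-sided bound \eqref{eq:beta2} applied to the top-order term $\int_\R\px\betas(\px\we)^2\,dx$ with the favourable sign, Gronwall with integrating factor $e^{C(\tau)t}\left(\frac{\tau}{\tau-t}\right)^{C(\tau)}$ using $\we(0,\cdot)=0$, and interpolation $\norm{\we}^2_{L^\infty(\R)}\le 2\norm{\we}_{L^2(\R)}\norm{\px\we}_{L^2(\R)}$ for \eqref{eq:v-infty}. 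Your preliminary verification that $\int_\R\we\,dx=0$, so that $Q_\eps$ vanishes at $\pm\infty$, is a small extra justification of the boundary-term cancellation that the paper simply asserts.
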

\begin{proof}
We begin by proving that \eqref{eq:v-213} holds true.
Multiplying the first equation of \eqref{eq:equ-v} by $v$, an integration on $\R$ gives
\begin{equation}
\label{eq:v-l2}
\frac{d}{dt}\int_{\R}\we^2 dx=2\int_{\R}\betas \we\px\we dx + 2\int_{\R}\we\psit dx+ 2\gamma\int_{\R} Q_{\eps} \we dx + 2\eps \int_{\R}\we\pxx \we dx.
\end{equation}
Thanks the second equation of \eqref{eq:equ-v}, we have that
\begin{equation}
\label{eq:v-media-nulla}
2\gamma\int_{\R} Q_{\eps}(t,x) \we(t,x) dx = -2\gamma\int_{\R}Q_{\eps}(t,x)\px Q_{\eps}(t,x) dx =0.
\end{equation}
Therefore, it follows from \eqref{eq:beta1}, \eqref{eq:v-l2}, \eqref{eq:v-media-nulla} and the Young inequality that
\begin{equation}
\label{eq:v-21}
\begin{split}
&\frac{d}{dt}\int_{\R}\we^2 dx+2\eps\int_{\R}(\px \we)^2 dx\\
&\qquad = 2\int_{\R}\betas \we\px\we dx + 2\int_{\R}\we\psit dx\\
&\qquad \le 2\int_{\R}\vert\betas\vert\vert \we\vert\vert\px \we \vert dx + 2\int_{\R}\vert \we\vert\vert \psit\vert dx \\
&\qquad \le 2C(\tau)\int_{\R}\vert \we\vert\vert\px \we \vert dx+\int_{\R}\we^2 dx + \int_{\R} \psit^2 dx\\
&\qquad \le C(\tau)\int_{\R}\we^2 dx + C(\tau)\int_{\R}(\px \we)^2 dx + \int_{\R} \psit^2 dx.
\end{split}
\end{equation}
Differentiating with respect to $x$ the first equation of \eqref{eq:equ-v}, we get
\begin{equation}
\label{eq:vx-1}
\ptx \we =\px\betas\px \we + \betas\pxx \we + \px\psit + \gamma\px Q_{\eps} +\eps\pxxx \we.
\end{equation}
The second equation of \eqref{eq:equ-v} and \eqref{eq:vx-1} give
\begin{equation}
\label{eq:vx-2}
\ptx \we =\px\betas\px we + \betas\pxx \we + \px\psit - \gamma \we +\eps\pxxx \we
\end{equation}
Multiplying \eqref{eq:vx-2} by $\px \we$, we obtain that
\begin{align*}
\int_{\R}\ptx \we\px \we dx=&\int_{\R}\px\betas(\px \we)^2 dx +\int_{\R}\betas \pxx \we\px \we dx+ \int_{\R} \px\psit \px \we dx\\
&-\gamma\int_{\R}\we\px \we dx +\eps \int_{\R}\pxxx \we\px \we dx.
\end{align*}
Since,
\begin{align*}
\int_{\R}\ptx \we\px \we dx=&\frac{d}{dt}\left(\frac{1}{2}\int_{\R}(\px \we)^2 dx \right),\\
\int_{\R}\betas \pxx \we\px \we dx=&-\frac{1}{2}\int_{\R}\px\betas(\px \we)^2dx,\\
-\gamma\int_{\R}\we\px\we  dx&=0,
\end{align*}
due to \eqref{eq:beta2} and the Young inequality, we have that
\begin{equation}
\label{eq:v-22}
\begin{split}
&\frac{d}{dt}\int_{\R}(\px \we)^2 dx+2\eps\int_{\R}(\pxx \we)^2 dx\\
&\qquad = \int_{\R}\px\betas(\px \we)^2 dx +2 \int_{\R} \px\psit \px \we dx \\
&\qquad \le  C(\tau)\left(\frac{1}{\tau -t}+1\right)\int_{\R}(\px \we)^2 dx +2\int_{\R} \vert \px\psit\vert\vert \px \we\vert dx\\
&\qquad \le C(\tau)\left(\frac{1}{\tau -t}+1\right)\int_{\R}(\px \we)^2 dx+\int_{\R}(\px\psit)^2 dx + \int_{\R}(\px \we)^2 dx\\
&\qquad \le C(\tau)\left(\frac{1}{\tau -t}+1\right)\int_{\R}(\px \we)^2 dx+\int_{\R}(\px\psit)^2 dx.
\end{split}
\end{equation}
Adding \eqref{eq:v-21} and \eqref{eq:v-22}, we obtain that
\begin{align*}
&\frac{d}{dt}\norm{\we(t,\cdot)}^2_{H^1(\R)}+2\eps\norm{\px \we(t,\cdot)}^2_{H^1(\R)}\\
&\qquad \le C(\tau)\norm{\we(t,\cdot)}^2_{L^2(\R)}+C(\tau)\norm{\px \we(t,\cdot)}^2_{L^2(\R)}\\
&\qquad\quad +C(\tau)\left(\frac{1}{\tau -t}+1\right)\norm{\px \we(t,\cdot)}^2_{L^2(\R)}+ \norm{\psit(t,\cdot)}^2_{H^1(\R)}\\
&\qquad\le C(\tau)\norm{\we(t,\cdot)}^2_{L^2(\R)}+C(\tau)\left(\frac{1}{\tau -t}+1\right)\norm{\px \we(t,\cdot)}^2_{L^2(\R)}\\
&\qquad \quad + \norm{\psit(t,\cdot)}^2_{H^1(\R)}+C(\tau)\left(\frac{1}{\tau -t}+1\right)\norm{\we(t,\cdot)}^2_{L^2(\R)}.
\end{align*}
Therefore,
\begin{equation}
\label{eq:v-24}
\begin{split}
&\frac{d}{dt}\norm{\we(t,\cdot)}^2_{H^1(\R)}+2\eps\norm{\px \we(t,\cdot)}^2_{H^1(\R)}\\
&\qquad \le C(\tau)\left(\frac{1}{\tau -t}+1\right)\norm{\we(t,\cdot)}^2_{H^1(\R)}+\norm{\psit(t,\cdot)}^2_{H^1(\R)}.
\end{split}
\end{equation}
Let $f(t)$ be a nonnegative, absolutely continuous function on $[a,b]$, satisfying for $a.e.$ $t$ the inequality
\begin{equation*}
f'(t)+g(t) \le k(t)f(t) +h(t),
\end{equation*}
where $k(t)$, $g(t)$, $h(t)$ are nonnegative functions on $[a,b]$. Then, the Gronwall inequality says that
\begin{equation*}
f(t)+\int_{a}^{b}e^{\int_{s}^{t}k(s')ds'} g(s)ds \le e^{\int_{a}^{t}k(s)ds }\left(f(a)+\int_{a}^{t}h(s)ds\right), \quad a\le t\le b.
\end{equation*}
For \eqref{eq:v-24}, $k(t)=C(\tau)\left(\frac{1}{\tau -t}+1\right)$ and thus $e^{\int_{s}^{t}k(s')ds'}=e^{C(\tau)(t-s)}\left(\frac{\tau-s}{\tau-t}\right)^{C(\tau)}$, so we obtain, keeping in mind that $\px v(0,\cdot)=0$,
\begin{equation}
\label{eq:v-321}
\begin{split}
\norm{\we(t,\cdot)}^2_{H^1(\R)}+&2\eps \int_{0}^{t} e^{C(\tau)(t-s)}\left(\frac{\tau-s}{\tau-t}\right)^{C(\tau)}\norm{\px \we(s,\cdot)}^2_{H^1(\R)}ds\\
\le &e^{C(\tau)(t-s)}\left(\frac{\tau}{\tau-t}\right)^{C(\tau)}\int_{0}^{t}\norm{\psit(s,\cdot)}^2_{H^1(\R)}ds.
\end{split}
\end{equation}
Since $s\le t$, then $\tau-s \ge \tau -t$. Therefore,
\begin{equation}
\label{eq:v-234}
1\le \frac{\tau-s}{\tau-t}.
\end{equation}
Thus, \eqref{eq:v-321} and \eqref{eq:v-321} give \eqref{eq:v-213}.

Finally, we prove \eqref{eq:v-infty}.
Due to \eqref{eq:v-213} and the H\"older inequality, we get
\begin{align*}
\we^2(t,x)=&2\int_{-\infty}^{x}\we(t,y)\px \we(t,y) dy\le 2\int_{\R}\vert \we(t,y)\vert \vert \px \we(t,y)\vert dx\\
\le& 2\norm{\we(t,\cdot)}_{L^2(\R)}\norm{\px \we(t,\cdot)}_{L^2(\R)}\le e^{C(\tau)\tau}\left(\frac{\tau}{\tau-t}\right)^{C(\tau)}\int_{0}^{t}\norm{\psit(s,\cdot)}^2_{H^1(\R)}ds.
\end{align*}
Therefore,
\begin{equation}
\label{eq:v-325}
\vert \we(t,x)\vert \le \sqrt{2}\left(e^{C(\tau)\tau}\left(\frac{\tau}{\tau-t}\right)^{C(\tau)}\int_{0}^{t}\norm{\psit(s,\cdot)}^2_{H^1(\R)}ds\right)^{\frac{1}{2}}.
\end{equation}
\eqref{eq:v-infty} follows from \eqref{eq:v-325}.
\end{proof}
\begin{lemma}
Let $\psi_{\eps}\in C^{\infty}((0,\infty)\times\R)\cap C((0,\infty);H^{2}(\R))\cap L^{\infty}((0,\infty);H^{2}(\R))$ be a function satisfying \eqref{eq:con-psi}.
Then, using the notation introduced in \eqref{eq:v-q} and \eqref{eq:beta-psi}, there exists a function $C(\tau)>0$, independent on $\eps$ such that
\begin{equation}
\label{eq:pxv}
\norm{\px \we(t,\cdot)}_{L^{\infty}(\R)}\le C(\tau)\tau\left(\frac{\tau}{\tau-t}\right)^{C(\tau)}e^{C(\tau)\tau}.
\end{equation}
\end{lemma}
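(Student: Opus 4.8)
The goal is to upgrade the $H^1$ control of $\we$ obtained in \eqref{eq:v-213} to an $L^\infty$ bound on $\px\we$. The natural energy route --- estimating $\norm{\pxx\we}_{L^2(\R)}$ and then invoking $\norm{\px\we}_{L^\infty(\R)}^2\le 2\norm{\px\we}_{L^2(\R)}\norm{\pxx\we}_{L^2(\R)}$ --- is blocked here: differentiating the first equation of \eqref{eq:equ-v} twice and testing against $\pxx\we$ produces a term $\int_\R \pxx\betas\,\px\we\,\pxx\we\,dx$, and $\pxx\betas$ is not controlled uniformly in $\eps$, since \eqref{eq:beta2} furnishes only a one-sided bound on $\px\betas$. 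I would therefore argue directly on the parabolic equation \eqref{eq:vx-2} already satisfied by $V:=\px\we$, in the spirit of the comparison argument used for Theorem \ref{th:olei}.

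Viewing \eqref{eq:vx-2} as a linear parabolic equation for $V$,
\begin{equation*}
\pt V - \betas\px V - \eps\pxx V = (\px\betas)\,V + \px\psit - \gamma\we,
\end{equation*}
I would first control the inhomogeneity. In one space dimension $H^2(\R)\hookrightarrow W^{1,\infty}(\R)$, so $\norm{\px\psit(t,\cdot)}_{L^\infty(\R)}$ is bounded through the hypothesis on $\psi_\eps$, while $\norm{\we(t,\cdot)}_{L^\infty(\R)}$ is bounded by \eqref{eq:v-infty}; together this gives $\norm{\px\psit(t,\cdot)-\gamma\we(t,\cdot)}_{L^\infty(\R)}\le C(\tau)\bigl(\tfrac{\tau}{\tau-t}\bigr)^{C(\tau)}=:N(t)$. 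Next, using the one-sided bound $\px\betas\le C(\tau)\bigl(\tfrac{1}{\tau-t}+1\bigr)=:a(t)$ from \eqref{eq:beta2}, I would compare $V$ with the spatially homogeneous function $z(t)$ solving $z'=a(t)z+N(t)$, $z(0)=0$. Since $z\ge 0$ and $\px\betas\le a(t)$, the function $z$ is a supersolution of the equation for $V$; because $\we(0,\cdot)=0$ forces $V(0,\cdot)=0$ and $\px\we$ decays at infinity (as $\we\in H^\ell(\R)$), the parabolic comparison principle yields $V\le z$ (after the substitution $\tilde V=e^{-\int_0^t a(r)dr}V$, whose zeroth-order coefficient $\px\betas-a(t)\le 0$ has the good sign, this is the standard maximum principle). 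Applying the same argument to $-V$, whose lower-order coefficient is again $\px\betas\le a(t)$, gives $-V\le z$, hence $\abs{\px\we(t,x)}\le z(t)$ for all $x\in\R$.

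It then remains to estimate $z(t)=\int_0^t e^{\int_s^t a(r)\,dr}N(s)\,ds$. The integrating factor is exactly the one met in \eqref{eq:v-321}, namely $e^{\int_s^t a(r)dr}=e^{C(\tau)(t-s)}\bigl(\tfrac{\tau-s}{\tau-t}\bigr)^{C(\tau)}\le e^{C(\tau)\tau}\bigl(\tfrac{\tau}{\tau-t}\bigr)^{C(\tau)}$, since $0\le s\le t<\tau$ gives $\tau-s\le\tau$. As the remaining $s$-dependence of the integrand is integrable on $(0,t)$ and $t\le\tau$, the integral is bounded by $C(\tau)\tau\,e^{C(\tau)\tau}\bigl(\tfrac{\tau}{\tau-t}\bigr)^{C(\tau)}$ after renaming the generic constant $C(\tau)$, which is precisely \eqref{eq:pxv}. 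The one genuine difficulty is the absence of a two-sided bound on $\px\betas$ and of any uniform bound on $\pxx\betas$; the whole point of routing the estimate through the maximum principle is that only the one-sided Oleinik bound \eqref{eq:beta2} --- available uniformly in $\eps$ --- is needed, and it suffices for both $V$ and $-V$.
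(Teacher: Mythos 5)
Your proof is correct, but it takes a genuinely different route from the paper's. The paper never invokes a maximum principle here: it runs an $L^p$ energy argument on $\px\we$ for even $p$, computing $\frac{d}{dt}\norm{\px\we(t,\cdot)}^p_{L^p(\R)}$ from \eqref{eq:vx-2}, integrating by parts so that the transport terms combine into $(p-1)\int_\R\px\betas(\px\we)^p\,dx$ --- controlled by the one-sided bound \eqref{eq:beta2} alone, since $(\px\we)^p\ge0$ for even $p$ --- while the viscous term contributes the good sign $-\eps p(p-1)\int_\R(\px\we)^{p-2}(\pxx\we)^2\,dx$; it then bounds $\norm{\we(t,\cdot)}_{L^p(\R)}$ by interpolating between \eqref{eq:v-213} and \eqref{eq:v-infty}, applies Gronwall with the same singular integrating factor $e^{C(\tau)(t-s)}\left(\frac{\tau-s}{\tau-t}\right)^{C(\tau)}$ that you use, obtains a bound uniform in $p$, and sends $p\to\infty$. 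Your comparison argument reaches the same conclusion more directly, transporting the scheme of Theorem \ref{th:olei} to the adjoint problem: you correctly observe that only the one-sided Oleinik bound on $\px\betas$ is needed for both $\px\we$ and $-\px\we$ (the sign condition falls on the coefficient multiplying the nonnegative barrier $z$, not on the solution), that $H^2(\R)\hookrightarrow W^{1,\infty}(\R)$ controls $\px\psit$, and that the decay of $\px\we$ coming from $\we(t,\cdot)\in H^\ell(\R)$, $\ell>2$, legitimizes comparison on the unbounded domain; since for fixed $\eps$ the mollified coefficients $\betas$, $\px\betas$ are smooth and bounded, the classical principle applies, and uniformity in $\eps$ enters only through $a(t)$ and $N(t)$. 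What each approach buys: yours is shorter, avoids the $p$-dependent bookkeeping and the interpolation step $\norm{\we}_{L^p}\le F_1^{(p-2)/p}F_2^{1/p}$, and makes transparent that \eqref{eq:pxv} is the Oleinik mechanism itself; the paper's $L^p$ route stays entirely within the energy/Gronwall framework already set up for \eqref{eq:v-213}, requires no pointwise comparison machinery or discussion of behavior at infinity, and yields uniform $L^p$ control of $\px\we$ for every even $p$ as a byproduct. Both proofs tacitly use the same uniform-in-$\eps$ bound on $\norm{\psi_\eps}_{L^\infty((0,\infty);H^2(\R))}$ (the paper's $\alpha_1$, your $N(t)$). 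Note only that bounding the integrating factor and $N(s)$ separately yields the exponent $2C(\tau)$ rather than $C(\tau)$; combining them as $\left(\frac{\tau-s}{\tau-t}\right)^{C(\tau)}\left(\frac{\tau}{\tau-s}\right)^{C(\tau)}=\left(\frac{\tau}{\tau-t}\right)^{C(\tau)}$ avoids even this cosmetic doubling, which is in any case harmless with generic constants $C(\tau)$.
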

\begin{proof}
Let $p\in \N\setminus \{0\}$ be even. Thanks to \eqref{eq:vx-2},
\begin{align*}
&\frac{d}{dt}\norm{\px \we(t,\cdot)}^{p}_{L^{p}(\R)}= \frac{d}{dt}\int_{\R}(\px \we)^p dx=p\int_{\R}(\px \we)^{p-1}\ptx \we dx\\
&\qquad =p\int_{\R}\px\betas (\px \we)^p dx +p \int_{\R}\betas\pxx \we (\px \we)^{p-1} dx +p\int_{\R}\px\psit(\px \we)^{p-1} dx \\
&\qquad\quad -p\gamma\int_{\R}\we(\px \we)^{p-1} dx +\eps p\int_{\R}\pxxx \we (\px \we)^{p-1} dx\\
&\qquad =(p-1)\int_{\R}\px\betas (\px \we)^p dx +p\int_{\R}\px\psit(\px \we)^{p-1} dx-p\gamma \int_{\R}\we(\px \we)^{p-1} dx\\
&\qquad\quad -p(p-1)\eps\int_{\R}(\px \we)^{p-2}(\pxx \we)^2 dx,
\end{align*}
that is,
\begin{align*}
\frac{d}{dt}\norm{\px \we(t,\cdot)}^{p}_{L^{p}(\R)}\le &(p-1)\int_{\R}\px\betas (\px \we)^p dx\\
&+p\int_{\R}\px\psit(\px \we)^{p-1} dx-p\gamma \int_{\R}\we(\px \we)^{p-1} dx.
\end{align*}
Due to \eqref{eq:beta2} and the H\"older inequality, we get
\begin{align*}
\frac{d}{dt}\norm{\px \we(t,\cdot)}^{p}_{L^{p}(\R)}\le &(p-1)C(\tau)\left(\frac{1}{\tau-t}+1\right)\norm{\px \we(t,\cdot)}^{p}_{L^{p}(\R)}\\
&+p\int_{\R}\vert\px\psit\vert\vert(\px \we)^{p-1}\vert dx+p\gamma \int_{\R}\vert \we\vert\vert(\px \we)^{p-1}\vert dx \\
\le& (p-1)C(\tau)\left(\frac{1}{\tau-t}+1\right)\norm{\px \we(t,\cdot)}^{p}_{L^{p}(\R)}\\
&+ p\norm{\px\psit(t,\cdot)}_{L^{p}(\R)}\norm{(\px \we(t,\cdot))^{p-1}}_{L^{\frac{p}{p-1}}(\R)}\\
&+ p\gamma \norm{\we(t,\cdot)}_{L^{p}(\R)}\norm{(\px \we(t,\cdot))^{p-1}}_{L^{\frac{p}{p-1}}(\R)}.
\end{align*}
Since
\begin{equation*}
\norm{\px\psit(t,\cdot)}_{L^{p}(\R)}\le \alpha_{1},
\end{equation*}
where $\alpha_{1}$ is a positive constant which does not depend on $\eps$, we have that
\begin{equation}
\label{eq:v32}
\begin{split}
\frac{d}{dt}\norm{\px \we(t,\cdot)}^{p}_{L^{p}(\R)}\le & (p-1)C(\tau)\left(\frac{1}{\tau-t}+1\right)\norm{\px \we(t,\cdot)}^{p}_{L^{p}(\R)}\\
&+p\alpha_{1}\norm{\px \we(t,\cdot)}^{p-1}_{L^{p}(\R)}\\
&+p\gamma \norm{\we(t,\cdot)}_{L^{p}(\R)}\norm{\px \we(t,\cdot)}^{p-1}_{L^{p}(\R)}.
\end{split}
\end{equation}
Hence,
\begin{align*}
p\norm{\px \we(t,\cdot)}^{p-1}_{L^{p}(\R)}&\frac{d}{dt} \norm{\px \we(t,\cdot)}_{L^{p}(\R)}\\
\le & (p-1)C(\tau)\left(\frac{1}{\tau-t}+1\right)\norm{\px \we(t,\cdot)}^{p}_{L^{p}(\R)}\\
& + p\alpha_{1}\norm{\px \we(t,\cdot)}^{p-1}_{L^{p}(\R)}+p\gamma \norm{\we(t,\cdot)}_{L^{p}(\R)}\norm{\px \we(t,\cdot)}^{p-1}_{L^{p}(\R)},
\end{align*}
that is
\begin{equation}
\begin{split}
\label{eq:v34}
\frac{d}{dt} \norm{\px \we(t,\cdot)}_{L^{p}(\R)}\le &\frac{(p-1)}{p}C(\tau)\left(\frac{1}{\tau-t}+1\right)\norm{\px \we(t,\cdot)}_{L^{p}(\R)}\\
&+\alpha_{1} + \gamma \norm{\we(t,\cdot)}_{L^{p}(\R)}.
\end{split}
\end{equation}
Due to \eqref{eq:v-213} and \eqref{eq:v-infty},
\begin{align*}
\int_{\R}\vert \we(t,x) \vert ^p dx=& \int_{\R}\vert \we(t,x)\vert^{p-2}\we^2(t,x)dx\\
\le&\norm{\we(t,\cdot)}^{p-2}_{L^{\infty}(\R)}\norm{\we(t,\cdot)}^2_{L^2(\R)}\le (F_{1}(\tau,t,\psit))^{p-2}(F_{2}(\tau,t,\psit)),
\end{align*}
where
\begin{equation}
\label{eq:v45}
\begin{split}
F_{1}(\tau,t,\psit)&=\sqrt{2}\left( e^{C(\tau)\tau}\left(\frac{\tau}{\tau-t}\right)^{C(\tau)}\int_{0}^{t}\norm{\psit(s,\cdot)}^2_{H^1(\R)}ds\right)^{\frac{1}{2}},\\
F_{2}(\tau,t,\psit)&=e^{C(\tau)\tau}\left(\frac{\tau}{\tau-t}\right)^{C(\tau)}\int_{0}^{t}\norm{\psit(s,\cdot)}^2_{H^1(\R)}ds.
\end{split}
\end{equation}
Thus,
\begin{equation}
\label{eq:v40}
\norm{\we(t,\cdot)}_{L^{p}(\R)}\le (F_{1}(\tau,t,\psit))^{\frac{p-2}{p}}(F_{2}(\tau,t,\psit))^{\frac{1}{p}}.
\end{equation}
Hence, \eqref{eq:v34} and \eqref{eq:v40} give
\begin{equation}
\begin{split}
\frac{d}{dt} \norm{\px \we(t,\cdot)}_{L^{p}(\R)}\le &\frac{(p-1)}{p}C(\tau)\left(\frac{1}{\tau-t}+1\right)\norm{\px \we(t,\cdot)}_{L^{p}(\R)}\\
&+\alpha_{1} + \gamma (F_{1}(\tau,t,\psit))^\frac{p-2}{p}(F_{2}(\tau,t,\psit))^{\frac{1}{p}}.
\end{split}
\end{equation}
Keeping in mind that $\px v(0,\cdot)=0$, the Gronwall Lemma gives
\begin{align*}
&\norm{\px \we(t,\cdot)}_{L^{p}(\R)}\le \alpha_{1} e^{\frac{p-1}{p}C(\tau)\left(\log\left(\frac{\tau}{\tau-t}\right)+t\right)}\int_{0}^{t} e^{-\frac{p-1}{p}C(\tau)\left(\log\left(\frac{\tau}{\tau-s}\right)+s\right)}ds\\
&\qquad+\gamma e^{\frac{p-1}{p}C(\tau)\left(\log\left(\frac{\tau}{\tau-t}\right)+t\right)}\int_{0}^{t}e^{-\frac{p-1}{p}C(\tau)\left(\log\left(\frac{\tau}{\tau-s}\right)+s\right)}\\
&\qquad\quad\cdot (F_{1}(\tau,s,\psit))^\frac{p-2}{p}(F_{2}(\tau,s,\psit))^{\frac{1}{p}}ds\\
&\quad \le \alpha_{1} \left(\frac{\tau}{\tau-t}\right)^{C(\tau)\frac{p-1}{p}}e^{\frac{p-1}{p}C(\tau)t}\int_{0}^{t}e^{-\frac{p-1}{p}C(\tau)s}\left(\frac{\tau-s}{\tau}\right)^{\frac{p-1}{p}C(\tau)}ds\\     &\qquad +\gamma\left(\frac{\tau}{\tau-t}\right)^{C(\tau)\frac{p-1}{p}}e^{\frac{p-1}{p}C(\tau)t}\int_{0}^{t}e^{-\frac{p-1}{p}C(\tau)s}\left(\frac{\tau-s}{\tau}\right)^{\frac{p-1}{p}C(\tau)}\\
&\qquad\quad\cdot (F_{1}(\tau,s,\psit))^\frac{p-2}{p}(F_{2}(\tau,s,\psit))^{\frac{1}{p}}ds\\
&\quad \le \alpha_{1} \left(\frac{\tau}{\tau-t}\right)^{C(\tau)\frac{p-1}{p}}e^{\frac{p-1}{p}C(\tau)\tau}\tau+\gamma\left(\frac{\tau}{\tau-t}\right)^{C(\tau)\frac{p-1}{p}}e^{\frac{p-1}{p}C(\tau)\tau}\\
&\qquad \quad\cdot \int_{0}^{t}\left(\frac{\tau-s}{\tau}\right)^{\frac{p-1}{p}C(\tau)} (F_{1}(\tau,s,\psit))^{\frac{p-2}{p}}(F_{2}(\tau,s,\psit))^{\frac{1}{p}}ds.
\end{align*}
We observe that, for \eqref{eq:v45},
\begin{align*}
&\int_{0}^{t}\left(\frac{\tau-s}{\tau}\right)^{\frac{p-1}{p}C(\tau)}(F_{1}(\tau,s,\psit))^{\frac{p-2}{p}}(F_{2}(\tau,s,\psit))^{\frac{1}{p}}ds\\
&\quad \le 2^{\frac{p-2}{2p}}\alpha^{\frac{1}{2}}_{2}\int_{0}^{t}e^{C(\tau)\tau\frac{p-2}{2p}}e^{C(\tau)\tau\frac{1}{p}}\left(\frac{\tau-s}{\tau}\right)^{\frac{p-1}{p}C(\tau)}\left(\frac{\tau}{\tau-s}\right)^{C(\tau)\frac{p-2}{2p}}\\
&\qquad\quad\cdot\left(\frac{\tau}{\tau-s}\right)^{C(\tau)\frac{1}{p}}s^{\frac{1}{2}}ds\\
&\quad \le 2^{\frac{p-2}{2p}}\alpha^{\frac{1}{2}}_{2}t^{\frac{1}{2}}e^{\frac{C(\tau)\tau}{2}}\int_{0}^{t}\left(\frac{\tau-s}{\tau}\right)^{\frac{p-1}{p}C(\tau)}\left(\frac{\tau-s}{\tau}\right)^{-C(\tau)\frac{p-2}{2p}}\\
&\qquad\quad\cdot\left(\frac{\tau-s}{\tau}\right)^{-C(\tau)\frac{1}{p}}ds\\
&\quad \le 2^{\frac{p-2}{2p}}\alpha^{\frac{1}{2}}_{2}\tau^{\frac{1}{2}}e^{\frac{C(\tau)\tau}{2}}\int_{0}^{t}\left(\frac{\tau-s}{\tau}\right)^{\frac{p-2}{2p}C(\tau)}ds\\
&\quad\le 2^{\frac{p-2}{2p}}\alpha^{\frac{1}{2}}_{2}\tau^{\frac{1}{2}}e^{\frac{C(\tau)\tau}{2}}\int_{0}^{t} ds\le 2^{\frac{p-2}{2p}}\alpha^{\frac{1}{2}}_{2}\tau^{\frac{3}{2}}e^{\frac{C(\tau)\tau}{2}},
\end{align*}
where $\alpha_{2}$ is a positive constant independent on $\eps$ such that
\begin{equation*}
\norm{\psit(s,\cdot)}^2_{H^1(\R)}\le \alpha_{2}.
\end{equation*}
Hence,
\begin{align*}
\norm{\px \we(t,\cdot)}_{L^{p}(\R)}\le&\alpha_{1} \left(\frac{\tau}{\tau-t}\right)^{C(\tau)\frac{p-1}{p}}e^{\frac{p-1}{p}C(\tau)\tau}\tau\\
&+\gamma2^{\frac{p-2}{2p}}\alpha^{\frac{1}{2}}_{2}\tau^{\frac{3}{2}}\left(\frac{\tau}{\tau-t}\right)^{C(\tau)\frac{p-1}{p}}e^{\frac{p-1}{p}C(\tau)\tau}e^{\frac{C(\tau)\tau}{2}}\\
\le&\left(\alpha_{1}+\gamma\alpha_{2}^{\frac{1}{2}}2^{\frac{p-2}{2p}}\tau^{\frac{1}{2}}e^{\frac{C(\tau)\tau}{2}}\right) \tau\left(\frac{\tau}{\tau-t}\right)^{C(\tau)\frac{p-1}{p}}e^{\frac{p-1}{p}C(\tau)\tau}.
\end{align*}
Sending $p\to\infty$, we have
\begin{align*}
\norm{\px \we(t,\cdot)}_{L^{p}(\R)}\le& \left(\alpha_{1}+2\gamma\alpha_{2}^{\frac{1}{2}}\tau^{\frac{1}{2}}e^{\frac{C(\tau)\tau}{2}}\right)\tau\left(\frac{\tau}{\tau-t}\right)^{C(\tau)}e^{C(\tau)\tau}\\
\le& C(\tau)\tau\left(\frac{\tau}{\tau-t}\right)^{C(\tau)}e^{C(\tau)\tau},
\end{align*}
which gives \eqref{eq:pxv}.
\end{proof}
Coming back to the terminal value problem, the previous results for the initial value problem translate into the following ones for \eqref{eq:test-funct-10}:
\begin{corollary}\label{col:1}
Let $\psi_{\eps}\in C^{\infty}((0,\infty)\times\R)\cap C((0,\infty);H^{2}(\R))\cap L^{\infty}((0,\infty);H^{2}(\R))$ be a function satisfying \eqref{eq:con-psi}.
Then for each $\eps>0$ and $t\in (0,\tau)$
\begin{equation}
\begin{split}
\label{eq:v54}
\norm{\phi_{\eps}(t,\cdot)}^2_{H^1(\R)}&+2\eps \int_{t}^{\tau}\norm{\px \phi_{\eps}(s,\cdot)}^2_{H^1(\R)}ds\\\le &e^{C(\tau)\tau}\left(\frac{\tau}{t}\right)^{C(\tau)}\int_{t}^{\tau}\norm{\psi_{\eps}(s,\cdot)}^2_{H^1(\R)}ds,
\end{split}
\end{equation}
\begin{align}
\label{eq:v55}
\norm{\phi_{\eps}(t,\cdot)}_{L^{\infty}(\R)}&\le\sqrt{2}\left( e^{C(\tau)\tau}\left(\frac{\tau}{t}\right)^{C(\tau)}\int_{t}^{\tau}\norm{\psi_{\eps}(s,\cdot)}^2_{H^1(\R)}ds\right)^{\frac{1}{2}},\\
\label{eq:v56}
\norm{\px \phi_{\eps}(t,\cdot)}_{L^{\infty}(\R)}&\le C(\tau)\tau\left(\frac{\tau}{t}\right)^{C(\tau)}e^{C(\tau)\tau}.
\end{align}
\end{corollary}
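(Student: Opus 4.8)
The plan is to deduce the three inequalities \eqref{eq:v54}, \eqref{eq:v55} and \eqref{eq:v56} \emph{directly} from the corresponding bounds \eqref{eq:v-213}, \eqref{eq:v-infty} and \eqref{eq:pxv} already proved for the initial value problem \eqref{eq:equ-v}, by undoing the time reversal introduced in \eqref{eq:v-q}--\eqref{eq:beta-psi}. Recall that, by construction, $\we(t,x)=\phi_\eps(\tau-t,x)$, $Q_\eps(t,x)=\Phi_\eps(\tau-t,x)$, $\betas(t,x)=\be(\tau-t,x)$ and $\psit(t,x)=-\psi_\eps(\tau-t,x)$, and that this substitution is exactly what converts the terminal value problem \eqref{eq:test-funct-10} into the forward problem \eqref{eq:equ-v}. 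Hence no new differential inequality must be derived; it suffices to rewrite the $\we$-estimates in terms of $\phi_\eps$ and track how the reversal acts on each term.

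First I would fix $t\in(0,\tau)$ and evaluate \eqref{eq:v-213} at the reversed time $\tau-t$. On the left-hand side $\norm{\we(\tau-t,\cdot)}^2_{H^1(\R)}=\norm{\phi_\eps(t,\cdot)}^2_{H^1(\R)}$, while in the dissipation integral the change of variable $s\mapsto\tau-s$, together with $\px\we(s,\cdot)=\px\phi_\eps(\tau-s,\cdot)$, transforms $\int_0^{\tau-t}\norm{\px\we(s,\cdot)}^2_{H^1(\R)}\,ds$ into $\int_t^{\tau}\norm{\px\phi_\eps(s,\cdot)}^2_{H^1(\R)}\,ds$. On the right-hand side the singular weight $(\tau/(\tau-t))^{C(\tau)}$ becomes $(\tau/t)^{C(\tau)}$, and the same substitution, using $\norm{\psit(s,\cdot)}_{H^1(\R)}=\norm{\psi_\eps(\tau-s,\cdot)}_{H^1(\R)}$, turns $\int_0^{\tau-t}\norm{\psit(s,\cdot)}^2_{H^1(\R)}\,ds$ into $\int_t^{\tau}\norm{\psi_\eps(s,\cdot)}^2_{H^1(\R)}\,ds$. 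This is precisely \eqref{eq:v54}.

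Next I would apply the identical reversal to \eqref{eq:v-infty} and \eqref{eq:pxv}. For \eqref{eq:v-infty}, evaluating at $\tau-t$ and performing the single change of variable in the running integral immediately yields the $L^\infty$ bound \eqref{eq:v55}; for \eqref{eq:pxv}, whose right-hand side contains no time integral, one merely replaces $(\tau/(\tau-t))^{C(\tau)}$ by $(\tau/t)^{C(\tau)}$ to obtain the gradient bound \eqref{eq:v56}. In both cases the constant $C(\tau)$ is unchanged.

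The only point that requires genuine care — and hence the ``main obstacle'' such as it is — is the bookkeeping of the reversal: one must verify that the weight $\tau-t$ is sent to $t$, so that the blow-up of the estimates now occurs as $t\downarrow0$ (the terminal time of \eqref{eq:test-funct-10} read backwards); that the integration limits flip from $(0,\tau-t)$ to $(t,\tau)$; and that the $H^1$ norms of $\psit$ and $\psi_\eps$ agree under the reversal, so that the forcing term is preserved. These are all routine substitutions, after which \eqref{eq:v54}, \eqref{eq:v55} and \eqref{eq:v56} follow at once.
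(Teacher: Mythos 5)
Your proposal is correct and is exactly the argument the paper intends: the paper states the corollary with the remark that the estimates \eqref{eq:v-213}, \eqref{eq:v-infty} and \eqref{eq:pxv} for the initial value problem ``translate'' to \eqref{eq:test-funct-10}, which is precisely your time-reversal $t\mapsto\tau-t$ with the substitutions \eqref{eq:v-q}--\eqref{eq:beta-psi}, the change of variable $s\mapsto\tau-s$ in the integrals, and the observation that $\norm{\psit(s,\cdot)}_{H^1(\R)}=\norm{\psi_{\eps}(\tau-s,\cdot)}_{H^1(\R)}$. Your bookkeeping (the weight $\tau-t$ becoming $t$, the limits flipping to $(t,\tau)$, and $C(\tau)$ unchanged) matches the stated bounds \eqref{eq:v54}--\eqref{eq:v56} exactly.
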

Although we will not use this fact directly, an interesting consequence of the previous estimates is the existence of a solution of \eqref{eq:test-funct-1}
\begin{theorem}
Fix any $0<\delta<\tau$. Then there exists at least one distributional solution $(\phi,\Phi)\in L^{\infty}((\delta,\tau);W^{1,\infty}(\R)\cap H^{1}(\R))\times L^{\infty}((\delta,\tau);W^{2,\infty}(\R)\cap H^{2}(\R))$ to the terminal value problem \eqref{eq:test-funct-1}.
\end{theorem}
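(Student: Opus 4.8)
The plan is to obtain $(\phi,\Phi)$ as a weak limit of the regularized pairs $(\phi_{\eps},\Phi_{\eps})$ solving \eqref{eq:test-funct-10}, whose existence is guaranteed by Lemma \ref{lm:dual-proble}, exploiting the uniform-in-$\eps$ bounds of Corollary \ref{col:1}. Fix $0<\delta<\tau$. On the interval $(\delta,\tau)$ the factors $(\tau/t)^{C(\tau)}$ appearing in \eqref{eq:v54}, \eqref{eq:v55} and \eqref{eq:v56} are bounded by $(\tau/\delta)^{C(\tau)}$, so those estimates show that $\{\phi_{\eps}\}_{\eps>0}$ is bounded in $L^{\infty}((\delta,\tau);H^{1}(\R))$ and that both $\{\phi_{\eps}\}$ and $\{\px\phi_{\eps}\}$ are bounded in $L^{\infty}((\delta,\tau);L^{\infty}(\R))$; hence $\{\phi_{\eps}\}$ is bounded, uniformly in $\eps$, in $L^{\infty}((\delta,\tau);W^{1,\infty}(\R)\cap H^{1}(\R))$.

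For the second component I would use the relation $\px\Phi_{\eps}=-\phi_{\eps}$ from \eqref{eq:test-funct-10}, which transfers the previous bounds to $\px\Phi_{\eps}=-\phi_{\eps}$ and $\pxx\Phi_{\eps}=-\px\phi_{\eps}$, thereby controlling $\Phi_{\eps}$ in $W^{2,\infty}$ and $H^{2}$ modulo a bound on $\Phi_{\eps}$ itself in $L^{\infty}\cap L^{2}$. The latter is obtained by an energy estimate on the $\Phi_{\eps}$-equation exactly as in Lemmas \ref{lm:P-infty} and \ref{lm:p-infty-1}, using the cancellation employed in \eqref{eq:v-media-nulla}. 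This yields that $\{\Phi_{\eps}\}$ is bounded, uniformly in $\eps$, in $L^{\infty}((\delta,\tau);W^{2,\infty}(\R)\cap H^{2}(\R))$. By the Banach--Alaoglu theorem I then extract a subsequence $\eps_{k}\to 0$ with $\phi_{\eps_{k}}\weakstar\phi$ and $\Phi_{\eps_{k}}\weakstar\Phi$ in the corresponding weak-$\star$ topologies, $\phi$ and $\Phi$ lying in the asserted spaces.

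It remains to pass to the limit in the weak formulation of \eqref{eq:test-funct-10}. The viscous term is harmless: for a test function $\chi$ one writes $\eps_{k}\int\!\!\int\pxx\phi_{\eps_{k}}\,\chi=\eps_{k}\int\!\!\int\phi_{\eps_{k}}\,\pxx\chi\to 0$ by the uniform $L^{\infty}$ bound, while the source terms converge since $\psi_{\eps_{k}}\to\psi$ and $\Phi_{\eps_{k}}\weakstar\Phi$, and the identity $\px\Phi=-\phi$ passes to the distributional limit. The delicate term is the transport product $\be\px\phi_{\eps_{k}}$, because the coefficient $b$ is merely bounded with a one-sided derivative bound \eqref{eq:b2}--\eqref{eq:b3}, whereas $\px\phi_{\eps_{k}}$ converges only weakly. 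I would handle it by combining the strong convergence $\be\to b$ in $L^{2}$ from \eqref{eq:b1} with the uniform $L^{\infty}$ bound on $\px\phi_{\eps_{k}}$: for $\chi\in C_{c}^{\infty}$ the function $\be\chi$ has fixed compact support and converges to $b\chi$ in $L^{1}$, so it pairs against the weak-$\star$ limit of $\px\phi_{\eps_{k}}$ and produces $\int\!\!\int b\,\px\phi\,\chi$. This is precisely where the main difficulty lies, and the uniform $W^{1,\infty}$ estimate \eqref{eq:v56} is exactly what makes the argument work. Finally, the terminal condition $\phi(\tau,\cdot)=0$ is inherited from $\phi_{\eps_{k}}(\tau,\cdot)=0$ by extracting a bound on $\pt\phi_{\eps_{k}}$ from the equation and invoking the Aubin--Lions lemma (or Arzel\`a--Ascoli, the spatial $W^{1,\infty}$ bound furnishing equicontinuity in $x$) to upgrade the convergence enough to evaluate the trace at $t=\tau$. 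Collecting these facts shows that $(\phi,\Phi)$ is a distributional solution of \eqref{eq:test-funct-1} in the required class.
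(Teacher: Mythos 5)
Your proposal is correct and follows essentially the same route as the paper: uniform bounds from Corollary \ref{col:1} on $(\delta,\tau)$, weak compactness, and passage to the limit in which the only delicate term is $\be\px\phi_{\eps_{k}}$, which you resolve by pairing the strong $L^{2}$ convergence \eqref{eq:b1} of $\be$ against the weak limit of $\px\phi_{\eps_{k}}$ --- the same mechanism as the paper's splitting \eqref{eq:v202}--\eqref{eq:205}. You additionally supply two details the paper leaves implicit, namely an energy estimate bounding $\Phi_{\eps}$ itself in $L^{\infty}\cap L^{2}$ (the paper asserts \eqref{eq:v201} from the relation $\px\Phi_{\eps}=-\phi_{\eps}$ and Corollary \ref{col:1}, which only control the derivatives of $\Phi_{\eps}$) and the recovery of the terminal condition at $t=\tau$ via a bound on $\pt\phi_{\eps}$, both of which are sound.
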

\begin{proof}
For each fixed $\eps>0$, let $(\phi,\Phi)$ denote the solution of \eqref{eq:test-funct-10}. Due to the second equation of \eqref{eq:test-funct-10} and Corollary \ref{col:1},
\begin{equation}
\label{eq:v201}
\begin{split}
&\{\phi_{\eps}\}_{\eps>0} \quad \textrm{is bounded in $L^{\infty}((\delta,\tau);W^{1,\infty}(\R)\cap H^{1}(\R))$}, \quad \textrm{for $\delta\in (0,\tau)$},\\
&\{\Phi_{\eps}\}_{\eps>0} \quad \textrm{is bounded in $L^{\infty}((\delta,\tau);W^{2,\infty}(\R)\cap H^{2}(\R))$}, \quad \textrm{for $\delta\in (0,\tau)$}.
\end{split}
\end{equation}
Then, there exist
\begin{equation*}
\phi\in L^{\infty}((\delta,\tau);W^{1,\infty}(\R)\cap H^{1}(\R)), \quad \Phi\in L^{\infty}((\delta,\tau);W^{2,\infty}(\R)\cap H^{2}(\R)), \quad 0<\delta<\tau,   \end{equation*}
and $\{\eps_{k}\}_{k\in\N}$, $\eps_{k}\to 0$, such that
\begin{equation}
\label{eq:v207}
\begin{split}
&\phi_{\eps_{k}}\rightharpoonup\phi \quad \textrm{weakly in $L^{p}((\delta,\tau);W^{1,q}(\R))$}, \quad  \textrm{for $\delta\in (0,\tau),\, 1\le p <\infty, \ 2\le q <\infty$,}\\
&\Phi_{\eps_{k}}\rightharpoonup\Phi \quad \textrm{weakly in $L^{p}((\delta,\tau);W^{2,q}(\R))$}, \quad  \textrm{for $\delta\in (0,\tau),\, 1\le p <\infty,\, 2\le q <\infty$.}
\end{split}
\end{equation}
It remains to verify that the limit pair $(\phi,\Phi)$ is a solution of \eqref{eq:test-funct-1} in the sense of distributions. Fix any $\phi\in C^{\infty}_{c}((0,\tau)\times\R)$. We need to show that
\begin{equation}
\label{eq:v142}
\int_{0}^{\tau}\!\!\!\!\int_{\R}\phi b_{\eps_{k}}\px\phi_{\eps_{k}} dt dx \to \int_{0}^{\tau}\!\!\!\!\int_{\R}\phi b\px\phi dt dx.
\end{equation}
Observe that
\begin{equation}
\label{eq:v202}
\begin{split}
\int_{0}^{\tau}\!\!\!\!\int_{\R}\phi(b_{\eps_{k}}\px\phi_{\eps_{k}}-b\px\phi) dt dx=&\int_{0}^{\tau}\!\!\!\!\int_{\R}\phi\left(b_{\eps_{k}}-b\right)\px\phi_{\eps_{k}}dtdx\\
&+\int_{0}^{\tau}\!\!\!\!\int_{\R}\phi b\left(\px\phi_{\eps_{k}}-\px\phi\right)dtdx.
\end{split}
\end{equation}
Since $\phi$ has compact support in $(0,\tau)\times\R$, there exists $\delta>0$ such that
$\text{supp}(\phi)\subset (\delta,\tau)\times\R$. Therefore,
we can employ \eqref{eq:b1} and \eqref{eq:v201} to obtain
\begin{equation}
\label{eq:v203}
\begin{split}
&\int_{0}^{\tau}\!\!\!\!\int_{\R}\phi\left(b_{\eps_{k}}-b\right)\px\psi_{\eps_{k}}dtdx=\int_{\delta}^{\tau}\!\!\!\!\int_{\R}\phi\left(b_{\eps_{k}}-b\right)\px\psi_{\eps_{k}}dtdx\\
&\qquad \le \norm{b_{\eps_{k}}-b}_{L^{2}((0,\tau)\times\R)}\norm{\phi}_{L^\infty((0,\tau)\times\R)}\norm{\px\phi_{\eps_{k}}}_{L^{2}((\delta,\tau)\times\R)}\to 0.
\end{split}
\end{equation}
Since $\phi b\in L^2((0,\tau)\times\R)$ and $\text{supp}(\phi b)\subset (\delta,\tau)\times\R$, it follows from \eqref{eq:v207} that
\begin{equation}
\label{eq:205}
\int_{0}^{\tau}\!\!\!\!\int_{\R}\phi b\left(\px\phi_{\eps_{k}}-\px\phi\right)dtdx\to 0.
\end{equation}
\eqref{eq:v202}, \eqref{eq:v203} and \eqref{eq:205} give \eqref{eq:v142}, and the proof is completed.
\end{proof}

Now, we are ready for the proof of Theorem \ref{th:main}.
\begin{proof}[Proof of Theorem \ref{th:main}]
In Section \ref{sec:Es}, it has been proved the existence of an entropy solution of \eqref{eq:OHw-u}, or \eqref{eq:OHw}. Moreover, for  Theorem \ref{th:olei}, we have that $i)$ implies $ii)$.

Let us show that $ii)$ implies $i)$. It is sufficient to prove that there exists an unique weak solution of \eqref{eq:OHw-u}, or \eqref{eq:OHw}, that verifies
\eqref{eq:ole1}.
Let us suppose that $u$ and $v$ are two weak solution of \eqref{eq:OHw-u}, or \eqref{eq:OHw}. We have to prove that \eqref{eq:u-uguale-v-1} holds true.

We begin by fixed a test function $\psi \in C^{\infty}_{c}((0,\infty)\times\R)$. Let $0< \tau_{0} <\tau_{1}$ be such that
\begin{equation}
\label{eq:psi-1}
\supp(\psi)\subset (\tau_{0},\tau_{1})\times\R.
\end{equation}
From Lemma \ref{lm:dual-proble}, for each $\eps>0$ there exists a unique $\widetilde {\phi_{\eps}}\in  C^{\infty}((0,\infty)\times\R)\cap C((0,\infty);H^{\ell}(\R))$,  $\ell>2$, solving \eqref{eq:test-funct-10}. Let $\{\phi_{\eps}\}_{\eps}\subset C^{\infty}_{c}((0,\tau_{1})\times\R)$ be such that
\begin{equation}
\label{eq:supp-phi}
\eps\left\vert\supp(\phi_{\eps})\right\vert\to 0,
\end{equation}
\begin{equation}
\label{eq:v125}
\widetilde{\phi_{\eps}}-\phi_{\eps}\to 0 \quad \textrm{strongly in } \quad \begin{cases}
&L^{1}((0,\infty);W^{2,1}(\R))\cap W^{1,1}((0,\infty)\times\R)\\
&\quad \cap W^{1,\infty}((0,\infty);H^{1}(\R))\cap L^{\infty}((0,\infty);H^{\ell}(\R)),
\end{cases}
\end{equation}
with $\ell>2$, and define the family $\{\psi_{\eps}\}_{\eps}$ as follows
\begin{equation}
\label{eq:def-di-psi}
\psi_{\eps}=\pt\phi_{\eps}+\be\px\phi_{\eps}+\gamma\Phi_{\eps}+\eps\pxx\phi_{\eps}, \quad \eps>0.
\end{equation}
Clearly,
\begin{equation*}
\psi_{\eps}\in C^{\infty}((0,\infty)\times\R)\cap C((0,\infty);H^2(\R)) \quad \eps>0,
\end{equation*}
and, due to \eqref{eq:b1}, \eqref{eq:b2} and \eqref{eq:v125},
\begin{equation}
\label{eq:psi-12}
\psi_{\eps}\to \psi  \quad \textrm{strongly in} \quad L^{1}((0,\infty)\times\R)\cap L^{\infty}((0,\infty);H^{2}(\R)).
\end{equation}
In particular, $\phi_{\eps}$ and $\psi_{\eps}$ satisfy the two equations (see \eqref{eq:test-funct-10} and \eqref{eq:def-di-psi})
\begin{equation}
\label{eq:v126}
\pt\phi_{\eps}+\be\px\phi_{\eps}+\gamma\Phi_{\eps}=\psi_{\eps}-\eps\pxx\phi_{\eps}, \quad \px\Phi_{\eps}= -\phi_{\eps}.
\end{equation}
Hence, using \eqref{eq:psi-1} and \eqref{eq:v126},
\begin{equation}
\begin{split}
\label{eq:v127}
\int_{0}^{\infty}\!\!\!\int_{\R}\omega\psi dtdx=&\int_{\tau_{0}}^{\tau_{1}}\!\!\!\int_{\R} \omega\psi dtdx\\
=&\int_{\tau_{0}}^{\tau_{1}}\!\!\!\int_{\R} \omega\psi_{\eps} dtdx + \int_{\tau_{0}}^{\tau_{1}}\!\!\!\int_{\R} \omega(\psi -\psi_{\eps}) dtdx\\
=&\int_{\tau_{0}}^{\tau_{1}}\!\!\!\int_{\R} \omega\left(\pt\phi_{\eps}+\be\px\phi_{\eps}+\gamma\Phi_{\eps}+\eps\pxx\phi_{\eps}\right)dtdx\\
&+ \int_{\tau_{0}}^{\tau_{1}}\!\!\!\int_{\R} \omega(\psi -\psi_{\eps}) dtdx\\
=& \int_{\tau_{0}}^{\tau_{1}}\!\!\!\int_{\R} \omega\left(\pt\phi_{\eps}+b\px\phi_{\eps}+\gamma\Phi_{\eps}\right)dtdx\\
&+\eps\int_{\tau_{0}}^{\tau_{1}}\!\!\!\int_{\R} \omega\pxx\phi_{\eps}dtdx + \int_{\tau_{0}}^{\tau_{1}}\!\!\!\int_{\R} \omega\left(\be - b\right)\px\phi_{\eps} dtdx\\
&+\int_{\tau_{0}}^{\tau_{1}}\!\!\!\int_{\R} \omega(\psi -\psi_{\eps}) dtdx.
\end{split}
\end{equation}
Using the fact that $\phi_{\eps} \in C^{\infty}_{c}((0,\infty)\times\R)$ and \eqref{eq:test-fu-3}, we have that
\begin{equation}
\label{eq:v121}
\int_{\tau_{0}}^{\tau_{1}}\!\!\!\int_{\R} \omega\left(\pt\phi_{\eps}+b\px\phi_{\eps}+\gamma\Phi_{\eps}\right)dtdx=0.
\end{equation}
Employing \eqref{eq:v1230}, \eqref{eq:v54}, \eqref{eq:supp-phi} and the H\"older inequality
\begin{equation}
\label{eq:v215}
\begin{split}
\left\vert \eps \int_{\tau_{0}}^{\tau_{1}}\!\!\!\int_{\R}\omega\pxx\phi dtdx\right\vert \le& \eps \norm{\omega}_{L^{\infty}((\tau_{0},\tau_{1})\times\R)}\norm{\pxx\phi_{\eps}}_{L^{1}((\tau_{0},\tau_{1})\times\R)}\\
\le & \eps \norm{\omega}_{L^{\infty}((\tau_{0},\tau_{1})\times\R)}\sqrt{\left\vert\supp(\phi_{\eps})\right\vert} \norm{\pxx\phi_{\eps}}_{L^{2}((\tau_{0},\tau_{1})\times\R)}\\
\le & \left(\eps\frac{\vert\supp(\phi_{\eps})\vert}{2}\right)^{\frac{1}{2}}\norm{\omega}_{L^{\infty}((\tau_{0},\tau_{1})\times\R)}\\
&\cdot e^{\frac{C(\tau)\tau}{2}}\left(\frac{\tau_{1}}{\tau_{0}}\right)^{\frac{C(\tau)}{2}}\left(\int_{\tau_{0}}^{\tau_{1}}\norm{\psi_{\eps}(s,\cdot)}^2_{H^1(\R)}ds\right)^{\frac{1}{2}}\to 0.
\end{split}
\end{equation}
It follows from \eqref{eq:v1230}, \eqref{eq:b1}, \eqref{eq:v54} and the H\"older inequality that
\begin{equation}
\label{eq:v216}
\begin{split}
&\left\vert \int_{\tau_{0}}^{\tau_{1}}\!\!\!\int_{\R}\omega\left(b - \be\right)\px\phi_{\eps}dtdx\right\vert\\
&\quad \le\norm{\omega}_{L^{\infty}((\tau_{0},\tau_{1})\times\R)}\norm{b-\be}_{L^{2}((\tau_{1},\tau_{0})\times\R)}\norm{\phi_{\eps}}_{L^{2}((\tau_{1},\tau_{0})\times\R)}\\
&\quad \le \norm{\omega}_{L^{\infty}((\tau_{0},\tau_{1})\times\R)}\sqrt{\tau_{1}-\tau_{0}}\norm{b-\be}_{L^{2}((\tau_{1},\tau_{0})\times\R)}\\
&\qquad \cdot\left(\eps\frac{\vert\supp(\phi_{\eps})\vert}{2}\right)^{\frac{1}{2}}\norm{\omega}_{L^{\infty}((\tau_{0},\tau_{1})\times\R)}e^{\frac{C(\tau)\tau}{2}}\\
&\qquad\cdot\left(\frac{\tau_{1}}{\tau_{0}}\right)^{\frac{C(\tau)}{2}}\left(\int_{\tau_{0}}^{\tau_{1}}\norm{\psi_{\eps}(s,\cdot)}^2_{H^1(\R)}ds\right)^{\frac{1}{2}}\to 0.
\end{split}
\end{equation}
Due to \eqref{eq:v1230} and \eqref{eq:psi-12}, we get
\begin{equation}
\label{eq:v255}
\left\vert \int_{\tau_{0}}^{\tau_{1}}\!\!\!\int_{\R}\omega\left(\psi-\psi_{\eps}\right)dtdx\right\vert \le \norm{\omega}_{L^{\infty}((\tau_{0},\tau_{1})\times\R)}\norm{\psi-\psi_{\eps}}_{L^{1}((0,\infty)\times\R)}\to 0.
\end{equation}
Summarizing, using \eqref{eq:v121}, \eqref{eq:v215}, \eqref{eq:v216} and \eqref{eq:v255} in \eqref{eq:v127}  yields
\begin{equation*}
\int_{\tau_{0}}^{\tau_{1}}\!\!\!\int_{\R}\omega\psi dtdx =0.
\end{equation*}
Due to the freedom in the choice of $\psi$, this implies \eqref{eq:u-uguale-v-1}, and the proof is completed.
\end{proof}

\end{document}